\newtheorem{theorem}{Theorem}
\newtheorem{corollary}[theorem]{Corollary}
\newtheorem{lemma}[theorem]{Lemma}
\newtheorem*{theorem*}{Theorem}
\newtheorem*{lemma*}{Lemma}
\newtheorem{definition}[theorem]{Definition}
\newtheorem*{definition*}{Definition}
\theoremstyle{definition}
\newtheorem*{remark*}{Remark}
\newtheorem{remark}[theorem]{Remark}
\newtheorem*{remarks*}{Remarks}
\newcommand{\U}{\Upsilon}
\newcommand{\Om}{\Omega}
\newcommand{\om}{\omega}
\newcommand{\ol}{\overline}
\renewcommand{\forall}{\text{ for all }}
\def\NN{\mathbb N}
\newcommand{\wk}{\rightharpoonup}
\renewcommand{\leq}{\leqslant}
\renewcommand{\geq}{\geqslant}
\renewcommand{\mod}{~\text{mod}~}
\def\RR{\mathbb R}
\def\l{\left}
\newcommand{\e}{\epsilon}
\newcommand{\sF}{\mathcal F}
\newcommand{\sE}{\mathcal{E}}
\renewcommand{\a}{\alpha}
\renewcommand{\d}{\delta}
\newcommand{\half}{{\frac{1}{2}}}
\newcommand{\sR}{\mathcal{R}}
\renewcommand{\l}{\lambda}
\newcommand{\CC}{\mathbb{C}}
\newcommand{\sN}{\mathcal N}
\newcommand{\nc}{\newcommand}
\newcommand{\Lid}{L_\infty^*(X, \sL, \l)}
\newcommand{\Li}{L_\infty (X, \sL, \l)}
\newcommand{\Fk}{\mathfrak F}
\newcommand{\mG}{\mathfrak G}
\nc{\RLp}{\mbox{$L_{p}(\Gamma)\;$}}
\nc{\RLh}{\mbox{$L_{2}(\Gamma)\;$}} \nc{\sL}{\mathcal L}
\nc{\RLpr}{\mbox{$L_{p}(\Gamma,\rho)\;$}}
\nc{\RLhr}{\mbox{$L_{2}(\Gamma,\rho)\;$}} \nc{\ds}{\displaystyle}
\def\sin{\mathop{\rm sin}\nolimits}
\def\cos{\mathop{\rm cos}\nolimits}
\def\inf{\mathop{\rm inf}\nolimits}
\def\sup{\mathop{\rm sup}\nolimits}
\nc{\la}{\label} \nc{\sB}{\mathcal B}
\nc{\mS}{\mathscr S}
\numberwithin{equation}{section}
\numberwithin{theorem}{section}
\newcommand{\ed}{\end{document}}
\begin{document}

\title{Localizing  Weak Convergence in $\boldsymbol{ L_\infty}$}
\author{J. F. Toland\thanks{Department of Mathematical Sciences, University of Bath, Claverton Down, Bath, BA2 7AY, UK.\newline \tt{masjft@bath.ac.uk}}}
\maketitle 
\begin{abstract}
In  a general measure space $(X,\mathcal L,\lambda)$, a characterization of weakly null sequences  in $L_\infty (X,\mathcal L,\lambda)$ ($u_k \rightharpoonup  0$) in terms of their
pointwise behaviour almost everywhere is derived  from the Yosida-Hewitt identification of $L_\infty (X,\mathcal L,\lambda)^*$ with finitely additive measures,
and  extreme points of the unit ball in $L_\infty (X,\mathcal L,\lambda)^*$ with  $\pm \mathfrak G$,  where $\mathfrak G$ denotes the set of finitely additive measures that take only values 0 or $ 1$.
When  $(X,\tau)$ is  a locally compact Hausdorff  space with Borel $\sigma$-algebra $\mathcal B$, the well-known identification of $\mathfrak G$ with ultrafilters means that
this criterion for nullity is  equivalent to localized behaviour on   open neighbourhoods of   points $x_0$ in the one-point compactification of $X$. Notions of weak convergence at $x_0$ and the essential range of $u$ at
$x_0$ are natural consequences.
When  a  finitely additive measure $\nu$  represents
$f \in L_\infty(X, \mathcal B, \lambda)^*$    and $\hat \nu$  is the Borel measure representing  $f$ restricted  to $C_0(X,\tau)$, a minimax formula for $\hat \nu$ in terms $\nu$ is derived and
those  $\nu$ for which $\hat \nu$ is singular with respect to $\lambda$ are characterized.
\end{abstract}

\emph{Keywords:}
Dual of $L_\infty$; finitely additive measures; weak convergence; ultrafilters; pointwise \\convergence;  extreme points

\emph{AMS Subject Classification:}
46E30, 28C15, 46T99, 26A39, 28A25, 	46B04

\section{Introduction}

In  the usual Banach space $C(Z)$  of real-valued continuous functions on a compact metric  space $Z$ with the maximum norm, it is well-known \cite{banach} that $v_k$ converges weakly to $v$ ($v_k \wk v$)  if and only if  $\{\|v_k\|\}$ is bounded and $v_k(z) \to v(z)$ for all $z \in Z$.
This observation amounts to a simple test for weak convergence in $C(Z)$ from which follows, for example, the  weak sequential continuity \cite{ball} of  composition maps $u \mapsto f\circ u,~u \in C(Z)$, when $f: \RR \to \RR$ is continuous. However $u_k \wk u$ in $\Li$ implies  that $\{\|u_k\|_\infty\}$ is bounded and  often that $u_k(x) \to u(x)$ almost everywhere (Lemma \ref{ap1}),  but the converse is false (Remark \ref{app3}) and, despite the identification of $\Li$ with $C(Z)$ for some compact $Z$ \cite[VIII 2.1]{conway},  it can be difficult to decide whether or not a given sequence is weakly convergent in $\Li$.
  To address this issue Theorem \ref{thmIFF}  characterises sequences that are weakly convergent to 0 in $\Li$ (hereafter referred to as weakly null) purely in terms of their pointwise behaviour almost everywhere, and
a practical test for weak nullity ensues (Corollary \ref{cor3.6} and Section \ref{S3.1}).
  When   $(X,\tau)$ is a locally compact Hausdorff  topological space,
  localization in terms of opens sets, as opposed to pointwise, follows from the identification of ultrafilters in  the corresponding Borel measure space $(X,\sB,\l)$ with extreme points in the unit ball of $L_\infty(X,\sB,\l)^*$.
   When  $\nu$ is the finitely additive measure corresponding to  $f \in L_\infty(X,\sB,\l)^*$ we give a formula for the
   Borel measure $\hat\nu$  that   represent  the restriction $\hat f$ of $f$  to $C_0(X,\tau)$,  defined in Section \ref{restriction}, and use it to characterize those $\nu$ for which $\hat \nu$ is singular relative to $\l$.
   These observations are motivated by examples  \cite{ hensgen,valadier} of  singular finitely additive measures that do not yield singular Borel measures when restricted to continuous functions,
contrary to a claim by Yosida \& Hewitt \cite[Thm. 3.4]{yosidaetal}. The material is organized as follows.

Section \ref{Linfty*} is a brief survey of finitely additive measures on $\sigma$-algebras and of weak convergence in $\Li$ in terms of the Yosida-Hewitt  representation   of the dual space $\Li^*$ as a space $\Lid$ of finitely additive measures. When $\mG$ denotes elements of $\Lid$   that take only values $\{0,1\}$, it follows  that $u_k \wk u$ in $\Li$ if and only if $f(u_k) \to f(u)$ for all  $f$ represented by elements of $\mG$. Although obtained independently, this is a special case of Rainwater's Theorem, see Appendix and the Closing Remarks at the end of the paper.
The section ends with a brief account of weak sequential continuity of composition operators.

Section \ref{wkcgce} begins by remarking that $u_k \wk u$ if and only if $|u_k| \wk |u|$, noting aspects of the pointwise behaviour of weakly convergent sequences in $\Li$, and observing that a necessary condition, which turns out to be sufficient, is given by Mazur's theorem.
The characterization of null sequences in terms of their pointwise behaviour  in Theorem \ref{thmIFF} follows   from
 Yosida-Hewitt  theory and the fact that any $u \in \Li$ is a constant $\om$-almost everywhere in the sense of finitely additive measures when $\om \in \mG$ (see Remark following Theorem \ref{ans}).
An $\Li$ analogue of Dini's theorem on the uniform convergence of sequences of continuous functions that are monotonically convergent pointwise is a corollary, and Theorem \ref{thmIFF} is illustrated by several examples.

 In
Section \ref{topsection}, when $(X,\tau)$ is a locally compact Hausdorff space and  $(X, \sB,\l)$ is the corresponding Borel measure space, the well-known
 one-to-one correspondence  \eqref{11ultra}  between  $\mG$  and   a set $\Fk$ of ultrafilters (Definition \ref{ultrafilter})
leads to a
   local description of   weak convergence: a sequence  is  weakly convergent in $L_\infty(X,\sB,\l)$ if and only if it is weakly convergent at each  $x_0\in X_\infty$, the one-point compactification of $X$.
This notion of weak convergence at a point leads naturally to a definition
of the essential range $\sR(u)(x_0)$  of $u$ at $x_0 \in X_\infty$. For the relation between weak convergence and the pointwise essential range, see Remark \ref{necessuf}.

For $\nu \in L^*_\infty(X,\sB,\l)$,  let $\hat \nu$ denote the Borel measure that, by the Riesz Representation Theorem  \cite[Thm. 6.19]{rudin}), corresponds to the restriction to $C_0(X,\tau)$ of the  functional defined on $\nu$ on $L_\infty(X, \sB,\l)$ by \eqref{Le}. Section \ref{restriction} develops a  minimax formula (Theorem \ref{measures})  for  $\hat \nu$  in terms of $\nu$. It follows that if $(X,\tau)$ is not compact,  $\hat \nu$ may be zero when $\nu\geq 0$ is non-zero. In particular when $\om \in \mG$,  either $\hat \om = 0$ or $\hat\om \in \mathfrak D$ (a Dirac measure on $X$) and  if $(X, \tau)$ is compact $\hat \om \in \mathfrak D$. An arbitrary Hahn-Banach extension to $L_\infty(X,\sB, \l)$  of a $\d$-function on $C_0(X,\tau)$  need not be in $\mG$, but from Section \ref{localisation} there may be infinitely many extensions that are in $\mG$.
 Those $\nu$ for which $\hat\nu$ is singular with respect to $\l$ are characterised in Corollary \ref{singular1}.

\section{$\boldsymbol L_{\boldsymbol\infty}$ \bf and its Dual}\label{Linfty*}
Let $\l$ be a  non-negative, complete, countably additive measure  on  a $\sigma$-algebra $\sL$ in a set $X$ and let
$\sN = \{E \in \sL:\l(E) = 0\}$. So $(X,\sL, \l)$ is a measure space  and $\sN$ denotes its null sets.
As usual $(L_\infty(X, \sL, \l), \|\cdot\|_\infty)$ denotes  the corresponding Banach space of (equivalence classes of) essentially bounded functions. In  notation  summarised in Section \ref{fams}, the analogue of the Riesz Representation Theorem  \cite[Thm. 6.19]{rudin} for functionals in  $L_\infty(X,\sL,\l)^*$  is the following.

\begin{theorem}\textnormal{(Yosida \& Hewitt \cite{yosidaetal}, see also \cite[Theorem IV.8.16]{dunford})}. \label{yhthm} For every bounded linear functional on $L_\infty(X, \sL, \l)$ there exists a finitely additive measure (Definition \ref{llperp}) $\nu$ on $\sL$  such that
\begin{gather}\label{Le} f(w) = \int_X w \,d\nu \forall w \in L_\infty(X, \sL, \l),\\ \nu(N) = 0 ~\forall N \in \sN \text{ and } |\nu|(X) = \|f\|_\infty < \infty.\notag
\end{gather}
Conversely if $\nu$ is a finitely additive measure on $X$ with $\nu(N) =0$ for all $N \in \sN$, then $f$ defined by \eqref{Le} is in   $L_\infty(X, \sL, \l)^*$. We write $\nu \in \Lid$ if \eqref{Le} holds for some $f\in\Li^*$. \qed
\end{theorem}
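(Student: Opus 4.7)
The plan is to pass between bounded linear functionals and finitely additive measures via the natural pairing $E \mapsto f(\chi_E)$, where $\chi_E$ denotes the equivalence class in $\Li$ of the characteristic function of $E \in \sL$.  Starting from $f \in \Li^*$, I would set $\nu(E) := f(\chi_E)$.  Finite additivity is immediate from linearity of $f$ and the identity $\chi_{E_1 \cup E_2} = \chi_{E_1} + \chi_{E_2}$ for disjoint $E_1, E_2 \in \sL$; vanishing of $\nu$ on $\sN$ follows because $\chi_N$ is the zero element of $\Li$ whenever $N \in \sN$, which also ensures $\nu$ is well defined on $\l$-equivalence classes.

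The next step is to promote the identity $f(s) = \int_X s\, d\nu$, which holds tautologically for simple functions $s = \sum_i a_i \chi_{E_i}$ by linearity, to arbitrary $w \in \Li$.  The tool is uniform approximation: since $|w| \leq \|w\|_\infty$ almost everywhere, one partitions $[-\|w\|_\infty,\|w\|_\infty]$ into finitely many intervals of diameter at most $\varepsilon$ and forms a simple $s_\varepsilon$ with $\|w - s_\varepsilon\|_\infty \leq \varepsilon$.  Continuity of $f$ on $\Li$, together with the uniform bound $|\int s\, d\nu| \leq |\nu|(X)\,\|s\|_\infty$ for simple $s$, yields \eqref{Le} by passage to the limit.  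The norm identity $|\nu|(X) = \|f\|_\infty$ then follows from two inequalities: \eqref{Le} immediately gives $\|f\|_\infty \leq |\nu|(X)$, and testing $f$ on $w = \sum_i \varepsilon_i \chi_{E_i}$ with $\varepsilon_i = \sgn \nu(E_i)$ and taking the supremum over finite partitions of $X$ yields the reverse estimate $|\nu|(X) \leq \|f\|_\infty$.

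For the converse, given a finitely additive $\nu$ on $\sL$ vanishing on $\sN$, I would define $f(w) = \int_X w\, d\nu$ by the same two-step recipe: directly on simple functions via $\int \sum_i a_i \chi_{E_i}\, d\nu := \sum_i a_i \nu(E_i)$, then extended by uniform continuity to all of $\Li$.  The hypothesis $\nu(N) = 0$ for $N \in \sN$ ensures the resulting $f$ is well defined on $\l$-equivalence classes, and the estimate $|f(w)| \leq |\nu|(X)\,\|w\|_\infty$ inherited from the simple-function bound gives $f \in \Li^*$.

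The main obstacle is establishing a usable integration theory against a \emph{merely} finitely additive measure, since the standard tools (monotone and dominated convergence) are not available.  One is forced to build the integral purely by uniform approximation on simple functions; happily, the defining uniform bound on elements of $\Li$ is exactly what makes this closure argument go through.  All remaining verifications -- finite additivity of $\nu$, the norm identity, and linearity of $f$ -- reduce to linearity of $f$ or finite additivity of $\nu$ applied to indicator functions.
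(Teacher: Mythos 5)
The paper offers no proof of Theorem \ref{yhthm} --- it is quoted from Yosida \& Hewitt \cite{yosidaetal} and \cite[Theorem IV.8.16]{dunford} --- so the only comparison available is with the standard argument in those sources, which your proposal reproduces correctly: set $\nu(E)=f(\chi_E)$, extend to all of $\Li$ by uniform approximation with simple functions, and obtain the two-sided estimate giving $|\nu|(X)=\|f\|_\infty$. The one step you should make explicit is the bound $\sup_{A\in\sL}|\nu(A)|\leq\|f\|_\infty$, which is what qualifies $\nu$ as a finitely additive measure in the sense of Definition \ref{llperp} and (together with the elementary fact that a bounded finitely additive set function has finite total variation, so that $|\nu|(X)<\infty$) is what makes the uniform-limit definition of $\int_X w\,d\nu$ well posed in the converse direction.
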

Because $\nu$ is finitely additive, but not necessarily  $\sigma$-additive, integrals in \eqref{Le} should be treated with care. For example, the Monotone Convergence Theorem and Fatou's Lemma do not hold, and the Dominated Convergence Theorem holds only in a restricted form. The next section is a  review  of notation and standard theory; for a comprehensive account see \cite{yosidaetal}, \cite[Ch. III]{dunford} or \cite[Ch. 4]{raorao}.
When combined with the Hahn-Banach theorem, Theorem \ref{yhthm} yields the existence of a variety of finitely additive measures.
\subsection{Finitely Additive Measures: Notation and Definitions}\label{fams}
Although finitely additive measures are defined on algebras (closed under complementation and finite unions), here they are considered only on  $\sigma$-algebras, where their theory is somewhat more satisfactory, because $\sL$   in Theorem \ref{yhthm}  is a $\sigma$-algebra.
\begin{definition}\label{llperp}\textnormal{\cite[\S1.2-\S1.7]{yosidaetal}} A finitely additive measure  $\nu$ on $\sL$ is a mapping from $\sL$ into  $\RR$ with
\begin{gather*}\nu (\emptyset) =0 \text{ and }\sup_{A \in \sL} |\nu(A)| < \infty;\\ \nu(A\cup B) = \nu (A) + \nu (B) \forall A,\,B \in \sL\text{ with }A\cap B = \emptyset.
\end{gather*}
A finitely additive measure is  $\sigma$-additive if and only if
$$\nu\left(\cup_{k\in \NN}E_k\right) = \sum_{k\in \NN} \nu(E_k) \forall \{E_k\} \subset \sL \text{ with } E_j \cap E_k = \emptyset, j \neq k.  $$
Let $\Upsilon(\sL)$ and $\Sigma(\sL)$ denote, respectively, the families of finitely additive and $\sigma$-additive  measures on $\sL$.
\end{definition}
Since finitely-additive measures are not one-signed, the hypothesis that $\sup_{A \in \sL} |\nu(A)| < \infty$ does not follow from the fact that
$\nu(X) < \infty$.
  The following results are from \cite[\S 1.9-\S1.12]{yosidaetal}.

For $\nu_1,\,\nu_2 \in \Upsilon (\sL),\, E \in \sL$, let
\begin{subequations}\label{pmparts}
\begin{equation}
\begin{split}
(\nu_1\vee\nu_2)(E) &= \sup_{E \supset F \in \sL}\{\nu_1(F)+\nu_2(E\setminus F)\}, \\ (\nu_1\wedge\nu_2)(E)  &= -\big((-\nu_1)\vee (-\nu_2)\big)(E).
\end{split}
\end{equation}
Then $\nu_1\vee\nu_2,\, \nu_1\wedge\nu_2 \in \Upsilon(\sL)$, which is a lattice, and  $\nu\in \Upsilon(\sL)$ can be written
\begin{equation}\label{pm}\nu = \nu^+ -\nu^-\text{ where }  \nu^+ = \nu\vee 0,~~ \nu^- = (-\nu)\vee 0 \text{ and } \nu^+\wedge \nu^- = 0.\end{equation}
$\nu^\pm$ are the  positive and negative parts of $\nu$ and  $|\nu|: = \nu^++\nu_-$ is its total variation (see Theorem \ref{yhthm}).
\end{subequations}
 For $\nu_1,\,\nu_2 \in \Upsilon (\sL)$ write
$\nu_1 \ll \nu_2$ ($\nu_1$ is absolutely continuous with respect to $\nu_2$), if for all $\e>0$ there exists $\d$ such that $|\nu_1(E)| <\e$ when $|\nu_2|(E) < \d$, and write
$\nu_1 \perp \nu_2$ if for every $\e>0$ there exists $E \in \sL$ such that $|\nu_1|(E)+ |\nu_2|(X \setminus E) < \e$.

\begin{remark}\label{absol} When  $\nu_1,\nu_2 \in \Sigma(\sL) \subset\Upsilon(\sL)$ the above definitions imply:
\begin{align*}
&\text{$\nu_1 \ll \nu_2$ if and only if  $|\nu_2|(E) =0 $ implies $\nu_1(E) = 0$} \forall E \in \sL; \\&\text{$\nu_1\perp \nu_2$ ~if and only if $|\nu_1|(E)+ |\nu_2|(X \setminus E) = 0$ for some $E \in \sL$.}\end{align*}
However it is important that a non-negative finitely additive measure $\nu$ which vanishes on $\sN$ (see Theorems \ref{yhthm} and  \ref{thm1*}) need  not satisfy  $\nu \ll \l$ if $\nu \not\in \Sigma (\sL)$.
\qed\end{remark}
\begin{definition}\label{pfa}\textnormal{\cite[\S1.13]{yosidaetal}} A non-negative  $\nu \in \U(\sL)$ is  purely finitely additive (written $\nu \in \Pi(\sL)$) if
$$\left\{\gamma \in \Sigma(\sL): 0\leq\gamma \leq \nu\right\} = \{0\}.
$$
Equivalently,  $0\leq \nu \in\Pi(\sL)$ if and only if $\nu \wedge \gamma =0$ for all  $0\leq \gamma \in \Sigma (\sL)$. In general,
$\nu \in \U(\sL)$ is  purely finitely additive if  $\nu^+$ and $\nu^-$ are purely finitely additive.
\end{definition}
Note that $\Pi(\sL)\cap \Sigma(\sL) = \{0\}$ and if
 $\a \in \RR$ and $\nu \in \Pi(\sL)$ then $\a \nu \in \Pi(\sL)$.
Moreover $\Pi(\sL)$ is a lattice \cite[Thm. 1.17]{yosidaetal}: if $\nu_i \in \Pi(\sL)$, $i = 1,\,2$, then $\nu_1+\nu_2,\,\nu_1\wedge \nu_2,\,\nu_1\vee \nu_2 \in\Pi(\sL)$.
The sense in which a purely finitely additive measure \emph{on a $\sigma$-algebra} is singular with respect to any $\sigma$-additive measure is captured by the following observation which is not true if $\sL$ is only an algebra.
\begin{theorem}  \textnormal{\cite[Thm 1.22]{yosidaetal}} \label{1.22} For $0\leq \gamma \in \Sigma(\sL)$ and  $0\leq \mu \in \Pi(\sL)$ there exists $\{E_k\} \subset \sL$ with
$$E_{k+1} \subset E_k,\quad \mu(E_k) = \mu(X) \forall k \text{ and } \gamma (E_k) \to 0\text{ as }
k \to \infty.$$ Conversely if $0\leq \mu \in \Upsilon(\sL)$  and for all $0\leq \gamma \in \Sigma(\sL)$ a sequence $\{E_k\}$ with these properties exists, then $\mu \in \Pi(\sL)$.
\end{theorem}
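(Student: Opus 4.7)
My plan is to handle the two directions independently, using the lattice operations on $\U(\sL)$ as the main tool in the forward direction and only unpacking the definitions in the converse.

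For the forward implication, I would start from the fact that $\mu \in \Pi(\sL)$ means $\mu \wedge \gamma = 0$ for every $0 \leq \gamma \in \Sigma(\sL)$, so that the dual identity $\mu \vee \gamma = \mu + \gamma$ gives
\[
(\mu\vee\gamma)(X) = \mu(X)+\gamma(X).
\]
Invoking the defining supremum \eqref{pmparts} for $\mu \vee \gamma$, for each $n$ there exists $F_n \in \sL$ with
\[
\mu(F_n)+\gamma(X\setminus F_n) \;>\; \mu(X)+\gamma(X) - 2^{-n}.
\]
Because $\mu(F_n) \leq \mu(X)$ and $\gamma(X\setminus F_n) \leq \gamma(X)$, both slacks are forced small, yielding $\mu(X)-\mu(F_n) < 2^{-n}$ and $\gamma(F_n) < 2^{-n}$ simultaneously.

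Then I would define the decreasing sequence $E_k := \bigcup_{n\geq k} F_n$. Monotonicity of the finitely additive $\mu$ (which follows from non-negativity and $\mu(B) = \mu(A)+\mu(B\setminus A)$ when $A\subset B$) gives $\mu(E_k) \geq \mu(F_n) > \mu(X) - 2^{-n}$ for every $n\geq k$; letting $n$ run through the integers forces $\mu(E_k) = \mu(X)$. For $\gamma$, genuine $\sigma$-subadditivity is available since $\gamma \in \Sigma(\sL)$, so
\[
\gamma(E_k) \;\leq\; \sum_{n\geq k}\gamma(F_n) \;<\; 2^{1-k} \;\longrightarrow\; 0.
\]
This is the only place where countable additivity of $\gamma$, as opposed to mere finite additivity, is used, and it is what prevents a symmetric statement.

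For the converse I would simply test the hypothesis against an arbitrary $\gamma \in \Sigma(\sL)$ with $0 \leq \gamma \leq \mu$, as Definition \ref{pfa} requires. Pick a sequence $\{E_k\}$ for this $\gamma$; then $\mu(X\setminus E_k) = 0$ implies $\gamma(X\setminus E_k) = 0$ by the domination, whence $\gamma(E_k) = \gamma(X)$ for every $k$. Combined with $\gamma(E_k)\to 0$, this forces $\gamma(X) = 0$ and hence $\gamma \equiv 0$, so $\mu$ is purely finitely additive.

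The only real obstacle is the forward direction, and even there the step to watch is the passage from the approximate equalities $\mu(F_n) > \mu(X) - 2^{-n}$ to the exact equality $\mu(E_k) = \mu(X)$: it must be handled without invoking $\sigma$-additivity of $\mu$ (which fails), and this is precisely why the approximating sets are aggregated into a single infinite union rather than, say, arranged into a monotone sequence to which a limit theorem would be applied. Everything else is algebraic manipulation of the lattice identities from \eqref{pmparts}.
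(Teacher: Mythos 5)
Your proof is correct. Note first that the paper does not prove this statement at all: it is quoted verbatim from Yosida--Hewitt \cite[Thm 1.22]{yosidaetal}, so there is no in-paper argument to compare against; your write-up is a self-contained proof using only the ingredients the paper does supply (the lattice operations \eqref{pmparts} and Definition \ref{pfa}).

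On the substance: the key identity $\mu\vee\gamma+\mu\wedge\gamma=\mu+\gamma$ is not stated explicitly in the paper, but it follows directly from \eqref{pmparts}, since for $F\subset E$ the quantity $\nu_1(F)+\nu_2(E\setminus F)$ and its value at the complementary set $E\setminus F$ sum to $\nu_1(E)+\nu_2(E)$, so the supremum and infimum over $F$ sum to the same; you might record this one line, as it is the only non-quoted fact you rely on. After that, your extraction of $F_n$ with $\mu(X)-\mu(F_n)<2^{-n}$ and $\gamma(F_n)<2^{-n}$, the aggregation $E_k=\cup_{n\geq k}F_n$ (legitimate because $\sL$ is a $\sigma$-algebra, as the paper emphasises), the use of mere monotonicity of $\mu$ to get $\mu(E_k)=\mu(X)$, and the use of countable subadditivity of $\gamma$ to get $\gamma(E_k)\leq 2^{1-k}$ are all sound, and you correctly identify that $\sigma$-additivity is needed only for $\gamma$. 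The converse, testing against $0\leq\gamma\leq\mu$ with $\gamma\in\Sigma(\sL)$ and using $\mu(X\setminus E_k)=0$ to force $\gamma(X)=\gamma(E_k)\to 0$, matches the definition of $\Pi(\sL)$ exactly. This is essentially the classical Yosida--Hewitt argument, recovered independently.
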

The significance of purely finitely additive measures is evident from the following.
\begin{theorem} \textnormal{\cite[Thms 1.23 \& 1.24]{yosidaetal}} \label{decomposition} Any $\nu \in \Upsilon(\sL)$ can be written uniquely as $\nu= \mu+\gamma$ where $\mu \in \Pi(\sL)$ and  $\gamma \in \Sigma(\sL)$. Any $\nu \in L^*_\infty(X,\sL,\l)$ can be written uniquely as
\begin{equation}\label{deceqn}\nu = \mu +\gamma \in ( L^*_\infty(X,\sL,\l)\cap\Pi(\sL)) \oplus ( L^*_\infty(X,\sL,\l)\cap \Sigma(\sL)).\end{equation}
 If $\nu \geq 0$ the elements of the decomposition are non-negative. This is the Yosida-Hewitt Decomposition of finitely additive measures.
\end{theorem}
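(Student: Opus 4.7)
The plan is to establish the decomposition first at the level of $\Upsilon(\sL)$ and then transfer it to $L_\infty^*(X,\sL,\l)$.

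First, I would reduce to the non-negative case. Using \eqref{pmparts}, write $\nu = \nu^+-\nu^-$; if one has decompositions $\nu^\pm = \mu^\pm + \gamma^\pm$ with $\mu^\pm \in \Pi(\sL)$ and $\gamma^\pm \in \Sigma(\sL)$, then $\mu = \mu^+-\mu^-$ and $\gamma = \gamma^+-\gamma^-$ give the required decomposition of $\nu$, since $\Pi(\sL)$ and $\Sigma(\sL)$ are both closed under subtraction (the former by the lattice remark following Definition \ref{pfa}). So assume $\nu \geq 0$.

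Next, construct the $\sigma$-additive part as the largest $\sigma$-additive measure dominated by $\nu$. For each $E \in \sL$ set
\begin{equation*}
\gamma(E) = \sup\{\sigma(E) : \sigma \in \Sigma(\sL),\ 0\leq \sigma \leq \nu\}.
\end{equation*}
I would show that $\gamma \in \Sigma(\sL)$ with $0 \leq \gamma \leq \nu$. Finite additivity is straightforward from the definition together with the fact that for any finite family of admissible $\sigma_i$ one can take $\sigma_1\vee\cdots\vee\sigma_n$, which still lies in $\Sigma(\sL)$ and is dominated by $\nu$. The technical core, and the main obstacle, is countable additivity: given disjoint $\{E_k\}\subset \sL$, the bound $\gamma(\cup_k E_k) \leq \sum_k \gamma(E_k)$ follows by picking an admissible $\sigma$ witnessing $\gamma(\cup_k E_k)$ to within $\e$ and applying its $\sigma$-additivity. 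For the reverse inequality, for each $k$ choose $\sigma_k \leq \nu$ in $\Sigma(\sL)$ with $\sigma_k(E_k) > \gamma(E_k)-\e 2^{-k}$; form $\tilde \sigma_n = \sigma_1\vee \cdots \vee \sigma_n \leq \nu$, pass to the pointwise-on-$\sL$ monotone limit, and verify that the limit is again a $\sigma$-additive measure dominated by $\nu$. This passage to the limit is what makes the $\sigma$-algebra hypothesis (as opposed to a mere algebra) essential.

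Having constructed $\gamma$, define $\mu := \nu - \gamma \geq 0$ and verify $\mu \in \Pi(\sL)$: if $0 \leq \sigma \in \Sigma(\sL)$ satisfies $\sigma \leq \mu$, then $\sigma+\gamma \in \Sigma(\sL)$ and $\sigma + \gamma \leq \nu$, whence by maximality $\sigma+\gamma \leq \gamma$, forcing $\sigma = 0$. By Definition \ref{pfa} this gives $\mu \in \Pi(\sL)$.

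For uniqueness, if $\nu = \mu_1+\gamma_1 = \mu_2+\gamma_2$ are two such decompositions, then $\gamma_1-\gamma_2 = \mu_2-\mu_1$ lies in $\Sigma(\sL)$ on the left and in $\Pi(\sL)$ on the right (again using that $\Pi(\sL)$ is closed under subtraction); by the remark $\Pi(\sL)\cap \Sigma(\sL) = \{0\}$ following Definition \ref{pfa} both differences are zero. Finally, for the $L_\infty^*$ statement, if $\nu$ vanishes on $\sN$ then $0 \leq \gamma \leq \nu$ forces $\gamma$ to vanish on $\sN$, and hence so does $\mu = \nu - \gamma$. Thus by Theorem \ref{yhthm} both $\mu$ and $\gamma$ belong to $L_\infty^*(X,\sL,\l)$, yielding the direct-sum decomposition \eqref{deceqn}; the non-negativity claim when $\nu \geq 0$ is immediate from the construction.
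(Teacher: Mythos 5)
The paper offers no proof of this statement---it is quoted directly from Yosida--Hewitt [Thms 1.23 \& 1.24]---and your proposal is a correct reconstruction of the standard argument from that source: define the countably additive part as the supremum of all countably additive measures dominated by $\nu$, verify its $\sigma$-additivity via finite joins and a monotone limit, and obtain pure finite additivity of the remainder from maximality. All the steps you sketch go through (finite joins of admissible measures remain admissible and $\sigma$-additive, $\Pi(\sL)$ is closed under differences by the lattice and scaling properties noted after Definition \ref{pfa}, and $\Pi(\sL)\cap\Sigma(\sL)=\{0\}$ gives uniqueness), so there is nothing in the paper itself to compare against.
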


By \eqref{deceqn},  $\nu = \mu +\gamma$ where  $\mu \in ( L^*_\infty(X,\sL,\l)\cap\Pi(\sL))$ and $\l \gg\gamma\in \Sigma(\sL)$. If $(X,\sL,\l)$ is $\sigma$-finite,  by the Lebesgue-Radon-Nikodym Theorem
\cite[Ch. 3.8]{folland}   there exists $g \in L_1(X,\sL,\l)$
with   \begin{equation}\label{lrnthm}\int_X u \,d\gamma = \int_X u g\,d\l \forall  u \in L_\infty(X, \sL,\l).\end{equation}
In this case \eqref{deceqn} can be re-written
\begin{equation}
\label{deca}\nu = \mu + g\l,\quad \mu \in \Pi(\sL)\cap  L^*_\infty(X,\sL,\l),\quad g\in L_1(X,\sL,\l).
\end{equation}
The relation between this and  the Lebesgue decomposition of Borel measures is the topic of Section \ref{restriction}.

\subsection{$\boldsymbol{\mathfrak{G}}:$  0-1 Measures}\label{mfg}
Recall that $L^*_\infty(X,\sL,\l)$ is the set of  finitely additive measures on $\sL$ that are zero on $\sN$.  Let
\begin{equation}\label{mathfrakg}\mathfrak{G}= \{\omega \in L^*_\infty(X,\sL,\l):~\omega (X) = 1\text{ and } \omega(A) \in \{0,1\} \forall A \in \sL\}.\end{equation}
$A \in \sL$ is called a $\l$-atom if $\l(A) >0$ and if $A \supset E\in \sL$ implies $\l(E) \in \{0,\l(A)\}$.

\begin{theorem}\label{gammameasures} Suppose  $\om \in \mathfrak{G}$.
(a) Either $\om \in \Pi(\sL)$ or $\om \in \Sigma(\sL)$.
(b) If $(X,\sL,\l)$ is $\sigma$-finite and $~\om \in \Sigma(\sL)$, there exists a $\l$-atom $E_\om$ such that $\om(E) = \l(E\cap E_\om)/\l(E_\om)$   for all $E \in \sL$.
\end{theorem}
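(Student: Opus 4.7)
The plan is to apply the Yosida--Hewitt decomposition (Theorem \ref{decomposition}) to write $\om = \mu + \gamma$ with $0 \leq \mu \in \Pi(\sL)$ and $0 \leq \gamma \in \Sigma(\sL)$, and then to exploit $\mu \wedge \gamma = 0$ (guaranteed by Definition \ref{pfa}) together with the $\{0,1\}$-valuedness of $\om$ to force one component to vanish. For non-negative $\mu$ and $\gamma$, unwinding \eqref{pmparts} gives
\begin{equation*}
0 \;=\; (\mu \wedge \gamma)(X) \;=\; \inf_{F \in \sL}\bigl\{\mu(F) + \gamma(X \setminus F)\bigr\}.
\end{equation*}
Set $c = \gamma(X)$, so $\mu(X) = 1 - c$, and split the infimum over $F$ according to the value of $\om(F) = \mu(F) + \gamma(F) \in \{0,1\}$. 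When $\om(F) = 0$, both $\mu(F)$ and $\gamma(F)$ vanish and $\mu(F) + \gamma(X\setminus F) = c$; when $\om(F) = 1$, one has $\mu(F) = 1 - \gamma(F)$ and $\mu(F) + \gamma(X\setminus F) = 1 + c - 2\gamma(F) \geq 1 - c$ since $\gamma(F) \leq c$. Thus the infimum is at least $\min(c, 1-c)$, which must be $0$, forcing $c \in \{0,1\}$: either $\gamma \equiv 0$ and $\om = \mu \in \Pi(\sL)$, or $\mu \equiv 0$ and $\om = \gamma \in \Sigma(\sL)$.

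\textbf{Part (b).} With $\om \in \Sigma(\sL)$ I would first use $\sigma$-finiteness to reduce to the case $\l(X) < \infty$: write $X = \bigsqcup_n Y_n$ disjointly with $\l(Y_n) < \infty$; since $\om(Y_n) \in \{0,1\}$ and $\sum_n \om(Y_n) = \om(X) = 1$ by countable additivity, exactly one $Y_{n_0}$ has $\om$-measure $1$ and $\om$ is concentrated on it. Now let $\sA = \{A \in \sL : \om(A) = 1\}$ and $m = \inf\{\l(A) : A \in \sA\}$. The claim is $m > 0$: given a minimizing sequence $B_n$, the cumulative intersections $A_n = \bigcap_{k \leq n} B_k$ are decreasing, lie in $\sA$ (a finite intersection of members of $\sA$ has $\om$-null complement), and satisfy $\l(A_n) \leq \l(B_n)$; if $m = 0$, then $A = \bigcap_n A_n$ would be $\l$-null while $\om(A) = \lim_n \om(A_n) = 1$ by continuity from above, contradicting $\om \in L^*_\infty$. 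The same intersection construction, applied to a sequence whose $\l$-measures decrease to $m$, produces $E_\om \in \sA$ with $\l(E_\om) = m$.

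To see $E_\om$ is a $\l$-atom, let $E \in \sL$ with $E \subseteq E_\om$: if $\om(E) = 1$, minimality forces $\l(E) = m$; if $\om(E) = 0$, then $\om(E_\om \setminus E) = 1$ so $E_\om \setminus E \in \sA$, and minimality forces $\l(E_\om \setminus E) \geq m$, whence $\l(E) = 0$. Finally, for arbitrary $E \in \sL$, $\om(E) = \om(E \cap E_\om)$ since $\om(E \setminus E_\om) \leq \om(X \setminus E_\om) = 0$, and the atom dichotomy $\l(E \cap E_\om) \in \{0, \l(E_\om)\}$, combined with $\om \in L^*_\infty$ (for the null case) and $\om(E_\om) = 1$ (for the other), yields the required identity $\om(E) = \l(E \cap E_\om)/\l(E_\om)$. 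The main obstacle is in (a): recognising that the case-split on $\om(F)$ inside the $\inf$-formula produces the bound $\min(c,1-c)$, which is exactly the lever needed to force either $\mu$ or $\gamma$ to vanish; after that, part (b) is a fairly standard minimization argument in a $\sigma$-additive setting.
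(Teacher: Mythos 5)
Your proof is correct, but it departs from the paper's argument at both stages. For part (a) the paper also starts from the Yosida--Hewitt decomposition $\om=\mu+\gamma$, but then invokes Theorem \ref{1.22} to produce a decreasing sequence $\{E_k\}$ with $\mu(E_k)=\mu(X)$ and $\gamma(E_k)\to 0$, and reads the dichotomy off the values $\om(E_k)\in\{0,1\}$; you instead evaluate $(\mu\wedge\gamma)(X)=\inf_{F\in\sL}\{\mu(F)+\gamma(X\setminus F)\}=0$ directly from the lattice formula \eqref{pmparts} and split on $\om(F)$, obtaining the lower bound $\min(c,1-c)$. Both are sound; yours is more self-contained (it needs only the definition of $\Pi(\sL)$ via $\wedge$ and the explicit sup/inf formula, and the fact from Theorem \ref{decomposition} that the components of a non-negative $\om$ are non-negative), while the paper's leans on the structural Theorem \ref{1.22}. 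For part (b) the divergence is greater: the paper applies the Lebesgue--Radon--Nikodym theorem to write $\om = g\l$ and locates the atom as a level set $\{x: g(x)=\a\}$ via a dyadic partition of the range of $g$, whereas you minimise $\l$ over $\sA=\{A\in\sL:\om(A)=1\}$ and extract $E_\om$ as a countable decreasing intersection. Your route avoids Radon--Nikodym entirely, using only countable additivity of $\om$, continuity from above (legitimate here since $\om(A_1)=1<\infty$ and $\l(A_1)<\infty$ along your minimising sequence), and the vanishing of $\om$ on $\sN$, which is exactly what makes $m>0$; the paper's route has the small bonus of exhibiting the Radon--Nikodym density $g=\chi_{E_\om}/\l(E_\om)$ explicitly. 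Every small step you rely on checks out, so this is a genuinely different and somewhat more elementary proof of the same statement.
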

\begin{remark*} Hence $\mG\subset \Pi(\sL)$   if $(X,\sL,\l)$ is $\sigma$-finite and  $\sL$ has no $\l$-atoms.
A stronger statement,  Lemma \ref{gammeas}, can be made when $\sL$ is the Borel $\sigma$-algebra of a locally compact Hausdorff space.\qed
\end{remark*}
\begin{proof} (a) For $\om \in \mathfrak{G}$, by Theorem \ref{decomposition},  $\om = \mu + \gamma$ where $\mu \in \Pi(\sL)$ and $\gamma\ll \l$, $\gamma \in \Sigma(\sL)$ are non-negative.
By Theorem \ref{1.22} there exists $\{E_k\} \subset \sL$ with  $\mu(E_k) = \mu(X)$ for all $k$ and $\gamma (E_k) \to 0$ as $k \to \infty$.
If $\om(E_k) = 0$ for some $k$ then $0 = \om(E_k) \geq \mu(E_k) =\mu(X)$ and so $\om = \gamma \in \Sigma(\sL)$. If $\om (E_k) = 1$ for all $k$, then
$$1 = \om(E_k) = \mu(E_k) + \gamma (E_k) = \mu(X) + \gamma(E_k) \to \mu(X) \text{ as } k \to \infty.
$$
Hence $\om(X) = 1 = \mu(X)$ and consequently $\gamma(X) = 0$. Thus $\om = \mu \in \Pi(\sL)$.

(b) Since  $\om \ll \l$ where $\om\in \Sigma(\sL)$ is finite and $\l$ is $\sigma$-additive,  by \eqref{lrnthm} there exists $g \in L_1(X,\sL,\l)$ with $\om(E) = \int_E g\,d\l$ for all $E \in \sL$.
So $g \geq 0$ $\l$-almost everywhere on $X$.
Since $g \in L_1(X,\sL,\l)$,  $\l \left(\{x\in X: g(x) \geq n\}\right) \to 0$ as $n \to \infty,$  and hence, by \cite[Cor. 3.6]{folland},
$$\om \left(\{x\in X: g(x) \geq n\}\right) = \int_{\{x\in X: g(x) \geq n\}} g\,d\l \to 0 \text{ as $n \to \infty$} .
$$
Since $\om \in \mG$ it follows that $\om \left(\{x\in X: g(x) \geq N\}\right) =0$  for some $N\in \NN$.  Now, by finite additivity, $\om(X) = 1$ and $\om (E) \in \{0,1\}$ implies that for every $K \in \NN$ there exists a unique
$k_{_K} \in \{1,\cdots, N2^K\}$ such that
$$1=\om(X) = \sum_{k=1}^{N2^K} \om (E_k)= \om (E_{k_{_K}}) \text{ where $E_k = \left\{x\in X:\frac{k-1}{2^K} \leq g(x) <\frac{k}{2^K} \right\}$}. $$ Hence $E_{k_{_{K+1}}} \subset E_{k_{_K}} $ and since $\om$ is $\sigma$-additive it follows that $\om(E_\om) =1$ where
 $E_\om = \{x \in X: g(x) = \a\}$ for some $\a \in [0,N]$. Then $\l(E_\om)>0$ because $\om(E_\om) = 1$ and, $ \forall E \in \sL$,
$$\om (E) = \om (E \cap E_\om) = \int_{E\cap E_\om} \a\,d\l = \a\l(E\cap E_\om).
$$
Hence $\a = 1/\l(E_\om)$, and $E_\om$ is a $\l$-atom with the required properties because $\om \in \mG$ .
\end{proof}

\begin{subequations}\label{uu}
\begin{theorem}\label{ans}
For $u \in \Li$ and $\om \in \mG$ there is a unique $\a \in I := [-\|u\|_\infty, \|u\|_\infty]$ such that
\begin{gather}\om\left(\{x \in X: |u(x) - \a| < \e\}\right) =1 \forall \e >0,\label{u1}
\\
\int_X u \,d \om = \a \text{ and } \int_X |u| \,d \om = |\a|\label{u2}.
\end{gather}
\end{theorem}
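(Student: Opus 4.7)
The plan is a bisection argument that exploits the $\{0,1\}$-valued nature of $\omega$. Since $u \in L_\infty(X,\sL,\l)$, the set $u^{-1}(I)^c$ is $\l$-null, so $\omega(u^{-1}(I)) = 1$ by Theorem \ref{yhthm}. Setting $I_0 = I$, I would inductively produce closed intervals $I_k \subset I_{k-1}$ of length $2^{1-k}\|u\|_\infty$ such that $\omega(u^{-1}(I_k)) = 1$: given $I_k = [a,b]$, split it as $[a,c] \cup (c,b]$ with $c = (a+b)/2$, so the two preimages are disjoint with union $u^{-1}(I_k)$. Finite additivity together with the hypothesis $\omega(E) \in \{0,1\}$ then forces exactly one of the two preimages to have $\omega$-measure 1, and I take $I_{k+1}$ to be either $[a,c]$ or $[c,b]$ accordingly (enlarging $(c,b]$ to $[c,b]$ when needed only increases $\omega$ monotonically). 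The nested closed intervals shrink to a unique point $\alpha \in I$ by completeness of $\RR$.

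Given $\epsilon > 0$, choosing $k$ with $2^{1-k}\|u\|_\infty < \epsilon$ yields $I_k \subset (\alpha - \epsilon, \alpha + \epsilon)$ and hence $u^{-1}(I_k) \subset E_\epsilon := \{x : |u(x) - \alpha| < \epsilon\}$. Since $\omega \geq 0$ is monotone and $\omega(E_\epsilon) \in \{0,1\}$, one concludes $\omega(E_\epsilon) = 1$, which is \eqref{u1}. For \eqref{u2} the estimate
$$\left|\int_X u\,d\omega - \alpha\right| \leq \int_{E_\epsilon} |u - \alpha|\,d\omega + \int_{X \setminus E_\epsilon} |u - \alpha|\,d\omega \leq \epsilon\,\omega(E_\epsilon) + 2\|u\|_\infty\,\omega(X \setminus E_\epsilon) = \epsilon$$
holds by the standard $L_\infty$-estimate for integration against finitely additive measures reviewed in Section \ref{fams}; letting $\epsilon \to 0$ gives $\int_X u\,d\omega = \alpha$. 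The identical estimate with $|u|$ and $|\alpha|$ replacing $u$ and $\alpha$, combined with the reverse triangle inequality $\big||u| - |\alpha|\big| \leq |u - \alpha|$, yields $\int_X |u|\,d\omega = |\alpha|$.

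Uniqueness is then immediate: if distinct $\alpha_1, \alpha_2 \in I$ both satisfied \eqref{u1}, setting $\epsilon = |\alpha_1 - \alpha_2|/2$ would make $E_{\epsilon,1}$ and $E_{\epsilon,2}$ disjoint by the triangle inequality, yet each would have $\omega$-measure $1$, contradicting $\omega(X) = 1$ and finite additivity. The main subtleties I anticipate are the bookkeeping around the shared endpoint $c$ at the bisection step (settled by monotonicity, since the shared level set $u^{-1}(\{c\})$ causes no damage when absorbed into a closed half) and the careful invocation of integral estimates against a merely finitely additive measure; both are routine given the apparatus assembled in Section \ref{fams}, but they merit explicit verification.
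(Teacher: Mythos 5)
Your proof is correct, but your route to existence differs from the paper's. The paper establishes uniqueness first (exactly as you do, via disjointness of the two $\e$-level sets and finite additivity) and then proves existence by contradiction: if no $\a$ satisfied \eqref{u1}, each $\a\in I$ would have a neighbourhood whose $u$-preimage is $\om$-null, compactness of $I$ would yield a finite subcover, and finite additivity would force $1=\om(X)\leq 0$. You instead locate $\a$ constructively by bisection, using the $\{0,1\}$-valued property at each halving step to select the half-interval whose preimage carries full $\om$-measure, and completeness of $\RR$ to identify the limit point; this is the nested-interval dual of the paper's Heine--Borel argument, and both rest on the same two facts (finite additivity plus $\om(A)\in\{0,1\}$). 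Your treatment of \eqref{u2} is also more explicit than the paper's, which merely asserts that the identity follows because $u=\a$ holds $\om$-almost everywhere and $\om(X)=1$; your split of $\int_X|u-\a|\,d\om$ over $E_\e$ and its complement is precisely the estimate that assertion suppresses, and your reduction of the $|u|$ statement to the reverse triangle inequality matches the paper's one-line remark. The bookkeeping points you flag (the shared endpoint $c$, absorbed by monotonicity of the non-negative measure $\om$, and the validity of the elementary integral estimates for finitely additive measures) are indeed routine within the framework of Section \ref{fams}, so there is no gap. What your approach buys is a constructive identification of $\a$ without invoking compactness of $I$; what the paper's buys is brevity, since the covering argument dispatches existence in three lines once uniqueness is known.
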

\end{subequations}
\begin{remark*}  Thus, on $\Li$  elements of $\mathfrak{G}$ are analogous to Dirac measures $\mathfrak D$ in the theory of continuous functions on topological spaces. When  \eqref{u1} holds we say  that $u =\a$ on $X$ $\om$-almost everywhere even though it does not imply that $\om\left(\{x \in X: u(x) =\a \}\right) =\om(X)$ if $\om \not\in \Sigma(\sL)$.   \qed
\end{remark*}
\begin{proof}Since $\om$ is zero on $\sN$, it is clear that  $\a \in I$ if \eqref{u1} holds.
Now \eqref{u1} cannot hold for distinct $\a_1 <\a_2$ because, with  $\e =  (\a_2-\a_1)/4$ the sets
$\om(\{x \in X: |u(x) - \a_i| < \e\}),~i= 1,\,2$, are disjoint and by finite additivity the $\om$-measure of their union would be 2. Since  $\om\in \mG$, there is at most one  $\a$ for which \eqref{u1} holds.

Now suppose that there is no $\a$ for which \eqref{u1} holds. Then for each $\a \in I$ there is an $\e_\a>0$ such that
$\om\left(\{x \in X: |u(x) - \a| < \e_\a\}\right)=0$. By compactness there exists $\{\a_1,\cdots, \a_K\} \subset I$ such that  $I \subset \cup_{k=1}^K (\a_k -\e_{\a_k}, \a_k +\e_{\a_k})$ and consequently
\begin{align*}1 =\om (X)  &=\om \left(\{x: u(x) \in\cup_{k=1}^K (\a_k -\e_{\a_k}, \a_k +\e_{\a_k})\right)\\& \leq \sum_{k=1}^K \om\left(\{x: u(x) \in(\a_k -\e_{\a_k}, \a_k +\e_{\a_k})\}\right) = 0.
\end{align*}
Hence  \eqref{u1} holds for a unique $\a$. The first part of \eqref{u2} follows because, by \eqref{u1}, $u =\a $ $\om$-almost everywhere on $ X$ and $\om(X) = 1$. Finally,  $\om\left(\{x \in X: \big| |u(x)| - |\a| \big| < \e\}\right) =1 \forall \e >0$, and the second part of \eqref{u2} follows.
\end{proof}
 The next result  give
the existence elements of $\mG$.
\begin{theorem}\label{thm1*} \textnormal{\cite[Thm. 4.1]{yosidaetal}} Let $\sE\subset \sL\setminus \sN$ have the property that $E_\ell \in \sE,\,1 \leq \ell\leq L$
implies that $\cap_{\ell=1}^L E_\ell \notin \sN$. Then there exists  $\omega \in \mG$ with  $\omega (E) = 1$ for all $E \in \sE$.
\end{theorem}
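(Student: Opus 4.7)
My plan is to construct $\omega$ as the characteristic indicator of an ultrafilter on the quotient Boolean algebra $\sB = \sL/\sN$, where $A \sim B$ iff $A \triangle B \in \sN$. The first step is to verify that $\cup$, $\cap$, and set-complementation on $\sL$ descend to join, meet, and complementation on $\sB$, making $\sB$ a Boolean algebra with zero $[\emptyset]$ and unit $[X]$. The hypothesis on $\sE$ translates directly into the statement that the image family $\sE^\sharp := \{[E] : E \in \sE\} \subset \sB$ has the finite intersection property, and therefore generates a proper filter $\Fk_0$ on $\sB$; the properness is precisely the content of $\cap_{\ell=1}^L E_\ell \notin \sN$.

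The second step, and the only one that uses the axiom of choice, is to apply Zorn's lemma to the partially ordered set of proper filters on $\sB$ containing $\Fk_0$, ordered by inclusion, to obtain a maximal such filter $\Fk$. A standard argument shows maximality in a Boolean algebra forces $\Fk$ to be an ultrafilter, which is characterized equivalently by: exactly one of $b$ and $\neg b$ belongs to $\Fk$ for each $b \in \sB$, and $b_1 \vee b_2 \in \Fk$ iff $b_1 \in \Fk$ or $b_2 \in \Fk$.

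The third step is to define $\omega \colon \sL \to \{0,1\}$ by $\omega(E) = 1$ if and only if $[E] \in \Fk$, and to check the axioms for membership in $\mG$. Well-definedness on equivalence classes is automatic; vanishing on $\sN$ follows because $[N] = [\emptyset] \notin \Fk$; $\omega(X) = 1$ because $[X] \in \Fk$; and $\omega(E) = 1$ for each $E \in \sE$ by construction. For finite additivity on disjoint $A, B \in \sL$, the ultrafilter dichotomy applied to $[A \cup B] = [A] \vee [B]$, together with the observation that $[A] \wedge [B] = [\emptyset] \notin \Fk$ prevents both $[A]$ and $[B]$ from lying in $\Fk$, yields $\omega(A \cup B) = \omega(A) + \omega(B)$. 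Boundedness is immediate since $\omega$ takes only values in $\{0,1\}$, so $\omega \in L_\infty^*(X,\sL,\l)$ and, by \eqref{mathfrakg}, $\omega \in \mG$.

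The essential obstacle is the ultrafilter extension via Zorn's lemma; the rest is definition chasing. A subtle point worth dwelling on is that properness of $\Fk_0$ is exactly where the full strength of the hypothesis enters — had one only assumed $\cap_{\ell=1}^L E_\ell \neq \emptyset$, finite intersections might still represent the zero element of $\sB$ and the construction would collapse at the first step.
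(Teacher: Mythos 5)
Your proof is correct and follows essentially the same route as the paper, which defers to Yosida--Hewitt's Zorn's lemma argument and explicitly notes that the same construction underlies the correspondence \eqref{11ultra} between $\mG$ and ultrafilters; your only cosmetic difference is working with an ultrafilter on the quotient Boolean algebra $\sL/\sN$ rather than with a maximal family of non-null sets in $\sL$ closed under finite intersection. The verification that the ultrafilter dichotomy yields finite additivity and vanishing on $\sN$ is exactly the content of the cited proof.
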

The proof is by Zorn's lemma and for  given $\sE$ there can be uncountably many $\om$.
The same argument underlies the  correspondence between elements of $\mathfrak{G}$ and ultrafilters.
\begin{definition}\label{ultrafilter} Given $(X,\sL,\l)$, a filter is a family $\sF$ of subsets of $X$  satisfying:
(i) $X \in \sF \text{ and }\sN \cap \sF = \emptyset;$
(ii) $E_1,\,E_2 \in \sF, \text{ implies that } E_1\cap E_2 \in \sF;$
(iii) $E_2\supset E_1\in \sF  \text{ implies that  } E_2 \in \sF.$
A maximal filter $\sF$, one which  satisfies
(iv) $\sF \subset  \hat \sF \text{  implies  }   \sF = \hat \sF$,
is called an ultrafilter.
Let $\Fk$ denote the family of  ultrafilters.
\end{definition}
It is obvious  that  when $\om \in \mathfrak{G}$
\begin{subequations}\label{11ultra}
\begin{equation}
\sF(\om): = \{E \in \sL :~ \om (E) = 1\} \in \mathfrak F.
\end{equation}
Conversely, when $\sF \in \Fk$,
\begin{equation}\om(E): = \left\{\begin{array}{l} 1 \text{ if }E \in \sF\\ 0 \text{ otherwise}\end{array}\right\} \in \mG.\label {2.9b}
\end{equation}
\end{subequations}
This holds because, exactly as in the proof of \cite[Thm. 4.1]{yosidaetal}, the maximality of  $\sF\in \mathfrak F$
implies that for $E \in \sL$ precisely one of $E$ and $X\setminus E$ is in $\sF$.
Thus \eqref{2.9b} defines $\om \in \mG$  with $\sF = \sF(\om)$ and hence $\om \leftrightarrow \sF(\om)$ is a one-to-one correspondence between $\mG$
and  $\Fk$.

By the essential range of $u$ is meant the set
\begin{equation} \label{ERange}
\sR(u):= \left\{\a \in \RR: \l\big(\{x: |u(x)-\a| <\e\}\big)>0 \forall \e>0\right\}.
\end{equation}
\begin{corollary}\label{erange}
For $u \in L_\infty(X,\sL, \l)$,
$$\left\{\int_X u\,d\om:\quad\om \in \mathfrak{G}\right\} = \sR(u).
$$
\end{corollary}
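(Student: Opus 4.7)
The plan is to establish the two inclusions separately using Theorems \ref{ans} and \ref{thm1*}, both of which have already been set up to make this corollary almost immediate.

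For the inclusion $\{\int_X u\,d\om : \om \in \mG\} \subset \sR(u)$, fix $\om \in \mG$ and let $\a = \int_X u\,d\om$. By Theorem \ref{ans}, $\a \in [-\|u\|_\infty,\|u\|_\infty]$ and
\[\om\big(\{x\in X : |u(x)-\a|<\e\}\big) = 1 \qquad \forall \e>0.\]
Since every $\om \in \mG \subset \Lid$ vanishes on $\sN$ by Theorem \ref{yhthm}, a set of $\om$-measure $1$ cannot be null. Hence $\l\big(\{x: |u(x)-\a|<\e\}\big)>0$ for every $\e>0$, so $\a \in \sR(u)$ by definition \eqref{ERange}.

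For the reverse inclusion $\sR(u) \subset \{\int_X u\,d\om : \om \in \mG\}$, fix $\a \in \sR(u)$ and consider the family
\[\sE = \big\{E_\e : \e>0\big\}, \qquad E_\e := \{x\in X : |u(x)-\a|<\e\}.\]
By definition of $\sR(u)$, each $E_\e \in \sL\setminus\sN$. Moreover, any finite intersection $E_{\e_1}\cap\cdots\cap E_{\e_L}$ equals $E_{\min\{\e_\ell\}}$, which is again in $\sL\setminus\sN$. Hence $\sE$ satisfies the hypothesis of Theorem \ref{thm1*}, which produces $\om \in \mG$ with $\om(E_\e)=1$ for every $\e>0$. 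The uniqueness assertion in Theorem \ref{ans} then forces the associated constant to be $\a$, so $\int_X u\,d\om = \a$.

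I do not foresee any real obstacle: both inclusions are direct applications of the two theorems just proved, with Theorem \ref{ans} providing the uniqueness needed to identify $\int u\,d\om$ with $\a$, and Theorem \ref{thm1*} providing the existence of a suitable $\om$ once one verifies the finite intersection property of $\sE$. The only small point to be careful about is ensuring that each $E_\e$ lies outside $\sN$, which is precisely the content of the definition of the essential range.
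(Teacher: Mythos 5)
Your proof is correct and follows essentially the same route as the paper: the paper's own (very terse) argument likewise obtains $\sR(u)\subset\{\int_X u\,d\om\}$ from Theorems \ref{ans} and \ref{thm1*} via the finite intersection property of the sets $\{x:|u(x)-\a|<\e\}$, and the reverse inclusion from the fact that $\om(E)=1$ forces $\l(E)>0$ together with \eqref{u1}. Your write-up simply makes explicit the details the paper leaves to the reader.
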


\begin{proof} It follows from Theorems \ref{ans} and \ref{thm1*}  that the right side
is a subset of the left. Since  $\om (E) =1,\,E \in \sL$,  implies $\l(E) >0$, it is immediate from Theorem \ref{ans} that the right side contains the left.
\end{proof}
In a topological space   \eqref{11ultra}, \eqref{ERange} and Corollary \ref{erange} can be localized to points, \eqref{GF},  \eqref{srux} and \eqref{localsR}.

For $A \in \sL$, let
$\Delta_A = \{\omega \in \mathfrak{G}:~\omega (A) = 1\}$ and let $\{ \Delta_A: A \in \sL\}$ be a base for the topology $\mathfrak{t}$ on $\mG$.
  Note from Theorem \ref{thm1*} that $\Delta_A$ is empty if and only if $A\in \sN$ and $\Delta_A$ is both open and closed because $\mathfrak{G} \setminus \Delta_A = \Delta_{X \setminus A}$.
For $u \in L_\infty(X,\sL, \l)$  let $L[u]: \mathfrak{G} \to \RR$ be defined by
\begin{equation} \label{Feqn}
L[u](\omega) = \int_X u \,d\omega \forall \omega \in \mathfrak{G}.
\end{equation}
\begin{theorem}\label{thm10}\textnormal{\cite[Thms. 4.2 \& 4.3]{yosidaetal}} (a) $(\mathfrak{G}, \mathfrak{t})$ is a compact Hausdorff topological space.\\
(b) For $u \in \Li$, $L[u]$ is continuous on $(\mathfrak{G}, \mathfrak{t})$ with
$$ \|u\|_\infty = \|L[u]\|_{C(\mG,\mathfrak{t})}\colon = \sup_{\omega \in \mathfrak{G}} |L[u](\omega)|,$$
and  $u \mapsto L[u]$ is linear from $L_\infty(X,\sL, \l)$ to $C(\mathfrak{G},\mathfrak{t})$.
Moreover, for
$u,v \in L_\infty(X,\sL, \l)$,
\begin{equation}\label{uveqn}L[u](\omega)L[v](\omega) = L[uv](\omega) \forall \omega \in \mathfrak{G}.
\end{equation}
Conversely, for every real-valued continuous function $U$ on
$(\mathfrak{G}, \mathfrak{t})$ there exists $u \in L_\infty(X,\sL,\l)$ with $U = L[u]$. So $L$ is an isometric isomorphism between Banach algebras $L_\infty(X,\sL, \l)$ and $C(\mathfrak{G}, \mathfrak{t})$.
\end{theorem}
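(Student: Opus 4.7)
For part (a), my plan is to realize $\mG$ as a subspace of the compact Hausdorff product space $\{0,1\}^{\sL}$ via the embedding $\om \mapsto (\om(A))_{A\in\sL}$. Since $\Delta_A$ is precisely the preimage of the slice $\{x_A = 1\}$ under this embedding, the topology $\mathfrak{t}$ generated by $\{\Delta_A: A \in \sL\}$ coincides with the subspace topology induced from the product. Tychonoff gives compactness of the ambient space, so it suffices to show $\mG$ is closed in the product: the defining conditions of $\mG$ — namely $\om(\emptyset)=0$, $\om(X)=1$, $\om(N)=0$ for each $N\in\sN$, and finite additivity $\om(A\cup B)=\om(A)+\om(B)$ whenever $A\cap B=\emptyset$ — are each the vanishing of a continuous function of finitely many coordinates, hence each cuts out a closed set. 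The Hausdorff property is immediate: if $\om_1\neq \om_2$, pick $A\in\sL$ with $\om_1(A)\neq \om_2(A)$; then WLOG $\om_1\in\Delta_A$ and $\om_2\in\Delta_{X\setminus A}$, and these are disjoint (since $\mG\setminus\Delta_A=\Delta_{X\setminus A}$).

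For the first part of (b), I would first verify continuity and the isometry for characteristic functions $u=\chi_A$: here $L[u](\om)=\om(A)$ is a coordinate function, hence continuous, and both norms equal 1 if $A\notin\sN$. Extend by linearity to simple functions; linearity of $L$ and the bound $|L[u](\om)|\leq \|u\|_\infty$ (from $|\om|(X)=1$) are immediate. For general $u\in\Li$, approximate uniformly by simple functions and use that $L$ is bounded to conclude $L[u]\in C(\mG,\mathfrak{t})$ and that $\|L[u]\|_{C(\mG,\mathfrak{t})}\leq \|u\|_\infty$. The reverse inequality uses Corollary \ref{erange}: $\{L[u](\om):\om\in\mG\}=\sR(u)$, together with the standard fact $\|u\|_\infty=\sup\{|\a|:\a\in\sR(u)\}$.

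For the multiplicative identity \eqref{uveqn}, fix $\om\in\mG$ and $u,v\in\Li$ with $L[u](\om)=\a$, $L[v](\om)=\b$ from Theorem \ref{ans}. For any $\e>0$, both $E_\e=\{|u-\a|<\e\}$ and $F_\e=\{|v-\b|<\e\}$ are in $\sF(\om)$, hence $\om(E_\e\cap F_\e)=1$ by finite additivity (taking complements and using $\om\in\mG$). On $E_\e\cap F_\e$,
\[
|uv-\a\b|\leq |u||v-\b|+|\b||u-\a|\leq (\|u\|_\infty+|\b|)\,\e,
\]
so the uniqueness clause of Theorem \ref{ans} forces $L[uv](\om)=\a\b=L[u](\om)L[v](\om)$.

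The main work is surjectivity, for which I would invoke Stone–Weierstrass. The range $L(\Li)$ is a subalgebra of $C(\mG,\mathfrak{t})$ by \eqref{uveqn} and linearity; it contains the constant function $1=L[\chi_X]$; it separates points of $\mG$ since, as in (a), for $\om_1\neq\om_2$ some $\Delta_A$ separates them and $L[\chi_A]$ takes values $1$ and $0$ there; and it is closed because $L$ is an isometry from a Banach space, so its image is complete hence closed in $C(\mG,\mathfrak{t})$. By the real Stone–Weierstrass theorem, $L(\Li)=C(\mG,\mathfrak{t})$. The only subtle point, which is the likely obstacle, is justifying continuity and the multiplicativity formula for arbitrary $u\in\Li$ rather than simple $u$: the finitely additive integral does not automatically commute with $L^\infty$-limits, so I must explicitly use uniform approximation by simple functions together with the bound $\|L\|\leq 1$ to pass the algebraic and continuity properties to the closure.
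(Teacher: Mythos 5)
The paper offers no proof of Theorem \ref{thm10}: it is quoted verbatim from Yosida--Hewitt \cite[Thms.~4.2 \& 4.3]{yosidaetal}, so there is nothing internal to compare against. Your argument is, however, correct and self-contained, and it follows the standard modern route: Tychonoff plus closedness of $\mG$ in $\{0,1\}^{\sL}$ for compactness, and Stone--Weierstrass for surjectivity (Yosida--Hewitt instead exploit total disconnectedness to approximate continuous functions on $\mG$ by combinations of the clopen indicators $\chi_{\Delta_A}=L[\chi_A]$, which amounts to the same thing). Two small points are worth making explicit. First, for the identification of $\mathfrak t$ with the subspace topology you should note that $\Delta_A\cap\Delta_B=\Delta_{A\cap B}$ (if $\om(A\cap B)=0$ then $\om(A\setminus B)=\om(B\setminus A)=1$, contradicting $\om(X)=1$ by finite additivity), so that $\{\Delta_A\}$ really is a base closed under finite intersections and the slices $\{x_A=0\}$ restrict to $\Delta_{X\setminus A}$; with that, the two topologies visibly coincide. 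Second, your reverse inequality rests on the identity $\|u\|_\infty=\sup\{|\a|:\a\in\sR(u)\}$; this is standard but should be justified (e.g.\ $\sR(u)\subset[-\|u\|_\infty,\|u\|_\infty]$ by Theorem \ref{ans}, while for each $\e>0$ the set $\{|u|>\|u\|_\infty-\e\}$ is non-null, so Theorem \ref{thm1*} supplies $\om\in\mG$ with $|L[u](\om)|\geq\|u\|_\infty-\e$, avoiding any appeal to compactness of $\sR(u)$). You also correctly flag the only genuinely delicate issue, namely that continuity, the norm bound and multiplicativity must be established through uniform approximation by simple functions because the finitely additive integral is defined that way; your treatment of \eqref{uveqn} via the uniqueness clause of Theorem \ref{ans} is clean and complete.
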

 Since $L_\infty(X,\sL,\l)$ and ${C(\mathfrak{G}, \mathfrak{t})}$  are isometrically isomorphic, $u_k\wk u_0$  in $L_\infty (X, \sL, \l)$ if and only if  $L[u_k] \wk L[u_0]$ in $C(\mathfrak{G}, \mathfrak{t})$.
Since $(\mathfrak{G}, \mathfrak{t})$ is a compact Hausdorff topological space, it follows from the opening remarks of the Introduction that $L[u_k] \rightharpoonup L[u_0]$ in $C(\mathfrak{G},\mathfrak{t})$ if and only if  $\{\|L[u_k]\|_{C(\mG,\mathfrak{t})}\}$ is bounded and $L[u_k] \to L[u_0]$ pointwise on $\mathfrak{G}$. Hence  $u_k \rightharpoonup u_0\text{ in }L_\infty(X,\sL, \l)$ if and only if
\begin{equation}\label{key}
 \|u_k\|_\infty \leq M \text{ and } \int_X u_k \,d\om \to \int_X u_0 \,d\om \text{ as } k \to \infty \text{ for all $\om \in \mathfrak{G}$}.
\end{equation}
Sequential weak continuity of composition operators  is an obvious consequence.
\begin{theorem}\label{compositions}
 If $u^n_k \wk  u_0^n$ in $L_\infty(X,\sL,\l)$ as $k \to \infty$,  $n \in \{1,\cdots,N\}$, and  $F:\RR^N \to \RR$ is continuous,  then
$F(u^{1}_k, \cdots , u^{N}_k) \wk F(u_0^{1}, \cdots , u_0^{N})$ in $L_\infty(X,\sL,\l)$.
\end{theorem}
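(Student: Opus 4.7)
The plan is to verify the criterion \eqref{key} for weak convergence in $L_\infty(X,\sL,\l)$: one must check boundedness of $\{\|F(u_k^1,\dots,u_k^N)\|_\infty\}$ and pointwise convergence of the integrals $\int_X F(u_k^1,\dots,u_k^N)\,d\om$ for every $\om\in\mG$. Boundedness is immediate: by uniform boundedness, each weakly convergent sequence satisfies $\|u_k^n\|_\infty\le M_n$ for some $M_n<\infty$, so the essential values lie in the compact box $K=\prod_{n=1}^N[-M_n,M_n]$, on which $F$ is bounded by some constant $C$; hence $\|F(u_k^1,\dots,u_k^N)\|_\infty\le C$.

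The substantive step is to express $\int_X F(u_k^1,\dots,u_k^N)\,d\om$ in a form that makes its convergence transparent. Fix $\om\in\mG$. By Theorem \ref{ans}, for each $n$ and $k$ there is a unique $\a_k^n\in[-M_n,M_n]$ with
\[
\om\bigl(\{x:|u_k^n(x)-\a_k^n|<\e\}\bigr)=1\quad\forall\e>0,\qquad \a_k^n=\int_X u_k^n\,d\om,
\]
and analogously $\a_0^n=\int_X u_0^n\,d\om$. Because $\om$ is finitely additive and $\{0,1\}$-valued, the intersection of finitely many sets of $\om$-measure $1$ again has $\om$-measure $1$. Hence for every $\e>0$ the set $A_{k,\e}=\bigcap_{n=1}^N\{x:|u_k^n(x)-\a_k^n|<\e\}$ satisfies $\om(A_{k,\e})=1$.

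Now $F$ is uniformly continuous on $K$, so for any $\d>0$ one can choose $\e>0$ (independently of $k$) such that $|F(y)-F(z)|<\d$ whenever $y,z\in K$ with $|y_n-z_n|<\e$ for all $n$. On $A_{k,\e}$ this yields $|F(u_k^1,\dots,u_k^N)(x)-F(\a_k^1,\dots,\a_k^N)|<\d$, and since $\d$ was arbitrary the uniqueness clause of Theorem \ref{ans} forces
\[
\int_X F(u_k^1,\dots,u_k^N)\,d\om = F(\a_k^1,\dots,\a_k^N),
\]
and likewise $\int_X F(u_0^1,\dots,u_0^N)\,d\om = F(\a_0^1,\dots,\a_0^N)$.

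To conclude, invoke \eqref{key} for each sequence $u_k^n\wk u_0^n$: $\a_k^n\to\a_0^n$ as $k\to\infty$ for every $n$. Continuity of $F$ gives $F(\a_k^1,\dots,\a_k^N)\to F(\a_0^1,\dots,\a_0^N)$, which is exactly the required pointwise convergence on $\mG$. The main obstacle is the identity displayed above, which is not a standard change of variables but rests essentially on Theorem \ref{ans}'s "$\om$-a.e. constancy" of each $u_k^n$, the $\{0,1\}$-valuedness of $\om$ (so that finite intersections preserve full measure), and uniform continuity of $F$ on the common compact box $K$; once this identity is in hand, the rest is a triviality.
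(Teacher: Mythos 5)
Your proof is correct, but it takes a genuinely different route from the paper's. The paper passes through the isometric Banach-algebra isomorphism $L\colon \Li \to C(\mG,\mathfrak t)$ of Theorem \ref{thm10}: it uses the multiplicativity identity \eqref{uveqn} to get $L[F(u^1,\dots,u^N)](\om)=F(L[u^1](\om),\dots,L[u^N](\om))$ first for polynomials $F$, extends to continuous $F$ by Weierstrass approximation on the compact box, and then invokes the pointwise characterization of weak convergence in $C(Z)$ for compact Hausdorff $Z$. You instead establish the same key identity $\int_X F(u^1,\dots,u^N)\,d\om = F\bigl(\int_X u^1\,d\om,\dots,\int_X u^N\,d\om\bigr)$ directly from Theorem \ref{ans}: each $u^n$ is $\om$-a.e.\ equal to the constant $\a^n$, finite intersections of $\om$-full sets remain $\om$-full because $\om$ is a finitely additive $\{0,1\}$-valued measure, and uniform continuity of $F$ on the box plus the uniqueness clause of Theorem \ref{ans} pins down the value of the integral. (The one point worth making explicit is that the values of the $u_k^n$ lie in the box only $\l$-a.e., but since $\om$ vanishes on $\sN$ this costs nothing.) Your argument buys self-containedness: it needs only Theorem \ref{ans} and the criterion \eqref{key}, bypassing Theorem \ref{thm10}, the multiplicativity \eqref{uveqn} and polynomial approximation altogether — very much in the spirit of the paper's Closing Remark, which observes that \eqref{key} can be reached independently via Rainwater's theorem. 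The paper's route, in exchange, exhibits the composition identity as an instance of the algebra homomorphism property of $u\mapsto L[u](\om)$, which is conceptually tidy if one has already set up the $C(\mG,\mathfrak t)$ machinery.
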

\begin{proof} When $u^n_k \wk u^n_0 $ in $L_\infty(X,\sL,\l)$,  $L[u^n_k] \wk L[u^n_0] $ in $C(\mathfrak{G}, \mathfrak{t})$ and consequently $L[u^n_k](\om) \to L[u^n_0](\om)$ pointwise in $\mathfrak{G}$ as $k \to \infty$.
Therefore, for continuous $F$,  $$F\big(L[u^{1}_k](\om), \cdots , L[u^{N}_k](\om)\big) \to F\big(L[u^{1}_0](\om), \cdots , L[u^{N}_0](\om)\big),~\om \in \mathfrak{G}. $$
If $F$ is a polynomial it follows from \eqref{uveqn} that
$$L[F\big(u^{1}_k, \cdots , u^{N}_k\big)](\om) \to L[F\big(u^{1}_0, \cdots , u^{N}_0\big)](\om),~\om \in \mathfrak{G},
$$
and this holds for continuous $F$, by approximation. Consequently, for continuous $F$,
$$
L[F\big(u^{1}_k, \cdots , u^{N}_k\big)]\wk L[F\big(u^{1}_0, \cdots , u^{N}_0\big)]\text{ in } C(\mathfrak{G}, \mathfrak{t})
$$
and so
$F\big(u^{1}_k, \cdots , u^{N}_k\big) \wk F\big(u^{1}_0, \cdots , u^{N}_0\big)] \text{ in } \Li .
$
\end{proof}
\section{Pointwise and Weak Convergence in $\boldsymbol{\Li}$}\label{wkcgce}
The goal is to characterise weakly null sequences in $\Li$ in terms of their pointwise behaviour, but we begin with some observations on the pointwise behaviour of weakly convergent sequences.
\begin{lemma} \label{iffweak} In $\Li$, $u_k \wk 0$ if and only if $|u_k| \wk 0$.
\end{lemma}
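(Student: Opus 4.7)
The plan is to reduce the claim directly to the pointwise characterization of weak convergence on $\mG$ given in \eqref{key}, using the fact that for a single $u \in \Li$ and a single $\om \in \mG$, Theorem \ref{ans} ties $\int_X u\,d\om$ and $\int_X |u|\,d\om$ to the same constant $\a$.

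\textbf{Step 1.} First I would record that both weak nullity hypotheses automatically force the norm bound $\sup_k \|u_k\|_\infty < \infty$ via the uniform boundedness principle (and $\|\,|u_k|\,\|_\infty = \|u_k\|_\infty$), so the boundedness half of criterion \eqref{key} is symmetric between the two statements. Hence it suffices to address the pointwise-on-$\mG$ half of \eqref{key}.

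\textbf{Step 2.} Fix $\om \in \mG$ and $k \in \NN$. By Theorem \ref{ans} applied to $u_k$, there is a unique scalar $\a_k(\om) \in [-\|u_k\|_\infty,\|u_k\|_\infty]$ with
\begin{equation*}
\int_X u_k\,d\om \;=\; \a_k(\om) \qquad\text{and}\qquad \int_X |u_k|\,d\om \;=\; |\a_k(\om)|.
\end{equation*}
Consequently $\int_X u_k\,d\om \to 0$ as $k \to \infty$ if and only if $\a_k(\om) \to 0$, which holds if and only if $|\a_k(\om)| \to 0$, i.e.\ $\int_X |u_k|\,d\om \to 0$.

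\textbf{Step 3.} Since this equivalence holds at every $\om \in \mG$, invoking the criterion \eqref{key} in both directions yields $u_k \wk 0$ in $\Li$ iff $|u_k| \wk 0$ in $\Li$.

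There is no real obstacle here: the content of the lemma is essentially Theorem \ref{ans}, which already encodes the fact that every element of $\Li$ looks like a constant under an $\om \in \mG$. The only minor point to note explicitly is that $\int_X |u_k|\,d\om = |\a_k(\om)|$ rather than something unrelated to $\int_X u_k\,d\om$; this is precisely the second identity in \eqref{u2}.
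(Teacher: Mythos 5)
Your proof is correct, but it is not the route the paper takes for this lemma. The paper splits the two implications: the ``only if'' direction is deduced from Theorem \ref{compositions} (weak sequential continuity of $u \mapsto F(u)$ for continuous $F$, applied with $F(t)=|t|$, which in turn rests on the isomorphism with $C(\mG,\mathfrak t)$, the multiplicativity \eqref{uveqn} and polynomial approximation), while the ``if'' direction uses \eqref{key} together with the decomposition $u_k = u_k^+ - u_k^-$ and the bounds $0 \leq u_k^\pm \leq |u_k|$, $\om \geq 0$. You instead treat both directions symmetrically by combining \eqref{key} with the identity $\int_X |u_k|\,d\om = \bigl|\int_X u_k\,d\om\bigr|$ from \eqref{u2} of Theorem \ref{ans}, which makes the pointwise-on-$\mG$ condition for $\{u_k\}$ literally equivalent to that for $\{|u_k|\}$. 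This is more economical: it dispenses with Theorem \ref{compositions} altogether, and in fact the paper's own Closing Remark observes exactly this, noting that Theorem \ref{ans} yields the lemma directly so that Theorems \ref{thm10} and \ref{compositions} are not needed for the main results. What the paper's choice buys is a demonstration of the composition-operator theorem in action; what yours buys is a shorter, self-contained argument. Your Step 1 is fine but slightly more than is needed: since $\|\,|u_k|\,\|_\infty = \|u_k\|_\infty$, the boundedness half of \eqref{key} is verbatim the same condition for both sequences, with no appeal to uniform boundedness required.
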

\begin{proof} `Only if' follows from Theorem \ref{compositions} and `if' is a consequence of \eqref{key} since $u_k = u_k^+ - u_k^-$, $0\leq u_k\pm\leq |u_k|$ and $\om \geq 0$.
\end{proof}
\begin{lemma} If  $(X,\sL,\l)$  is $\sigma$-finite and $\{u_k\}$ is weakly null,
 there is  a subsequence $\{u_{k_j}\}$ with $u_{k_j}(x) \to 0$  $\l$-almost everywhere on $X$.
\end{lemma}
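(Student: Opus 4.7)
The plan is to reduce to the nonnegative case, use that $L_1(X,\sL,\l)$ embeds into $L_\infty(X,\sL,\l)^*$, then extract pointwise convergent subsequences on sets of finite measure and diagonalize.

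First I would apply Lemma \ref{iffweak} to replace $\{u_k\}$ by $\{|u_k|\}$: since $u_{k_j}(x) \to 0$ if and only if $|u_{k_j}(x)| \to 0$, any subsequence of $\{|u_k|\}$ that converges pointwise a.e. to $0$ gives the desired subsequence of $\{u_k\}$. So I may assume $u_k \geq 0$.

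Next, every $g \in L_1(X,\sL,\l)$ induces a countably additive measure $g\l$ on $\sL$ which vanishes on $\sN$ and has finite total variation, so by Theorem \ref{yhthm} the functional $u \mapsto \int_X u g\,d\l$ lies in $L_\infty(X,\sL,\l)^*$. Consequently $u_k \wk 0$ implies $\int_X u_k g\,d\l \to 0$ for every $g \in L_1(X,\sL,\l)$. Writing $X = \bigcup_n X_n$ with $\l(X_n) < \infty$ (using $\sigma$-finiteness, and taking the $X_n$ nested upward), the choice $g = \chi_{X_n}$ yields $\int_{X_n} u_k\,d\l \to 0$; combined with $u_k \geq 0$, this is precisely convergence of $\{u_k|_{X_n}\}$ to $0$ in $L_1(X_n, \sL, \l)$. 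By the standard fact that $L_1$-convergence passes to pointwise a.e.\ convergence along a subsequence, there is a subsequence of $\{u_k\}$ converging pointwise to $0$ $\l$-a.e.\ on $X_n$.

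Finally I would run the usual diagonal extraction: construct nested subsequences $\{u_{k^{(1)}_j}\} \supset \{u_{k^{(2)}_j}\} \supset \cdots$ with the $n$-th one converging pointwise to $0$ $\l$-a.e.\ on $X_n$. The diagonal sequence $\{u_{k^{(j)}_j}\}$ is eventually a subsequence of each $\{u_{k^{(n)}_j}\}$, hence converges pointwise to $0$ $\l$-a.e.\ on every $X_n$, and therefore $\l$-a.e.\ on $X = \bigcup_n X_n$.

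There is no substantive obstacle here; the argument is almost entirely bookkeeping. The only conceptual point worth stressing is that the full strength of weak convergence in $L_\infty$ is not needed at this stage: weak-$*$ convergence against $L_1$ (which is subsumed by weak convergence, via the countably additive part of the Yosida--Hewitt decomposition from Theorem \ref{decomposition}) together with $u_k \geq 0$ already forces $L_1$-convergence on finite-measure pieces, and from there the a.e.\ subsequence is automatic.
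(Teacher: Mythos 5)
Your argument is correct and follows essentially the same route as the paper: reduce to $\{|u_k|\}$ via Lemma \ref{iffweak}, test against elements of $L_1(X,\sL,\l)$ viewed inside $L_\infty(X,\sL,\l)^*$, and pass from $L_1$-convergence to an a.e.\ convergent subsequence. The paper merely shortcuts your diagonal extraction by choosing a single $f \in L_1(X,\sL,\l)$ that is strictly positive $\l$-a.e.\ (which $\sigma$-finiteness provides), so that $|u_k|f \to 0$ in $L_1(X,\sL,\l)$ yields one subsequence with $|u_{k_j}|f \to 0$ a.e., hence $|u_{k_j}| \to 0$ a.e.\ on all of $X$ in a single step.
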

\begin{proof} Since $(X,\sL,\l)$  is $\sigma$-finite there exists $f \in L_1(X,\sL,\l)$  which is positive almost everywhere. Since  $|u_k|f \to 0$ in $L_1(X,\sL,\l)$, there is a subsequence with $|u_{k_j}(x)|\to 0$  for $\l$-almost all $x \in X$.
\end{proof}
\begin{lemma}\label{ap1}
Suppose that $(X,\rho)$ is a metric space on which $\l$ is a regular Borel measure with the property that for all locally integrable functions $f$ and balls $B(x,r)$ centred at $x$ and radius $r$,
\begin{equation}\label{hyp}\lim_{0<r \to 0}\fint_{B(x,r)} f d\l = f(x) \text{ for $\l$-almost all }x \in X
\text{ where }
\fint_{B(x,r  )} f d\l: = \frac {1}{\l(B(x,r))}\int f\,d\l.
\end{equation}
Then $u_k \wk u_0$ in $\Li$ implies that $u_k(x) \to u_0(x)$ pointwise $\l$-almost everywhere.
\end{lemma}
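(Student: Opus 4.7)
My plan is to promote the weak convergence $u_k\wk u_0$ in $\Li$ to pointwise convergence by exhibiting, at $\l$-almost every $x\in X$, a single $\om_x\in\mG$ for which $\int_X u_k\,d\om_x = u_k(x)$ simultaneously for every $k\geq 0$. Once such an $\om_x$ is in hand, the characterisation (\ref{key}) immediately delivers $u_k(x)=\int_X u_k\,d\om_x\to\int_X u_0\,d\om_x=u_0(x)$.

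The first step is to upgrade the hypothesis (\ref{hyp}) to the full Lebesgue-point property: for every $u\in\Li$ and $\l$-a.e.\ $x$,
\[\lim_{r\to 0}\fint_{B(x,r)}|u(y)-u(x)|\,d\l(y)=0.\]
This I would obtain by applying (\ref{hyp}) to the locally integrable functions $y\mapsto|u(y)-c_n|$ along a countable dense sequence $\{c_n\}\subset\RR$ and then approximating $c=u(x)$ via the uniform estimate $\big||u-c|-|u-c_n|\big|\leq|c-c_n|$. A direct consequence is that at any Lebesgue point $x$ of $u$ the sub-level set $V(x,\e):=\{y:|u(y)-u(x)|\leq\e\}$ has relative density $1$ in $B(x,r)$ as $r\to 0$; in particular $\l(B(x,r)\cap V(x,\e))>0$ for all sufficiently small $r>0$.

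Next, discard a single $\l$-null set $N$ outside of which each of $u_0,u_1,u_2,\ldots$ is a Lebesgue point (a countable union of null sets), and fix $x\notin N$. Set
\[\sE_x=\{B(x,r):r>0\}\cup\{V_k(x,\e):k\geq 0,\,\e>0\},\qquad V_k(x,\e):=\{y:|u_k(y)-u_k(x)|\leq\e\}.\]
Every finite intersection of elements of $\sE_x$ has the form $B(x,r_*)\cap V_{k_1}(x,\e_1)\cap\cdots\cap V_{k_m}(x,\e_m)$ with $r_*>0$; shrinking to an auxiliary $r'\leq r_*$, the union bound on the relative densities of the complements $V_{k_i}(x,\e_i)^c$ inside $B(x,r')$ is strictly below $1$, so $B(x,r')\cap\bigcap_i V_{k_i}(x,\e_i)$ and hence the original intersection have positive $\l$-measure. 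Theorem \ref{thm1*} then furnishes $\om_x\in\mG$ with $\om_x(E)=1$ for every $E\in\sE_x$.

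To conclude, fix $k$. From $V_k(x,\e/2)\subset\{y:|u_k(y)-u_k(x)|<\e\}$ and $\om_x(V_k(x,\e/2))=1$ we deduce $\om_x(\{y:|u_k(y)-u_k(x)|<\e\})=1$ for every $\e>0$. The uniqueness clause of Theorem \ref{ans} then identifies $\int_X u_k\,d\om_x=u_k(x)$ for every $k\geq 0$, and (\ref{key}) closes the argument. The main obstacle is the bootstrap from the differentiation hypothesis (\ref{hyp}) to the Lebesgue-point property, combined with the verification of the finite-intersection property for $\sE_x$ \emph{uniformly} across the whole sequence $\{u_k\}$, so that a single $\om_x$ represents $x$ for every member of the sequence at one and the same $\l$-a.e.\ point.
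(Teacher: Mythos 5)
Your proof is correct, but it takes a genuinely different route from the paper's. The paper's argument is a soft Hahn--Banach one: for $x$ in the common differentiation set it defines the evaluation functional $\ell_x(u)=\lim_{r\to 0}\fint_{B(x,r)}u\,d\l$ on the linear span $V$ of $\{u_k:k\geq 0\}$, notes $|\ell_x(u)|\leq\|u\|_\infty$ directly from \eqref{hyp}, extends to $L_x\in\Li^*$ by Hahn--Banach, and reads off $u_k(x)=L_x(u_k)\to L_x(u_0)=u_0(x)$. This needs nothing beyond \eqref{hyp} as literally stated, and in particular no density information about sub-level sets. You instead build, at almost every $x$, an explicit $\om_x\in\mG$ representing point evaluation for the whole sequence at once, via the finite-intersection property of balls and sub-level sets together with Theorem \ref{thm1*} and the uniqueness clause of Theorem \ref{ans}. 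The price is the bootstrap from \eqref{hyp} to the genuine Lebesgue-point property $\fint_{B(x,r)}|u-u(x)|\,d\l\to 0$ (your countable dense set argument is the standard and correct way to get it), which is strictly more than the hypothesis states but is needed to make the sub-level sets $V_k(x,\e)$ have relative density one; the payoff is that your extending functional is an extreme point of the unit ball, i.e.\ lies in $\mG$ (indeed in $\mG(x)$ in the topological setting of Section \ref{topsection}), which meshes with the paper's localization theme and shows that only the $\mG$-functionals of \eqref{key} are ever needed, whereas a generic Hahn--Banach extension $L_x$ need not lie in $\mG$. Both arguments tacitly use $0<\l(B(x,r))<\infty$ for a.e.\ $x$ and all small $r$, which is implicit in \eqref{hyp}.
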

\begin{remark}\label{r2}
From \cite[Ch. 1]{heinonen},  \eqref{hyp} holds in particular when $\l$
is a doubling  measure on $(X,\rho)$ (i.e. there exists a
constant $C$ such that $\l(B(x,2r)) \leq C \l (B(x,r))$, or on $\RR^n$ with the standard metric
when $\l$ is any Radon measure (i.e. $\l$ is finite on compact sets).\qed
\end{remark}
\begin{proof}
By hypothesis, for $u \in \Li$ there exists a set $E(u) \in \sB$ with $\l(X\setminus E(u)) = 0$ and
\begin{align}\label{leb1}
 u(x):&=\lim_{0<r \to 0}\fint_{B(x,r)} u\,d\l \text{ for all $x \in E(u)$. }
\end{align}

Now for $u_k \rightharpoonup u_0$ the set $E = \cap_0^\infty E(u_k)$ has full measure. Let $V$ denote the subspace  of $\Li$ spanned by $\{u_k: k \geq 0\}$ and
 for fixed $x \in E$  define a linear functional $\ell_x$ on $V$  by $\ell_x(u) = u(x)$. Then
$$|\ell_x(u)| = |u(x)| = \left|\lim_{0<r \to 0} \fint_{B(x,r)} u\,d \l\right| \leq \|u\|_\infty, ~ u \in V,$$ and, by the
Hahn-Banach Theorem, there exists $L_x \in \Li^*$ with $L_x(u) = \ell_x(u)$ for all $u \in V$. Therefore since $u_k \rightharpoonup u_0$,
$$ u_k(x) = \ell_x(u_k) =   L_x(u_k) \to L_x(u_0) = \ell(u_0) = u_0(x) \forall x \in E.
$$
Hence $u_k \rightharpoonup u_0$ in $\Li$ implies
$u_k(x) \to u_0(x) \text{ for almost all  } x \in X$.
\end{proof}

\begin{remark}\label{app3}
By contrast there follows an example where $\{\|u_k\|_\infty\}$ is bounded,   $u_k$ is continuous except at one point and $u_k(x) \to 0$ everywhere as $k \to \infty$, but $u_k \not\wk 0$ in $\Li$.
Let $X = (-1,1)$, for each $k>2$ let $u_k(0) = 0, ~ u_k(x)=0$ when $|x|\geq 2/k$, $u_k(x) = 1$ if $0<|x|\leq 1/k$, and linear elsewhere. Now in Theorem \ref{thm1*} let $E_\ell = (-1/2\ell,0)\cup (0, 1/2\ell)$ for each $\ell$ and  let $\om$ be a finitely additive measure that takes the value 1 on $E_\ell$ for all $\ell$. Then $\om \in \mG$ and, by Theorem \ref{ans}, $\int_X u_k \,d\om = 1 \forall k$. So $u_k \not\wk 0$, yet
 it is clear that $u_k (x) \to 0$ for all $x \in X$. \qed \end{remark}

By a well-known result of Mazur,  $y_k \wk y$ in a normed linear space  implies, for any  strictly increasing sequence $\{k_j\}$ in $\NN$, that  some $\{\ol y_i\}$ in the convex hull of $\{y_{k_j}:j \in \NN\}$ converges strongly  to $y$. Hence if $u_k \wk 0$ in $\Li$, by
Lemma \ref{iffweak} there exists $\{\ol u_i\}$  in the convex hull of $\{|u_{k_j}|:j\in\NN\}$ with
$$ \ol u_i \to 0 \text{ as }i \to \infty\text{ and, } \forall i,~ \ol u_i = \sum_{j=1}^{m_i} \gamma^i_j |u_{k_j}|, \quad \gamma^i_j \in [0,1] \text{ and }  \sum_{j=1}^{m_i} \gamma^i_j = 1 , \text{ for some } m_i \in \NN.
$$
Since $\gamma^i_j$ may be zero there is no loss in assuming that $\{m_i\}$ is increasing. Therefore, for a strictly increasing sequence $\{k_j\}$ in $\NN$,
$$ 0 \leq w_i(x): = \inf\left\{|u_{k_j}(x)|: j \in \{1,\cdots, m_i\}\right\} \leq \ol u_i(x), ~ x \in X,
$$
defines a non-increasing sequence  in $\Li$  with $\|w_i\|_\infty \to 0$. It follows that if $u_k \wk 0$
\begin{equation}\label{vj} v_J(x) =  \inf\left\{|u_{k_j}(x)|: j \in \{1,\cdots, J\}\right\}
\end{equation}
is a non-increasing sequence  in $\Li$  with $\|v_J\|_\infty \to 0$ as $J \to \infty$.
We now show that a sequence is weakly null in $\Li$ if and only if every sequence $\{v_J\}$, defined as above in terms of a strictly increasing $\{k_j\}$,  converges strongly to 0 in $\Li$. To do so, for
 $u \in \Li$ and $\a >0$, let
$$A_\a(u)= \{x\in X: |u(x)| >\a\}.
$$
\begin{theorem} \label{thmIFF} A bounded sequence $\{u_k\}$ in $\Li$ converges weakly to zero if and only if for every $\a>0$ and every strictly increasing sequence $\{k_j\}$ in $\NN$ there exists $J\in \NN$ with the property that
\begin{equation}\label{IFF}
\l \left\{ \cap_{j=1}^J A_\a(u_{k_j})\right\} = 0.
\end{equation}
This criterion  is equivalent to saying that for every strictly increasing sequence $\{k_j\}$ in $\NN$ the corresponding sequence $\{v_J\}$ in \eqref{vj}
converges strongly to zero in $\Li$.
\end{theorem}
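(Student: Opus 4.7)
The plan is to prove three claims: first, that the two conditions in the statement are equivalent; second, that weak nullity implies \eqref{IFF}, essentially a reprise of the Mazur argument in the paragraph preceding the theorem; third, that \eqref{IFF} implies weak nullity, which is the main content.

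For the equivalence, $v_J \geq 0$ is non-increasing in $J$, so $\|v_J\|_\infty$ is non-increasing, and by the definition of the essential supremum $\|v_J\|_\infty \leq \alpha$ iff $\l(\{x: v_J(x) > \alpha\}) = 0$ iff $\l(\cap_{j=1}^J A_\alpha(u_{k_j})) = 0$, by \eqref{vj}. Thus $\|v_J\|_\infty \to 0$ iff for every $\alpha > 0$ some $J$ realises \eqref{IFF}. For necessity, suppose $u_k \wk 0$ and fix any strictly increasing $\{k_j\}$. Lemma \ref{iffweak} gives $|u_k| \wk 0$, and Mazur's theorem yields convex combinations $\ol u_i$ of $\{|u_{k_j}|\}$ with $\|\ol u_i\|_\infty \to 0$, exactly as in the preceding paragraph. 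Since $v_{m_i} \leq \ol u_i$ pointwise, $\|v_{m_i}\|_\infty \to 0$, and monotonicity of $\|v_J\|_\infty$ in $J$ delivers $\|v_J\|_\infty \to 0$.

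For sufficiency, I argue by contrapositive. Suppose $u_k \not\wk 0$. By \eqref{key} there exist $\om \in \mG$, a number $\gamma \neq 0$, and a strictly increasing $\{k_j\}$ with $\int_X u_{k_j}\,d\om \to \gamma$. Theorem \ref{ans} identifies $\int u_{k_j}\,d\om = \alpha_j$ where, for every $\e > 0$, $\om(\{x: |u_{k_j}(x) - \alpha_j| < \e\}) = 1$. Choose $\eta = |\gamma|/4$ and discard finitely many initial indices so that $|\alpha_j| > 2\eta$ for all remaining $j$. Taking $\e = \eta$, the set $\{x: |u_{k_j}(x) - \alpha_j| < \eta\}$ has $\om$-measure $1$ and lies inside $A_\eta(u_{k_j})$, whence $\om(A_\eta(u_{k_j})) = 1$ for each $j$. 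Because $\om \in \mG$ is $\{0,1\}$-valued and finitely additive, the complement of any $\om$-measure-$1$ set has $\om$-measure $0$, and finite subadditivity forces $\om(\cap_{j=1}^J A_\eta(u_{k_j})) = 1$ for every $J$. Since $\om$ vanishes on $\sN$, each such intersection has positive $\l$-measure, contradicting \eqref{IFF} for this subsequence with $\alpha = \eta$.

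The main obstacle is the last step of sufficiency: extracting a $\l$-measure-based conclusion from an integral-level failure of weak convergence. The crucial lever is Theorem \ref{ans}, which upgrades the single integral identity $\int u_{k_j}\,d\om = \alpha_j$ into an $\om$-almost-everywhere constancy, combined with the $\{0,1\}$-valuedness of $\om \in \mG$, which makes intersections of $\om$-measure-$1$ sets stable — two features of the Yosida--Hewitt duality that together bridge the integral side \eqref{key} and the geometric, finite-intersection side \eqref{IFF} of the statement.
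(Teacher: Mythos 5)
Your proof is correct, and for the main (sufficiency) direction it coincides with the paper's: both argue by contrapositive from \eqref{key}, use Theorem \ref{ans} to upgrade the value of $\int_X u_{k_j}\,d\om$ into $\om$-almost-everywhere constancy, and exploit the $\{0,1\}$-valuedness of $\om\in\mG$ together with the fact that $\om$ vanishes on $\sN$ to defeat \eqref{IFF}; your equivalence of \eqref{IFF} with $\|v_J\|_\infty\to 0$ is also the paper's identity $\l\{x:v_J(x)>\a\}=\l\{\cap_{j=1}^J A_\a(u_{k_j})\}$ plus monotonicity. The one genuine divergence is in the necessity direction: you run the Mazur argument (which the paper develops in the paragraph immediately preceding the theorem) and conclude via $0\leq v_{m_i}\leq \ol u_i$, whereas the paper's formal proof instead supposes \eqref{IFF} fails and invokes Theorem \ref{thm1*} to manufacture an $\om\in\mG$ with $\om(A_\a(u_{k_j}))=1$ for all $j$, giving $\int_X|u_{k_j}|\,d\om\geq\a$ directly. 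Both routes are present in the paper; yours keeps that direction soft and valid in any Banach space, while the paper's version makes the two implications symmetric within the Yosida--Hewitt framework at the cost of one extra appeal to the Zorn's-lemma existence theorem for $0$--$1$ measures.
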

\begin{proof}
Suppose, for a strictly increasing sequence $\{k_j\}$ and $\a >0$ , that \eqref{IFF} is false for all $J \in \NN$. Then $\sE = \{A_\a(u_{k_j}): j \in \NN\}$ satisfies the hypothesis of Theorem \ref{thm1*}. Hence there exists $\om \in \mG$ such that $\om(A_\a(u_{k_j})) = 1$ for all $j$. It follows that
$$\int_X|u_{k_j}|d\om \geq \int_{A_\a(u_{k_j})}|u_{k_j}|d\om \geq \a >0 \forall j.
$$
Hence $|u_k| \not\wk 0$ by \eqref{key} and so, by Lemma \ref{iffweak}, $u_k \not\wk 0$.

Conversely suppose $u_k \not \wk 0$. Then by Lemma \ref{iffweak} and \eqref{key}, there exists $\a>0$, a strictly increasing sequence $\{k_j\}\subset \NN$ and $\om \in \mG$ such that
$$\int_X|u_{k_j}| d\om =:\a_j >\a >0\forall j \in \NN.
$$
Since $\a_j -\a >0$, by Theorem \ref{ans},
$$\om\big(\{x:\big||u_{k_j}| - \a_j\big|<\a_j -\a\}\big) =1 \forall j.
$$
Therefore, since $|u_{k_j}| -\a = |u_{k_j}| -\a_j + \a_j -\a$, it follows that
$\om(A_\a(u_{k_j})) = 1 \forall j$. Hence, since $\om$ is a 0-1 measure, by finite additivity
$\om\Big(\cap_{j=1}^J A_\a(u_{k_j})\Big) = 1$ for all $J$. Since $\om \in \mG\subset \Lid$, it follows that \eqref{IFF} is false for all $J$.
Finally note that for a strictly increasing sequence $\{k_j\}$ and $\a>0$,
$$\l\{x: v_J(x)>\a\} = \l\big\{x: |u_{k_j}(x)| >\a \forall j \in \{ 1,\cdots, J\}\big\} = \l\big\{\cap_{j=1}^JA_\a(u_{k_j})\big\}.
$$
Since $v_J(x) \geq v_{J+1}(x) \geq 0$ it follows that $v_J \to 0$ in $\Li$ if and only if \eqref{IFF} holds for every $\a>0$. This completes the proof.
\end{proof}
There follows an analogue of Dini's theorem that on  compact topological spaces monotone,
pointwise convergence of sequences of continuous functions to a continuous function is uniform; equivalently, for bounded monotone sequences  weak and strong convergence coincide.
\begin{corollary} \label{cor3.6} Suppose $\{u_k\}$ is bounded in $\Li$ and $|u_k(x)| \geq |u_{k+1}(x)|$, $k \in \NN$,  for $\l$-almost all $x \in X$. Then $u_k \wk 0$ if and only if $u_k \to 0$ in $\Li$.
\end{corollary}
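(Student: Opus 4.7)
The plan is to deduce this directly from Theorem \ref{thmIFF} by choosing the natural strictly increasing sequence $k_j = j$ and using monotonicity to collapse $v_J$ to $|u_J|$.

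The easy direction is automatic: strong convergence in $\Li$ always implies weak convergence, so if $u_k \to 0$ in $\Li$ then $u_k \wk 0$.

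For the converse, I would assume $u_k \wk 0$ and apply Theorem \ref{thmIFF} with the sequence $k_j = j$. The defining formula \eqref{vj} becomes
$$ v_J(x) = \inf\bigl\{|u_j(x)| : j \in \{1,\dots,J\}\bigr\}. $$
The hypothesis provides, for each $k \in \NN$, a $\l$-null set $N_k$ off which $|u_k(x)| \geq |u_{k+1}(x)|$; setting $N = \bigcup_{k\in\NN} N_k$ gives a single $\l$-null set off which $\{|u_k(x)|\}$ is a genuine (pointwise) non-increasing sequence in $k$. Consequently, for $x \in X \setminus N$, the infimum defining $v_J(x)$ is attained at $j=J$, i.e. $v_J(x) = |u_J(x)|$. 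Since $\l(N)=0$, this equality holds $\l$-almost everywhere, so $\|v_J\|_\infty = \|u_J\|_\infty$. By Theorem \ref{thmIFF} the left-hand side tends to $0$ as $J \to \infty$, hence $u_J \to 0$ strongly in $\Li$.

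There is really no substantive obstacle here; the only bookkeeping point to watch is that the monotonicity assumption is given for each fixed $k$ outside a (possibly $k$-dependent) null set, so one must pass to the countable union to obtain a common exceptional set before asserting that the infimum in \eqref{vj} is realised at $j=J$ almost everywhere. Once that is noted, the argument is a one-line application of Theorem \ref{thmIFF}.
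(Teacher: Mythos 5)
Your proposal is correct and follows essentially the same route as the paper: both reduce to Theorem \ref{thmIFF} by observing that monotonicity forces $v_J$ to coincide with $|u_J|$ (almost everywhere), so that weak nullity yields $\|u_J\|_\infty \to 0$. The only addition is your explicit remark about uniting the $k$-dependent null sets, which the paper leaves implicit.
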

\begin{proof} The monotonicity of $\{|u_k|\}$ implies that $v_J$ coincides with $|u_J|$ in Theorem \ref{thmIFF} and so that $|u_J| \to 0$ in $\Li$ as $J \to \infty$ when $u_k \wk 0$ if  in $\Li$.
The converse is obvious.
\end{proof}

\subsection{Illustrations of Theorem \ref{thmIFF}}\label{S3.1}

(1) In this example $X= (-1,1)$ with  Lebesgue measure,  $u_k$ is  supported in $[-1/2, 1/2]$, $\|u_k\|_\infty = 1$ and $u_k,\,u_k^- \wk 0$,  but $u_k^+ \not\wk 0$ where $u_k^\pm (x) =
u_k(x  \pm 1/2^{k+1})$.
To see this, let
$A_k = [1/2^{k+1}, 1/2^k),~ A_k^\pm = A_k \mp 1/2^{k+1}$, $u_k = \chi_{A_k}$ and  $ u_k^\pm = \chi_{A_k^\pm}$.
Clearly $u_k^\pm(x) = u_k(x \pm 1/2^{k+1})$ and $u _k^+ \not \wk 0$ because  $v_J$, defined in \eqref{vj} by $u_k^+$, is 1  on $ (0, 1/2^{J+1})$. But
since $\{A_k\}$ and $\{A_k^-\}$ are two mutually disjoint families,  in \eqref{vj} $v_J$, defined for any  $\{k_j\}\subset \NN$ by $u_{k_j}$ or $u_{k_j}^-$   , is zero for $J \geq 2$. Hence $u_k  \wk 0$ and $u_k^- \wk 0$.
That $\chi_{A_k}\wk 0$ for a disjoint family  of sets
is used in Remark \ref{necessuf}.\qed

(2)  In $\Li$ let
$
u_k(x) = \sum_{i=1}^\infty \a_i \chi_{A_k^i}, x \in X,
$
where $\Sigma_{i=1}^\infty|\a_i| < \infty$ and, for each $i \in \NN$, $\{A^i_k\}_{k\in \NN}$ is a family of mutually disjoint non-null measurable sets. Then $u_k \wk 0$ in  $\Li$.

To see this, note that for each $x \in X$ and  $i\in \NN$ there exists at most one $k \in \NN$, denoted, if it exists, by $\kappa(x,i)$, such that $x \in A^i_k$ if and only if $k = \kappa(x,i)$. Note also that for $\e>0$ there exists
$I_\e\in \NN$ such that $\Sigma_{I_\e+1}^\infty |\a_i|< \e$. Hence, for  any given $k \in \NN$ and $x \in X$,
$$
|u_k(x)| \leq \sum_{i=1}^{I_\e} |\a_i| \chi_{A_k^i}(x) + \e=
\begin{array}{l}
\sum\limits_{\substack{i \in \{ 1, \cdots, I_\e\}\\\kappa (x,i)=k,\\}}|\a_i| +\e.  \end{array}
$$
Since  $\{\kappa(x,i): i \in \{1,\cdots , I_\e\}\}$ has at most $I_\e$ elements, there exists $k \in \{1,\cdots, I_\e+1\}$ such that $k \neq \kappa(x,i)$ for any $i \in \{1,\cdots, I_\e\}$. Consequently
$\inf\{|u_k(x)|: 1 \leq k \leq I_\e+1\} \leq \e$, independent of $x \in X$.
Since this argument can be repeated with $k \in \NN$ replaced by any strictly increasing subsequence $\{k_j\}$, it follows that $\{v_J\}$ defined in terms of any subsequence in \eqref{vj} has $\|v_J\|_\infty \to 0$ in $\Li$. The weak convergence of $\{u_k\}$ follows. For the special case, take $\a_1 =1$ and $\a_i = 0,\, i \geq 2$.\qed

(3) Let $u:\RR \to \RR$ be essentially bounded and measurable with $|u(x)| \to 0$ as $|x| \to \infty$ and let $u_k(x) = u(x+k)$. Then $u_k \wk 0$ in $\Li$ where $\l$ is Lebesgue measure on $\RR$. To see this, for $\e>0$
suppose that $|u(x)| <\e$ if $|x| > K_\e$. The for any  $\{k_j\} \subset \NN$, $\|v_J\|_\infty < \e $ for all $J \geq K_\e$ where $\{v_J\}$ is defined in terms of $\{u_{k_j}\}$ by \eqref{vj}, and the result follows. \qed

(4) Let $u:\RR \to \RR$ be essentially bounded and measurable with $u(x) \to 0$ as $x \to \infty$ and $u(x) \to 1$ as $x \to -\infty$. Let $u_k(x) = u(x+k)$. Then $u_k(x) \to 0$ as $k \to \infty$ for all $x \in \RR$,
but $u_k$ is not weakly convergent to 0 because of Theorem \ref{thmIFF}. However, in the notation of Definition \ref{weakpwcgce}, $u_k \wk 0$ at every point of $\RR$, but not at the point at infinity in its one-point compactification. \qed

(5) Define $\{u_k\}_{k \in \NN}\subset \Li$ by $ u_k(x) = \sin (1/(kx))$, $x \in X = (0,2\pi)$, with the standard measure $\l$ on the Lebesgue $\sigma$-algebra on $X$.
Clearly $|u_k(x)| \to 0$ as $k \to \infty$ uniformly on $(\e, 2\pi)$ for any $\e \in (0, 2\pi)$. Therefore if  a subsequence $\{u_{k_j}\}$ is weakly convergent, its  weak limit must be zero.

To see that no subsequence of $\{u_k\}$ is  weakly convergent to 0, consider first a strictly increasing sequence $\{k_j\}$ of natural numbers for which there exists a prime power $p^m$ which does not divide $k_j$ for all $j$. Then, for $J \in \NN$ sufficiently large, let
$$
x_J =\left\{\frac{\pi}{p^m} \text{lcm}\,\{k_1,\cdots ,k_J\}\right\}^{-1} \in (0,2\pi),
$$
where $\text{lcm}$ denotes the  least common multiple. Then, since $p^m \nmid  k_j$ and $p$ is prime,
$$
\frac{1}{k_jx_J} = \frac{\text{lcm}\,\{k_1,\cdots ,k_J\}}{p^m k_j}\,\pi \quad\text{ where }\quad \frac{\text{lcm}\,\{k_1,\cdots ,k_J\}}{ k_j}  = r\text{\,mod\,}p^m, \quad r \in \{1,\cdots, p^m-1\},
$$
from which it follows that $|u_{k_j}(x_J)| \geq |\sin (\pi/p^m)| >0$, independent of $J$.
Since, for all $j \in \{1,\cdots, J\}$, $u_{k_j}$ is continuous at $x_J$, it follows that $\|v_J\|_{\Li} \geq |\sin (\pi/p^m)| >0$ for all $J$ sufficiently large.
By Theorem \ref{thmIFF} this shows that $u_{k_j} \not\wk 0$ if $\{k_j\}$ has a subsequence $\{k'_j\}$for which $p^m \nmid k'_j$ for all $j \in \NN$.
Note that if this hypothesis is not satisfied by $\{k_j\}$ for any prime $p$ and $m\in \NN$, then every $K\in \NN$ is a divisor of $k_j$ for all $j$ sufficiently large, how large depending on $K$.
Consequently, if
$u_{k_j} \wk 0$, $\{k_j\}$ has subsequence $\{k'_{j}\}$  with the
property that  $2^{j+2}k'_{j}$ divides $k'_{j+1}$ for all $j$.
In other words $2^{j+2} n_{j}k'_{j} = k'_{j+1}$, $n_{j} \in \NN$, and  $[0, k'_{j+1})$ is a union of $2^{j+2} n_j$ disjoint intervals of length $k'_{j}$.

Now fixed $J \in \NN$, let $m_J$ denote the mid-point of $I_J := [0, k'_J)$,
and let $I_{J-1}:= [m_J-k'_{J-1}, m_J)$, which is a half open interval of length $k'_{J-1}$ to the left of $m_J$.
Then
$$x = r\mod k'_{J} \text{ where } r\in \left[\frac {k'_J}2 \left(1- \frac{1}{2^{J}n_{J-1}}\right), \frac {k'_J}2\right] \forall x \in I_{J-1},$$ since $2^{J+1} n_{J-1}k'_{J-1} = k'_{J}$.
Note that $m_J$, and hence the  end-points of $I_{J-1}$, are   integer multiples of $k'_{J-1}$. Now denote the mid-point of $I_{J-1}$  by $m_{J-1}$  and let $I_{J-2} = [m_{J-1} -k'_{J-2}, m_{J-1})$, the interval
of length $k'_{J-2}$ to the left of $m_{J-1}$. Then $I_{J-2} \subset I_{J-1} \subset I_J$ and
 $$x = r\mod k'_{J-1} \text{ where } r\in \left[\frac {k'_{J-1}}2 \left(1- \frac{1}{2^{J-1}n_{J-2}}\right), \frac {k'_{J-1}}2\right] \forall x \in I_{J-2},$$ since $2^{J} n_{J-2}k'_{J-2} = k'_{J-1}$.
Repeating this construction leads to a nested sequence of intervals, $I_{0} \subset I_1 \subset \cdots  \subset I_{J-1}$ with the property that
 $$x = r\mod k'_{i+1} \text{ where } r\in \left[\frac {k'_{i+1}}2 \left(1- \frac{1}{2^{i+1}n_{i}}\right), \frac {k'_{i+1}}2\right] \forall x \in I_{i}.$$
Now let $x_J = \{m_0 \pi\}^{-1}$ where $m_0 \in I_0$. Since $I_0= \cap_{i=0}^{J-1}I_i$ and  $\half\left(1- \frac{1}{2^{i+1}n_{i}}\right) \geq \frac 14$ for $i \geq 0$,
$$|u_{k'_j}(x_J)| = \sin \left(\frac{m_0}{k'_j}\pi\right) \geq \sin (\pi/4) = \frac{1}{\sqrt 2}, ~j \in \{1,\cdots, J\},$$
 independent of $J \in \NN$. Hence  $\{v_J\}$ defined by \eqref{vj} using the subsequence $\{k'_j\}$ does not converge to 0 in $\Li$. It follows that $u_{k_j} \not\wk 0$ as $j \to \infty$ for any $\{k_j\} \subset \NN$.\qed
\section{ $\boldsymbol{L_\infty^*(X, \sB,\l)}$ when $\boldsymbol{(X, \tau)}$ is a Topological Space}\label{topsection}
This section deals with $L_\infty^*(X,\sB,\l)$ when  $(X,\tau)$  is a locally compact Hausdorff topological space, $\sB$ is the corresponding Borel $\sigma$-algebra and  $\l\geq 0$ is a measure on $\sB$ as described in Section \ref{Linfty*}. In addition here  $\l$ is assumed regular and finite on compact sets.
  In that setting  a regular Borel measure that takes only values 0 or 1 is a Dirac measure concentrated at a point $x_0 \in X$.  As before, $\mG$ is defined by \eqref{mathfrakg}.

\begin{lemma}\label{gammeas} For $\om \in \mG$ there exists a compact set $K \in \sB$ with $\om(K)=1$ if and only if there exists $x_0 \in X$ such that $\om(G) = 1$ for all open sets $G$ with $x_0 \in G$.  For all $\om \in \mG$ there is at most one such $x_0$ and when $(X,\tau)$ is compact there is exactly one such $x_0$.
\end{lemma}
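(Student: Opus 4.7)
The plan is to prove the ``only if'' direction by a compactness/contradiction argument, the ``if'' direction directly via local compactness, and then handle uniqueness using the Hausdorff property.

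For the forward direction, assume $\om(K) = 1$ for some compact $K \in \sB$, and suppose for contradiction no $x_0$ as in the statement exists. Then for every $x \in K$ there is an open neighbourhood $G_x$ of $x$ with $\om(G_x) \neq 1$; since $\om \in \mG$ takes only the values $0$ and $1$, this forces $\om(G_x) = 0$. Because $\om$ is a non-negative finitely additive measure, it is both monotone ($A \subset B \Rightarrow \om(A) \leq \om(B)$) and finitely subadditive. Extracting a finite subcover $G_{x_1}, \ldots, G_{x_n}$ of $K$ and applying subadditivity gives
\[
1 = \om(K) \leq \om\Bigl(\bigcup_{i=1}^n G_{x_i}\Bigr) \leq \sum_{i=1}^n \om(G_{x_i}) = 0,
\]
a contradiction.

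For the reverse direction, suppose $x_0 \in X$ has $\om(G) = 1$ for every open $G$ with $x_0 \in G$. Since $(X,\tau)$ is locally compact Hausdorff, there is an open set $G$ containing $x_0$ whose closure $\ol G$ is compact. Monotonicity of $\om$ then gives $\om(\ol G) \geq \om(G) = 1$, and since $\om \leq \om(X) = 1$ we conclude $\om(\ol G) = 1$, so $K := \ol G$ is the required compact set.

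For uniqueness, suppose $x_0 \neq x_1$ both satisfy the condition. By the Hausdorff property choose disjoint open neighbourhoods $G_0 \ni x_0$ and $G_1 \ni x_1$. Finite additivity yields
\[
\om(G_0 \cup G_1) = \om(G_0) + \om(G_1) = 2,
\]
contradicting $\om(G_0 \cup G_1) \leq \om(X) = 1$. Finally, when $(X,\tau)$ is compact we may take $K = X$ in the forward direction to obtain existence of $x_0$, and uniqueness then yields exactly one such point. The only subtle point is to remember that, although $\om$ is not $\sigma$-additive in general, the monotonicity and finite subadditivity needed above are automatic for any non-negative finitely additive measure on a $\sigma$-algebra, so no appeal to countable additivity is required.
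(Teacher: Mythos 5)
Your proof is correct and follows essentially the same route as the paper: a finite-subcover contradiction for the forward direction, local compactness for the reverse (you argue it directly where the paper phrases it contrapositively, which is immaterial), the Hausdorff separation plus finite additivity for uniqueness, and $K = X$ in the compact case. No gaps.
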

\begin{proof}
Suppose that $\om(K)=1$, $K$ compact, and  the result is false. Then for  $x \in K$ there is an open $G_x$ with $x \in G_x$ and $\om(G_x)=0$. By compactness,  $K \subset \cup_{i=1}^K G_{x_i}$ where $\om (G_{x_i}) = 0,\,1\leq i\leq K$, which implies $\om(K) = 0$. Since this is false, $\om(K)=1$ for compact $K$  implies the existence of $x_0 \in K$ with the required property. Since $X$ is Hausdorff, if there is another  $x_1 \in X$ with this property
 there are open sets with $x_0 \in G_{x_0},\, x_1 \in G_{x_1}$ and  $G_{x_0}\cap G_{x_1} = \emptyset$. But this is impossible because by finite additivity $\om \big(G_{x_0}\cup G_{x_1}\big) = 2$.
Now suppose that $\om(K) = 0$ for all compact sets $K$. By local compactness, for  $x \in X$ there is an open set $G_{x}$ with $x \in G_{x}$ and  its closure $\ol{G_{x}}$ is compact. Since   $\om (G_x) \leq\om(\ol{G_x}) = 0$, there is no $x\in X$ with the required property. Finally, the existence of $x_0$ when $X$ is compact follows because $\om(X) = 1$. This completes the proof. \end{proof}

   Let $(X_\infty, \tau_\infty)$ denote the one-point compactification \cite{kelley} of $(X,\tau)$. Then $X_\infty = X \cup \{x_\infty\}$,  $x_\infty \notin X$ (the ``point at infinity''),  and a subset $G$ of $X_\infty$  is open if either
 $G \subset X$ is open in $X$, or $G = \{x_\infty\} \cup(X \setminus K)$ for some compact  $K \subset X$. Then $(X_\infty,\tau_\infty)$ is a compact Hausdorff topological space  because $(X,\tau)$ is locally compact  Hausdorff, and  $(X,\tau)$ is compact if and only if $\{x_\infty\}$ is an isolated point (open and closed) in $(X_\infty,\tau_\infty)$.
  For $\om \in \mathfrak G$, let $\om_\infty$ be defined on Borel subsets $E$ of $X_\infty$ by $\om_\infty(E) = \om (E\cap X)$. Then $\om_\infty$ is the unique finitely additive measure on $X_\infty$ which takes only values 0 and 1 and  coincides with  $\om$ on Borel sets in $X$.  In this setting Lemma \ref{gammeas} can be
 re-stated:
\begin{lemma}\label{newcor}   Let  $(X,\tau)$ be  a locally compact Hausdorff space and $\om \in \mG$. Then
there exists a unique $x_0 \in X_\infty$ such that $\om_\infty(G) = 1$ for all open sets $G$ in $X_\infty$ with $x_0 \in G$;  $x_0 = x_\infty$ if and only if $\om (K) = 0$ for all compact $K \subset X$ and  $x_0 \in X$ if $(X,\tau)$ is compact.
\end{lemma}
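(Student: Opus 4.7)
The plan is to deduce Lemma \ref{newcor} directly from Lemma \ref{gammeas} by exploiting the explicit description of $(X_\infty,\tau_\infty)$: a set is $\tau_\infty$-open precisely if either it lies in $\tau$, or it has the form $\{x_\infty\}\cup(X\setminus K)$ for some compact $K\subset X$. Existence of $x_0$ will split into the two cases of Lemma \ref{gammeas}, uniqueness will follow from the Hausdorff property of $X_\infty$ together with finite additivity of $\om_\infty$, and the compact case is then a one-line consequence.

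For existence, I first suppose that there is a compact $K\subset X$ with $\om(K)=1$. Lemma \ref{gammeas} then produces $x_0\in X$ such that $\om(G')=1$ for every $G'\in\tau$ with $x_0\in G'$. I would verify $\om_\infty(G)=1$ for every $\tau_\infty$-open $G$ containing $x_0$ by splitting on the two types: if $G\in\tau$, then $G$ itself is such a $G'$ and $\om_\infty(G)=\om(G)=1$; if $G=\{x_\infty\}\cup(X\setminus K')$ with $K'$ compact and $x_0\in G$, then $x_0\in X\setminus K'\in\tau$, so Lemma \ref{gammeas} applied to the $\tau$-open neighbourhood $X\setminus K'$ gives $\om_\infty(G)=\om(X\setminus K')=1$. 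Suppose instead that $\om(K)=0$ for every compact $K\subset X$; I would take $x_0=x_\infty$. Any $\tau_\infty$-open neighbourhood $G$ of $x_\infty$ has the form $\{x_\infty\}\cup(X\setminus K')$ with $K'$ compact, so finite additivity of $\om$ gives $\om_\infty(G)=\om(X\setminus K')=\om(X)-\om(K')=1$. These two cases are mutually exclusive by Lemma \ref{gammeas}, and $x_0=x_\infty$ occurs exactly in the second, which is the stated characterisation.

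Uniqueness in $X_\infty$ and the assertion for compact $X$ are the remaining pieces. If distinct $x_0,x_1\in X_\infty$ both had the property, Hausdorffness of $\tau_\infty$ supplies disjoint open neighbourhoods $G_0, G_1$ on which $\om_\infty$ equals $1$; finite additivity then forces $\om_\infty(G_0\cup G_1)=2>\om_\infty(X_\infty)=1$, a contradiction. If $(X,\tau)$ is compact, then $X$ itself is a compact set with $\om(X)=1$, so the first alternative applies and $x_0\in X$. The only step that takes any real thought is the Case A verification, where one must check the $\tau_\infty$-open neighbourhoods of $x_0$ that include the point at infinity; but this reduces cleanly to Lemma \ref{gammeas} applied to $X\setminus K'$, so no substantive new work is required beyond what has already been done.
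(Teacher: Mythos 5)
Your proposal is correct and follows exactly the route the paper intends: the paper offers no separate proof of Lemma \ref{newcor}, presenting it as an immediate restatement of Lemma \ref{gammeas} via the description of open sets in $(X_\infty,\tau_\infty)$, and your case split (some compact $K$ with $\om(K)=1$ versus none), the reduction of neighbourhoods of the form $\{x_\infty\}\cup(X\setminus K')$ to the $\tau$-open set $X\setminus K'$, and the uniqueness argument by Hausdorffness plus finite additivity are precisely the routine details being left implicit.
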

\subsection{Localization of Weak Convergence in $\boldsymbol{L_\infty(X,\sB,\l)}$}\label{localisation}
By \eqref{11ultra} there is a one-to-one correspondence between $\mG$ and $\Fk$. For  $x_0 \in X_\infty$, let $\mG(x_0) \subset \mG$ denote the set of $\om \in \mG$  for which the conclusions of Lemma \ref{gammeas} holds, and let  $\Fk(x_0)\subset \Fk$  be the corresponding family of ultrafilters. Then, by Lemma \ref{gammeas},
\begin{equation}\label{GF}
\mG= \cup_{x_0 \in X_\infty} \mG(x_0), \qquad \Fk= \cup_{x_0 \in X_\infty} \Fk(x_0),\end{equation}
which leads to the following definition of weak pointwise convergence.
\begin{definition}\label{weakpwcgce}
  $u_k$ converges weakly to $u$ at $x_0 \in X_\infty$ if
$$\int_X u_k\,d\om \to \int_X u \,d\om \forall \om \in \mG(x_0).
$$
\end{definition}
The localized version of Theorem \ref{thmIFF} is immediate.
For $u \in \Li$, $\a >0$ and $E \in \sL$ let $A_\a(u|_E)= \{x\in E: |u(x)| >\a\}.$
\begin{theorem} \label{thmIFFlocal} A bounded sequence $\{u_k\}$ in $L_\infty (X,\sB,\l)$ converges weakly to zero at $x_0 \in X_\infty$  if and only if  for every $\a>0$, every strictly increasing sequence $\{k_j\}$ in $\NN$ and every open $G \subset X_\infty$ with $x_0 \in G$ there exists $J$ with
$\l \big\{ \cap_{j=1}^J A_\a(u_{k_j}|_G)\big\} = 0$. Equivalently, in \eqref{vj}, $v_J \to 0$ in $L_\infty(G, \sB,\l)$.
\end{theorem}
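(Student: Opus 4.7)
The plan is to mirror the proof of Theorem~\ref{thmIFF}, replacing $\mG$ by $\mG(x_0)$ throughout and invoking Lemma~\ref{newcor}, which characterises $\mG(x_0)$ as those $\om\in\mG$ with $\om(G)=1$ for every open $G\subset X_\infty$ containing $x_0$.

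\emph{Necessity.} I would argue by contraposition. If $u_k\not\wk 0$ at $x_0$, then by Definition~\ref{weakpwcgce} there exist $\om\in\mG(x_0)$, $\a>0$ and a strictly increasing $\{k_j\}$ with $\int_X |u_{k_j}|\,d\om\geq\a$ for every $j$, whence Theorem~\ref{ans} forces $\om(A_\a(u_{k_j}))=1$. Since $\om\in\mG(x_0)$, also $\om(G)=1$ for every open $G\ni x_0$, so finite additivity gives $\om\bigl(\cap_{j=1}^J A_\a(u_{k_j}|_G)\bigr)=1$ for every $J$ and every such $G$. Because $\om$ annihilates $\l$-null sets this forces $\l\bigl(\cap_{j=1}^J A_\a(u_{k_j}|_G)\bigr)>0$ for every $J$ and every open $G\ni x_0$, contradicting the stated condition.

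\emph{Sufficiency.} Following the second half of the proof of Theorem~\ref{thmIFF}, I would manufacture from a failure of the condition an $\om\in\mG(x_0)$ witnessing $\int_X|u_{k_j}|\,d\om\geq\a$. After extracting $\a$ and $\{k_j\}$ from the failure, apply Theorem~\ref{thm1*} to the family
\[
\sE \;=\; \{A_\a(u_{k_j}) : j\in\NN\}\,\cup\,\{G\in\tau_\infty : x_0 \in G\}.
\]
For its finite intersection property, any finite subcollection of $\sE$ gives an intersection of the form $\bigl(\cap_{i=1}^m A_\a(u_{k_{j_i}})\bigr)\cap H_0$, where $H_0$ is a finite intersection of open neighborhoods of $x_0$ in $X_\infty$, hence itself an open neighborhood of $x_0$; this equals $\cap_{i=1}^m A_\a(u_{k_{j_i}}|_{H_0})$, which contains $\cap_{j=1}^{j_m} A_\a(u_{k_j}|_{H_0})$, and that has positive $\l$-measure by the failure hypothesis applied to the neighborhood $H_0$. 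Theorem~\ref{thm1*} then delivers $\om\in\mG$ with $\om(A_\a(u_{k_j}))=1$ for every $j$ and $\om(G)=1$ for every open $G\ni x_0$; the second property places $\om$ in $\mG(x_0)$, while Theorem~\ref{ans} gives $\int_X|u_{k_j}|\,d\om\geq\a$, contradicting weak nullity at $x_0$. The equivalence with $v_J\to 0$ in $L_\infty(G,\sB,\l)$ is then the localised form of the final paragraph of the proof of Theorem~\ref{thmIFF}, applied on each open neighborhood $G$ of $x_0$ separately.

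The principal technical point is forcing the $\om$ produced by Theorem~\ref{thm1*} to lie in $\mG(x_0)$ and not merely in $\mG$: this is what compels the inclusion of the whole neighborhood filter of $x_0$ in $\sE$, and what requires the failure of the condition to be available on arbitrarily small open neighborhoods of $x_0$, rather than at a single prescribed $G$.
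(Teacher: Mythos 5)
Your first paragraph (labelled ``Necessity'', though it actually establishes the \emph{if} half: the stated condition implies weak convergence at $x_0$) is sound, modulo the harmless point that from $\int_X|u_{k_j}|\,d\om\geq\a$ you should drop to $\a/2$ before invoking Theorem \ref{ans} to get $\om(A_{\a/2}(u_{k_j}))=1$, since Theorem \ref{ans} needs a strict gap $\a_j>\a$. The genuine gap is in the other half, at the words ``has positive $\l$-measure by the failure hypothesis applied to the neighbourhood $H_0$''. The negation of ``for every $\a$, every $\{k_j\}$ and \emph{every} open $G\ni x_0$ there exists $J$ with $\l\{\cap_{j=1}^J A_\a(u_{k_j}|_G)\}=0$'' supplies one offending triple $(\a,\{k_j\},G_0)$, i.e.\ positivity for all $J$ on a \emph{single} neighbourhood $G_0$; it does not give positivity on the smaller neighbourhood $H_0$ that your finite-intersection check requires. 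Your closing sentence shows you sensed exactly this, but the proof as written does not supply it.

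Moreover the gap cannot be closed for the statement as printed, because that direction is false with the universal quantifier on $G$: take $X=(-1,1)$ with Lebesgue measure, $x_0=0$ and $u_k=\chi_{(1/2,3/4)}$ for every $k$. Every $\om\in\mG(0)$ assigns measure $1$ to $(-1/4,1/4)$, hence measure $0$ to the disjoint set $(1/2,3/4)$, so $\int_X u_k\,d\om=0$ for all $k$ and $u_k\wk 0$ at $0$; yet for $G=(-1,1)$ and $\a=1/2$ one has $\l\{\cap_{j=1}^J A_\a(u_{k_j}|_G)\}=1/4$ for every $J$. The correct localization replaces ``every open $G\ni x_0$'' by ``some open $G\ni x_0$'' (so that its negation asserts positivity for all $J$ and \emph{all} neighbourhoods $G$), and with that reading your construction --- Theorem \ref{thm1*} applied to $\{A_\a(u_{k_j})\}$ together with the whole neighbourhood filter of $x_0$, which is indeed the right device for forcing $\om\in\mG(x_0)$ rather than merely $\om\in\mG$ --- goes through verbatim. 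The paper offers no proof (the result is declared ``immediate'' from Theorem \ref{thmIFF}), and the quantifier slip sits in its statement; your attempt is the natural one, and its only failure point is precisely where the statement itself fails.
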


By analogy with  \eqref{ERange}, for $x_0 \in X_\infty$ the essential range of $u$ at $x_0\in X_\infty$ is defined by
\begin{equation}\label{srux}\sR(u)(x_0) = \left\{\int_X u\, d\om: \om \in \mG(x_0)\right\}.
\end{equation}
As in Corollary \ref{erange}, for open  $G$ with $x_0 \in G$,
\begin{align}\label{localsR}
\sR(u)(x_0)
= \left\{\int_G u\, d\om: \om \in \mG(x_0)\right\} = \left\{\a: \l\{x\in G: |\a - u(x)| < \e\} > 0\forall \e >0\right\}.
\end{align}
Note that $\sR(u)(x_0)$ is closed in $\RR$  because, by \eqref{localsR}, for any  $x_0 \in X$  its complement is open.
It is immediate  from \eqref{key}, Lemmas \ref{gammeas} and \ref{newcor} that $u_k \wk u$ in $L_\infty(X,\sB,\l)$ if and only if for all $x_0 \in X_\infty$
$$\a_k:=\int_X u_k\,d\om \to \int_X u\,d\om =:\a\text{ as } k \to \infty \forall \om \in \mG(x_0),
$$
which is not equivalent to $\a_k \to \a$ when $\a_k \in\sR (u_k)(x_0) $ and  $\a \in \sR (u)(x_0)$ because, possibly,
$$\a_k = \int_X u_k\,d\om_k \text{ and } \a= \int_X u \,d\om, \text{ but }\om_k \neq \om.
$$
However, $\a = \int_X u\,d\om \in\sR(u)(x_0),  ~\om \in \mG(x_0)$, may be thought of as
   a directional limit of $u$ at $x_0$,  the ``direction'' being determined by $\sF(\om)\in \Fk(x_0)$. Then weak convergence in $L_\infty(X,\sB, \l)$ is equivalent to convergence, for each $\sF \in \Fk(x_0)$, of the directional limits  of $u_k$ at $x_0$ to corresponding directional limits of $u$ at $x_0$,  for each $x_0 \in X_\infty$.
Therefore, by Theorem \ref{ans}, $u_k \wk u$ in $L_\infty(X,\sB,\l)$ if and only if for all $x_0 \in X_\infty$  and all $\om \in \mG (x_0)$
\begin{subequations} \label{mult}
\begin{multline}|\a_k -\a|\to 0 \text{ and }  \om\left\{x \in G: |u_k(x) -\a_k| + |u(x)-\a| <\e\right\} =1\\  \forall \e>0
\text{ and all open sets $G\subset X_\infty$ with $x_0 \in G$},
\end{multline}
equivalently
 $u_k \wk u$ in $L_\infty(X,\sB,\l)$ if and only if for all $x_0 \in X_\infty$  and all $\sF \in \Fk (x_0)$,
\begin{multline}|\a_k -\a|\to 0 \text{ and }  \left\{x \in G: |u_k(x) -\a_k| + |u(x)-\a| <\e\right\} \in \sF\\  \forall \e>0
\text{ and all open sets $G\subset X_\infty$ with $x_0 \in G$}.
\end{multline}
\end{subequations}

\begin{remark}\label{necessuf}
It follows that for  $u_k \wk u$ it is \emph{necessary} that for every $x_0 \in X_\infty$ and every $\alpha \in \mathcal{R}(u)(x_0)$ there exist
$\alpha_k \in \mathcal{R}(u_k)(x_0)$ such that $\alpha_k \to \alpha$ as $k \to \infty$ and \emph{sufficient} that for every $x_0 \in X_\infty$
$$
\sup\left\{|\gamma| : \ \gamma \in \mathcal{R}(u_k - u)(x_0)\right\} \to 0 \ \mbox{ as } \ k \to \infty .
$$
As noted earlier,
the necessary condition is not sufficient. To see that the sufficient condition is not necessary,
let $u_k = \chi_{A_k}$ where  $\{A_k\}$    is a sequence of disjoint segments
 centred on the origin 0 of the unit disc $X$ in $\RR^2$.
 Then $\sR(u_k)(0) = \{0,1\}$ but $u_k \wk 0$ by  the last remark in Section \ref{S3.1} (1) or, equivalently, by Section \ref{S3.1} (2)  with $\a_1 = 1$ and $\a_i = 0,~ i \geq 2$. In this example   $\int_X u_k d\om \to 0$, but not uniformly, for every $\om \in \mG(0)$.
\end{remark}

\section{Restriction to $\boldsymbol{C_0(X, \tau)}$ of Elements of $\boldsymbol{L_\infty^*(X, \sB,\l)}$}\label{restriction}
Throughout this section $(X,\tau)$ is a locally compact Hausdorff  topological space and $C_0(X,\tau)$ is the space of real-valued continuous functions $v$ on $X$  with the property that  for all $\e>0$ there exists a compact set $K \subset X$ such that $|v(x)| <\e$ for all $x \in X \setminus K$.
 When endowed with the maximum norm
\begin{equation}\label{ctau}\|v\|_\infty = \max_{x \in X}|v(x)|,\quad v \in C_0(X,\tau),
\end{equation} $C_0(X,\tau)$ is a Banach  space which if $X$ is compact consists of all real-valued continuous functions on $X$.   Let $\nu \in L_\infty^*(X,\sB,\l)$, as in Theorem \ref{yhthm}
define $f \in L_\infty(X, \sB,\l)^*$ by
$$f(u) = \int_X u \,d\nu, ~ u \in L_\infty(X, \sB,\l),
$$
 and let $\hat f$ denote the restriction of $f$ to  $C_0(X,\tau)$. By  the
 Riesz Representation theorem \cite[Thm. 6.19]{rudin} there is a unique  bounded regular Borel measure $ \hat \nu \in \Sigma (\sB)$  corresponding to $\hat f$, and consequently
  \begin{equation}\label{nudefn} \int_X v\, d\nu =  \int_X v\, d\hat\nu \forall v \in C_0(X,\tau).
  \end{equation}
The goal  is to understand how $\hat \nu$ depends on $\nu$ and, since ${\hat \nu}^\pm = \widehat{\nu^\pm}$ (see \eqref{pm}), there is no loss of generality in restricting attention to  non-negative $\nu \in L_\infty^*(X,\sB,\l)$.
 Recall
 \begin{itemize}
 \item[(i)] from the Yosida-Hewitt decomposition \eqref{deca},  $\nu  = \mu +g\l$  where   $\mu \in L_\infty^*(X,\sB,\l)$  is purely finitely additive and
 $g\l$, $g \in L_1(X,\sB,\l)$, is $\sigma$-additive.
  \item[(ii)] from the Lebesgue-Radon-Nikodym Theorem \cite[Thm. 3.8]{folland}, \cite[Thm. 6.10]{rudin}, $\hat \nu = \rho +k\l$ where $\rho$ and $k\l$ are $\sigma$-additive, $k \in L_1(X,\sB,\l)$, and  $\rho$ is singular with respect to $\l$. Thus $\hat \nu$  has a singularity with respect to $\l$ if $\hat \nu (E)\neq 0$ (equivalently $\rho(E) \neq 0$) for some $E \in \sN$, and  $\hat \nu$   is singular if in addition $k = 0$, where \\
\begin{equation} \label {decb}
\int_X v\,d \mu +\int_X vg\,d\l = \int_X v\,d \nu = \int_X v\,d\hat \nu =   \int_X v\,d\rho + \int_X v k\,d\l \forall v \in C_0(X,\tau),
\end{equation}
 \end{itemize}
 where  $\rho \perp \l$ in $\Sigma(\sB)$, $ \mu \perp \l$ in $\Upsilon (\sB)$  (see Remark \ref{absol} for the distinction), and $g,k\in L_1(X,\sB,\l)$.
Valadier was first to note that  the relation between $ \mu$ and $\rho$, and $g$ and $k$  is not straightforward.
\begin{theorem*}[ Valadier  \cite{valadier}]\label{valadier}
When $\l$ is  Lebesgue measure on $[0,1]$ there is a non-negative $\nu \in \Pi (\sB)$ with
$$\int_0^1 v\,d\nu=\int_0^1 v\,d\l  \forall v \in C[0,1].
$$
Thus  in (i), (ii) and \eqref{decb},  $0\neq \mu \in \Pi(\sB)$ and $g=0$ but  $\rho =0$ and $k \equiv 1$, and $\hat \nu$ has no singularity.
\end{theorem*}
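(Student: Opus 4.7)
The plan is to obtain $\nu$ as the Yosida--Hewitt representative of a Hahn--Banach extension to $L_\infty(0,1)$ of the functional $v \mapsto \int_0^1 v\,d\l$ on $C[0,1]$, engineered so that the extension vanishes on characteristic functions of suitable ``large'' Borel sets. First I choose a nested increasing sequence $F_1 \subset F_2\subset\cdots$ of nowhere-dense compact fat-Cantor sets in $[0,1]$ with $\l(F_k)=1-1/k$, and set $U_k:=[0,1]\setminus F_k$, an open dense set with $\l(U_k)=1/k$ and $U_{k+1}\subset U_k$. Define the sublinear functional $p\colon L_\infty(0,1)\to\RR$ by
\begin{equation*}
p(u)=\inf_{k\in\NN}\esssup_{U_k} u.
\end{equation*}
Since $U_k$ is decreasing, $k\mapsto\esssup_{U_k} u$ is non-increasing, and subadditivity and positive homogeneity of $p$ are immediate.

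For $v\in C[0,1]$, openness and density of $U_k$ force $\esssup_{U_k} v=\max_{[0,1]} v$, so $p(v)=\max v\geq \int_0^1 v\,d\l$; thus $\Lambda_0(v):=\int_0^1 v\,d\l$ is dominated by $p$ on $C[0,1]$, and Hahn--Banach supplies an extension $\Lambda\colon L_\infty(0,1)\to\RR$ with $\Lambda\leq p$ everywhere. The domination inequality has three standard consequences: $\Lambda$ is bounded, because $|\Lambda(u)|\leq\max\{p(u),p(-u)\}\leq\|u\|_\infty$; $\Lambda$ is non-negative, because $u\geq 0$ a.e.\ gives $p(-u)\leq 0$ and so $\Lambda(u)\geq -p(-u)\geq 0$; and $\Lambda$ vanishes on $\l$-null functions, because $u=0$ a.e.\ forces $p(u)=p(-u)=0$. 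By Theorem~\ref{yhthm} there is therefore a non-negative $\nu\in L_\infty^*([0,1],\sB,\l)$ with $\Lambda(u)=\int u\,d\nu$, and by construction $\int_0^1 v\,d\nu=\int_0^1 v\,d\l$ for every $v\in C[0,1]$.

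It remains to show $\nu\in\Pi(\sB)$. Because $F_k\subset F_l$ for $k\leq l$, $\chi_{F_k}$ vanishes on $U_l$ for every $l\geq k$, so $p(\chi_{F_k})=0$; hence $\nu(F_k)=\Lambda(\chi_{F_k})\leq 0$, which combined with $\nu\geq 0$ forces $\nu(F_k)=0$ for every $k$. By Theorem~\ref{decomposition} and \eqref{deca} write $\nu=\mu+g\l$ with $\mu\in\Pi(\sB)\cap L_\infty^*([0,1],\sB,\l)$ and $g\in L_1(\l)$, both non-negative. Then $\int_{F_k} g\,d\l\leq\nu(F_k)=0$, so $g=0$ $\l$-a.e.\ on $F_k$; since $\l(\bigcup_k F_k)=\lim_k\l(F_k)=1$, this gives $g\equiv 0$ $\l$-a.e. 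Hence $\nu=\mu\in\Pi(\sB)$, as required.

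The principal obstacle is engineering $p$ to reconcile two competing demands: it must be small enough on $\chi_{F_k}$ to force $\nu(F_k)=0$ (the route to pure finite additivity), yet large enough on $C[0,1]$ to dominate $\int\cdot\,d\l$ (the input for Hahn--Banach). The asymmetry that makes both possible is that the dense open set $U_k$ attains the global maximum of every continuous function, while $\chi_{F_k}$ vanishes identically on $U_l$ for all sufficiently large $l$.
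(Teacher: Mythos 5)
Your proof is correct, and it supplies an argument where the paper offers none: the statement is quoted from Valadier's note and the paper simply cites \cite{valadier} without reproducing a proof, so there is nothing in the text to compare against step by step. Your route --- Hahn--Banach extension of $v\mapsto\int_0^1 v\,d\l$ dominated by the sublinear functional $p(u)=\inf_k\esssup_{U_k}u$ built from complements of nested nowhere-dense closed sets of measure tending to $1$ --- is sound in every step I checked: subadditivity of $p$ holds because the sequence $k\mapsto\esssup_{U_k}u$ is non-increasing, so the infimum is a limit and limits respect the pointwise subadditivity; $p(v)=\max v$ for continuous $v$ because $\{v>\max v-\e\}\cap U_k$ is nonempty open, hence of positive measure; positivity, boundedness and vanishing on null functions of $\Lambda$ follow from $\pm\Lambda(u)\leq p(\pm u)$; and $\nu(F_k)=0$ together with $\l(F_k)\to 1$ kills the $\sigma$-additive part $g\l$ in the Yosida--Hewitt decomposition, using that both summands are non-negative (Theorem \ref{decomposition}). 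Two cosmetic points: the exact value $\l(F_k)=1-1/k$ is irrelevant (only $\l(F_k)\to1$, nestedness, and nowhere-density matter, and nestedness with prescribed measures needs a word of construction, e.g.\ finite unions of fat Cantor sets); and non-triviality of $\nu$ should be recorded explicitly via $\nu([0,1])=\int_0^1 1\,d\l=1$. Your functional $p$ is in effect the essential limit superior along the filter generated by the $U_k$, which is very much the mechanism behind Valadier's original example, so this is best viewed as a clean self-contained reconstruction of the cited result rather than a departure from it.
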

Hensgen independently   observed that the last claim in \cite[Theorem 3.4]{yosidaetal} is false.
\begin{theorem*}[Hensgen \cite{hensgen}]
With $X = (0,1)$ there exists $\nu\in L_\infty^*(X, \sB,\l)$ which is non-zero and not purely finitely additive but
 $\int_0^1 v\, d\nu =0$ for all $v \in C(0,1)$.
\end{theorem*}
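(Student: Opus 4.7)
The plan is to obtain Hensgen's example as a simple corollary of Valadier's theorem, by subtracting Lebesgue measure from the finitely additive measure it produces. Let $\mu$ be the non-negative, purely finitely additive measure on the Borel $\sigma$-algebra of $[0,1]$ supplied by Valadier's theorem, satisfying $\int_0^1 v\,d\mu = \int_0^1 v\,d\lambda$ for every $v \in C[0,1]$. Since $\mu$ vanishes on $\lambda$-null sets, $\mu(\{0\}) = \mu(\{1\}) = 0$, so the restriction of $\mu$ to the Borel $\sigma$-algebra $\mathcal{B}$ of $X=(0,1)$ lies in $L_\infty^*(X,\mathcal{B},\lambda)$. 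Then I would define
$$\nu := \mu - \lambda \in L_\infty^*(X,\mathcal{B},\lambda),$$
where $\lambda$ is viewed as an element of $\Sigma(\mathcal{B}) \cap L_\infty^*(X,\mathcal{B},\lambda)$.

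The verification proceeds in three short steps. First, for any $v \in C_0(X,\tau)$, extending by zero to $\tilde v \in C[0,1]$ gives $\int_X v\,d\nu = \int_0^1 \tilde v\,d\mu - \int_X v\,d\lambda$, and Valadier's identity applied to $\tilde v$ yields $\int_0^1 \tilde v\,d\mu = \int_0^1 \tilde v\,d\lambda = \int_X v\,d\lambda$, so $\int_X v\,d\nu = 0$. Second, $\nu \neq 0$, since if $\mu - \lambda = 0$ then $\mu = \lambda$, but $\mu \in \Pi(\mathcal{B})$ while $\lambda \in \Sigma(\mathcal{B})$ and $\Pi(\mathcal{B})\cap\Sigma(\mathcal{B}) = \{0\}$. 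Third, $\nu$ is not purely finitely additive: by the uniqueness of the Yosida-Hewitt decomposition in Theorem~\ref{decomposition}, the $\sigma$-additive component of $\nu$ is precisely $-\lambda \neq 0$, so $\nu \notin \Pi(\mathcal{B})$.

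The only potentially delicate point is checking that the restriction of Valadier's $\mu$ from $[0,1]$ to $X=(0,1)$ remains purely finitely additive, but this is automatic: any non-zero $\sigma$-additive minorant $\gamma$ of $\mu|_{(0,1)}$ extends by zero on $\{0,1\}$ to a non-zero $\sigma$-additive minorant of $\mu$ on $[0,1]$, contradicting $\mu \in \Pi$. Thus no genuine obstacle arises; the only substantive input is Valadier's construction itself, which is taken as given. In effect, Valadier's theorem already contains Hensgen's, modulo a single subtraction.
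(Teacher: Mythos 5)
Your construction is correct, but it is worth being clear that the paper does not prove this statement at all: both Valadier's and Hensgen's theorems are quoted as results from the literature, and the paper even stresses that Hensgen's observation was made \emph{independently} of Valadier's. What you have done is show that (a version of) Hensgen's example is a one-line corollary of Valadier's: writing $\nu=\mu|_{(0,1)}-\lambda$, the identity $\int v\,d\nu=0$ for $v$ extending continuously to $[0,1]$ follows from Valadier's identity, $\nu\neq 0$ follows from $\Pi(\sB)\cap\Sigma(\sB)=\{0\}$, and the uniqueness in Theorem \ref{decomposition} identifies the $\sigma$-additive part of $\nu$ as $-\lambda\neq 0$, so $\nu\notin\Pi(\sB)$. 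Your check that $\mu|_{(0,1)}$ remains purely finitely additive (extend a $\sigma$-additive minorant by zero) is the right argument, and note that a \emph{signed} $\nu$ is unavoidable here: a non-negative $\nu$ with $\nu(K)=0$ for all compact $K\subset(0,1)$ has vanishing $\sigma$-additive part by Theorem \ref{decomposition} and Lemma \ref{howd}, so it would automatically be purely finitely additive. What your route buys is economy --- the paper's narrative needs only one constructed example rather than two; what it gives up is independence, since Hensgen's original argument is a direct construction that does not pass through Valadier's measure.

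One small point deserves attention. Your argument establishes $\int_0^1 v\,d\nu=0$ precisely for those $v$ that extend continuously to $[0,1]$; this covers $C_0((0,1))$ and hence the restriction $\hat\nu$ of Section \ref{restriction}, which is all the paper uses. But if $C(0,1)$ in the statement is read as \emph{all} bounded continuous functions on $(0,1)$ (for example $v(x)=\sin(1/x)$), Valadier's identity says nothing about $\int v\,d\mu$ for such $v$, and your proof does not cover them. You should either record that your $\nu$ is asserted to vanish only on $C_0((0,1))$, or argue separately for the larger class.
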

Subsequently Abramovich \& Wickstead \cite{abramwick}
provided wide ranging generalizations and recently  Wrobel \cite{wrobel} gave a sufficient condition on   $\nu$  for $\hat \nu$  to be singular with respect to Lebesgue measure on $[0,1]$.
To find a formula for $\hat\nu$ satisfying \eqref{nudefn} for a given non-negative  $\nu \in L_\infty^*(X, \sB,\l)$, and to characterise those $\nu$ for which $\hat \nu$ has a singularity, recall the following version of Urysohn's Lemma.
\begin{lemma} \textnormal{\cite[\S 2.12]{rudin}}
  \label{urysohnlcs}  If   $K \subset G \subset X$ where $K$ is compact and $G$ is open,  there exists a continuous function $f : X \to [0, 1]$ such that $f(K)=1$, $\ol{\{x:f(x) >0\}}\subset G$ is compact, and hence $f \in C_0(X,\tau)$.
\end{lemma}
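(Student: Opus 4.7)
The plan is to follow the classical Urysohn construction, which produces $f$ as a supremum over a nested dyadic family of precompact open sets between $K$ and $G$. The whole argument rests on one non-trivial preliminary step---a sandwich lemma for locally compact Hausdorff spaces---and then a straightforward iteration of that step.

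First I would establish the sandwich lemma: given compact $K \subset G$ with $G$ open, there exists an open $V$ with $\overline{V}$ compact and $K \subset V \subset \overline{V} \subset G$. Since $(X,\tau)$ is locally compact, each $x \in K$ has an open neighborhood $W_x$ with $\overline{W_x}$ compact; finitely many such $W_{x_i}$ cover $K$ and their union $W$ has compact closure. If $\overline{W} \subset G$, take $V = W$; otherwise the set $F := \overline{W} \setminus G$ is compact and disjoint from $K$, so using the Hausdorff property one first separates $F$ from each individual $x \in K$ by disjoint open sets and then a standard compactness argument yields disjoint open $U \supset K$ and $U' \supset F$. Taking $V := U \cap W$ gives the required sandwich, since $\overline{V} \subset \overline{W} \setminus U' \subset G$ and $\overline{V}$ is compact.

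Next I would iterate the sandwich lemma. Enumerate the dyadic rationals in $(0,1)$ and, at each stage, insert a new precompact open set between the already-constructed tighter and looser ones, producing a family $\{V_r\}$ with $K \subset V_r$, $\overline{V_r} \subset G$ compact, and $r < s \Rightarrow \overline{V_s} \subset V_r$. Define
$$
f(x) = \sup\{\, r : x \in V_r \,\}, \qquad \sup \emptyset := 0,
$$
so $f : X \to [0,1]$, $f \equiv 1$ on $K$, and $f \equiv 0$ outside $V_0$.

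Continuity of $f$ follows from the standard Urysohn computation: for $a,b \in \RR$,
$$
\{f > a\} = \bigcup_{r > a} V_r, \qquad \{f < b\} = \bigcup_{r < b}\bigl(X \setminus \overline{V_r}\bigr),
$$
both open thanks to the strict nesting. Finally, $\overline{\{x : f(x) > 0\}} \subset \overline{V_{r_0}}$ for $r_0$ any sufficiently small dyadic, so this closure is compact and sits inside $G$, giving $f \in C_0(X,\tau)$. The main obstacle is the sandwich lemma: in a merely locally compact Hausdorff space one does not get normality for free, and one must invoke local compactness precisely at this step to obtain the precompact separating neighborhood. Once the sandwich is in hand, the dyadic iteration, definition of $f$, and continuity check are the routine Urysohn argument.
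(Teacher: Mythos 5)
The paper offers no proof of this lemma; it is quoted verbatim from Rudin \cite[\S 2.12]{rudin}, so there is nothing internal to compare against. Your argument is the standard one from that source: the precompact ``sandwich'' step (which you correctly identify as the only place local compactness and the Hausdorff separation of a point from a compact set are genuinely needed), followed by the dyadic interpolation and the usual verification that $\{f>a\}$ and $\{f<b\}$ are open. The sandwich lemma as you prove it is correct: $F=\overline{W}\setminus G$ is compact, the point-versus-compact separation gives disjoint open $U\supset K$ and $U'\supset F$, and $\overline{U\cap W}\subset\overline{W}\setminus U'\subset G$.

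There is one small but genuine slip at the end. You index the family by the dyadic rationals in the \emph{open} interval $(0,1)$, under the convention that $r<s$ implies $\overline{V_s}\subset V_r$, so the sets \emph{grow} as $r\to 0$ and there is no smallest index. Then $\{x:f(x)>0\}=\bigcup_{r>0}V_r$ is an increasing union and is \emph{not} contained in $\overline{V_{r_0}}$ for any single small dyadic $r_0$; a point of $V_r$ with $r<r_0$ need not lie in $V_{r_0}$. Your own phrase ``$f\equiv 0$ outside $V_0$'' points to the correct fix, which is what Rudin does: before the dyadic iteration, use the sandwich lemma once to choose an outermost precompact open $V_0$ with $K\subset V_0\subset\overline{V_0}\subset G$, and carry out the whole construction inside $V_0$, so that $\overline{V_r}\subset V_0$ for every $r>0$. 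Then $\overline{\{x:f(x)>0\}}\subset\overline{V_0}$ is compact and contained in $G$, as required. With that anchoring set restored, the proof is complete and is exactly the cited argument.
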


\begin{lemma}\label{howd} Suppose $0\leq \nu \in L_\infty^*(X,\sB,\l)$ and $B \in \sB$. Then $\nu(K) \leq \hat \nu (B) \leq \nu(G)$  for  compact $K$ and open $G$ with $K \subset B \subset G$. Moreover
\begin{equation*}
\nu(K) \leq \hat \nu(K) \text{ and } \hat \nu (G) \leq \nu (G) \text{ for all compact  $K$ and open $G$ in $X$ }
\end{equation*}
and
$\nu (F) \leq \hat \nu (F) +\nu(X) -\hat\nu(X)$  if $F$ is closed.
Thus  $\hat \nu (X) = \nu (X)$ implies that  $\nu(F) \leq \hat \nu(F)$ for all closed sets $F \subset X$. (That $\nu(X) = \hat\nu(X)$ when $(X,\tau)$ is compact  was noted following \eqref{nudefn}.)
\end{lemma}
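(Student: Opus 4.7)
My plan is to establish the two one-sided inequalities $\nu(K) \leq \hat\nu(K)$ for compact $K$ and $\hat\nu(G) \leq \nu(G)$ for open $G$ first, and then to read off the sandwich $\nu(K) \leq \hat\nu(B) \leq \nu(G)$ by monotonicity of $\hat\nu$, with the closed-set statement following by passing to complements.

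First I would fix a compact $K$ and an arbitrary open neighbourhood $G \supset K$. The key tool is Urysohn's Lemma (Lemma \ref{urysohnlcs}), which produces an $f \in C_0(X,\tau)$ with $0 \leq f \leq 1$, $f \equiv 1$ on $K$ and $\ol{\{f > 0\}} \subset G$. This sandwiches $\chi_K \leq f \leq \chi_G$ pointwise, and because $\nu, \hat\nu \geq 0$ and they agree on $f \in C_0(X,\tau)$ via \eqref{nudefn}, I immediately get
\begin{equation*}
\nu(K) = \int_X \chi_K \, d\nu \leq \int_X f \, d\nu = \int_X f \, d\hat\nu \leq \hat\nu(G).
\end{equation*}
Invoking outer regularity of $\hat\nu$ (automatic from Riesz, since $\hat\nu$ is a regular Borel measure) and taking the infimum over such $G$, I obtain $\nu(K) \leq \hat\nu(K)$.

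Next I would handle open $G$ symmetrically. By inner regularity of $\hat\nu$ on open sets, $\hat\nu(G) = \sup\{\hat\nu(K) : K \subset G,\, K\text{ compact}\}$. For each such $K$, Urysohn again produces $f \in C_0(X,\tau)$ with $\chi_K \leq f \leq 1$ and $\ol{\{f > 0\}} \subset G$, so $f \leq \chi_G$ pointwise, and
\begin{equation*}
\hat\nu(K) \leq \int_X f \, d\hat\nu = \int_X f \, d\nu \leq \nu(G).
\end{equation*}
Taking the supremum over $K$ yields $\hat\nu(G) \leq \nu(G)$. The sandwich $\nu(K) \leq \hat\nu(K) \leq \hat\nu(B) \leq \hat\nu(G) \leq \nu(G)$ whenever $K \subset B \subset G$ is then immediate from monotonicity of the $\sigma$-additive measure $\hat\nu$.

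For the closed-set inequality, I would apply the open-set bound to the complement: if $F$ is closed then $X \setminus F$ is open and
\[
\nu(F) = \nu(X) - \nu(X \setminus F) \leq \nu(X) - \hat\nu(X \setminus F) = \hat\nu(F) + \bigl(\nu(X) - \hat\nu(X)\bigr),
\]
which collapses to $\nu(F) \leq \hat\nu(F)$ when $\nu(X) = \hat\nu(X)$ (automatic for compact $X$, where $1 \in C_0(X,\tau)$ gives $\nu(X) = \int_X 1 \, d\nu = \int_X 1 \, d\hat\nu = \hat\nu(X)$). I do not anticipate any serious obstacle; the only thing to keep track of is that the Urysohn function must lie in $C_0(X,\tau)$ so that \eqref{nudefn} applies, which is precisely why the compact-support form of Urysohn in Lemma \ref{urysohnlcs} is tailor-made for this argument.
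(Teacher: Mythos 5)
Your proof is correct and follows essentially the same route as the paper: Urysohn's Lemma sandwiching $\chi_K \leq f \leq \chi_G$, the identity \eqref{nudefn} to transfer between $\nu$ and $\hat\nu$, regularity of $\hat\nu$, and complementation plus finite additivity for the closed-set inequality. The only difference is ordering --- you prove the two one-sided inequalities first and deduce the sandwich by monotonicity, whereas the paper derives the cross-inequalities $\nu(K)\leq\hat\nu(G)$ and $\hat\nu(K)\leq\nu(G)$ from a single Urysohn function, obtains the sandwich via regularity, and reads off the one-sided inequalities as the special cases $B=K$ and $B=G$ --- which is a cosmetic rather than substantive distinction.
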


\begin{proof} For a given Borel set $B$ and $K \subset B \subset G$  as in the statement,
let $f$ be the continuous function determined  in Lemma  \ref{urysohnlcs} by $K$ and $G$. Then
\begin{gather*}
\nu(K) \leq \int_X f\,d\nu \leq \nu (G) \text{ and } \hat\nu(K) \leq \int_X f\,d\hat\nu \leq \hat\nu (G).
\end{gather*}
It follows from \eqref{nudefn} that
$\nu(K) \leq \hat \nu (G)$  and  $\hat \nu (K) \leq \nu (G)$ whence, since $\hat \nu$ is regular \cite[Thm. 6.19]{rudin},  $\nu(K) \leq \hat\nu(B) \leq \nu (G)$. In particular, if $B = K$ is compact, $\nu(K) \leq \hat\nu(K)$, and if $B = G$ is open,
$\hat\nu(G) \leq \nu(G)$. That $\nu (F) \leq \hat \nu (F) +\nu(X) -\hat\nu(X)$  when $F$ is closed  follows by finite additivity since $0\leq \nu(X),\, \hat \nu(X)<\infty$.
\end{proof}

\begin{remark}\label{alexandroff}
 A non-negative finitely additive set function $\nu$ on $\sB$  is said to be regular \cite[III.5.11]{dunford} if for all $E \in \sB$ and $\e>0$ there are sets $F \subset E \subset G$ with $F$ closed, $G$ open and $\nu(G \setminus F) < \e$.
If $X$ is compact and $\nu$ is regular, by a theorem of Alexandroff \cite[III.5.13]{dunford} $\nu$ is $\sigma$-additive and hence $\hat\nu = \nu$. By Lemma \ref{howd},  if $\nu(X) = \hat \nu(X)$  and $F\subset E\subset G$, where $F$ is closed and $G$ is open,
$$\nu(F) \leq \hat \nu (F) \leq \hat \nu(E) \leq \hat \nu(G) \leq \nu(G).
$$
Hence  if $\nu(X) = \hat \nu(X)$ and $\nu\geq 0$ regular implies that  $\nu = \hat \nu$  is $\sigma$-additive on $\sB$.\qed
\end{remark}
\begin{theorem}\label{newsing}  Suppose $K$ is compact, $G$ is open, $K \subset G$ and $0\leq \nu \in L_\infty^*(X,\sB,\l)$. Then for $n \in \NN$ there exist compact  $K_n$ and  open $G_n$ with
\begin{gather*}K  \subset G_n\subset K_n\subset G,\quad G_n \subset G_{n-1},~K_n \subset K_{n-1},\\ \hat\nu (K) \leq \nu(K_n),\quad \hat\nu (G) \geq \nu(G_n)
\text{ and }\l (K_n) < \l(K) + 1/n.\end{gather*}\end{theorem}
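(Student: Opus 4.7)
The strategy is to build $K_n$ and $G_n$ by induction on $n$, working each time inside the previously constructed $G_{n-1}$ (for $n=1$ set $G_0 := G$ so that the nesting requirement is vacuous), and then to read off the two $\nu$/$\hat\nu$ comparisons directly from Lemma \ref{howd}. The regularity of $\l$ and the local compactness of $(X,\tau)$ do all the geometric work; finite additivity of $\nu\ge 0$ does the rest.

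\emph{Construction of the pair $(G_n,K_n)$.} Assume $G_{n-1}$ is open with $K\subset G_{n-1}\subset G$ (true by hypothesis when $n=1$). Since $\l(K)<\infty$, outer regularity of $\l$ yields an open $W_n$ with $K\subset W_n$ and $\l(W_n)<\l(K)+1/n$. Setting $U_n := W_n\cap G_{n-1}$ keeps $U_n$ open with $K\subset U_n\subset G_{n-1}$ and $\l(U_n)<\l(K)+1/n$. Because $(X,\tau)$ is locally compact Hausdorff and $K$ is a compact subset of the open set $U_n$, there exists an open $G_n$ with $K\subset G_n\subset \ol{G_n}\subset U_n$ and $\ol{G_n}$ compact. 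Define $K_n:=\ol{G_n}$. The chain $K\subset G_n\subset K_n\subset U_n\subset G_{n-1}\subset G$ gives simultaneously $K\subset G_n\subset K_n\subset G$ and the nesting $G_n\subset G_{n-1}$, $K_n\subset K_{n-1}$. Monotonicity of $\l$ then gives $\l(K_n)\le \l(U_n)<\l(K)+1/n$.

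\emph{The measure comparisons.} Apply Lemma \ref{howd} with compact set $K$ and open set $G_n$ (so that $K\subset K\subset G_n$) to obtain $\hat\nu(K)\le \nu(G_n)$. Since $\nu\ge 0$ is monotone on $\sB$ and $G_n\subset K_n$, this upgrades to $\hat\nu(K)\le \nu(G_n)\le \nu(K_n)$, giving the first bound. Next, apply Lemma \ref{howd} with compact set $K_n$ and open set $G$ (so that $K_n\subset G\subset G$) to obtain $\nu(K_n)\le \hat\nu(G)$; combining with $\nu(G_n)\le \nu(K_n)$ gives $\nu(G_n)\le \hat\nu(G)$, which is the second bound.

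\emph{Main obstacle.} The only non-formal step is the production of $G_n$ with $\ol{G_n}$ compact and $\ol{G_n}\subset U_n$. This is the standard fact that in a locally compact Hausdorff space every compact set inside an open set sits inside an intermediate open neighbourhood with compact closure: each $x\in K$ has a neighbourhood with compact closure, one shrinks it (using the Hausdorff property) so that its closure lies in $U_n$, and a finite subcover of $K$ has a union with the desired properties. Once this ingredient is available, the inductive construction, the $\l$-bound, the nesting, and the $\nu$/$\hat\nu$ inequalities all follow mechanically from Lemma \ref{howd} and monotonicity.
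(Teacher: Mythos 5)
Your proof is correct and follows essentially the same route as the paper: outer regularity of $\l$ to control $\l(K_n)$, the locally compact Hausdorff interpolation of a relatively compact open set between $K$ and an open superset (the paper packages this as Urysohn's Lemma \ref{urysohnlcs}), and then the two-sided comparison of $\nu$ and $\hat\nu$ from Lemma \ref{howd} together with monotonicity of $\nu\geq 0$. The only cosmetic difference is that you obtain the nesting by induction, intersecting with the previous $G_{n-1}$, whereas the paper takes minima $g_n=\min\{f_k:k\leq n\}$ of the Urysohn functions; the substance is identical.
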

\begin{proof}
Since $\l$ is a regular Borel measure that is finite on compact sets there exist open sets $G^k$ with $K \subset G^k \subset G $ and $\l(G^k) < \l(K) +1/k$ for $k \in \NN$.
By Lemma \ref{urysohnlcs} there exists a continuous function $f_k:X \to [0,1]$ such that $f_k(K) = 1$ and $\ol{\{x: f_k(x)>0\}}$ is a compact subset of $G^k$. For $x \in X$, let $g_n(x) = \min \{ f_k(x): k\leq n\}$ so that $g_n \leq g_{n-1}$, $g_n$ is continuous on $X$,
$g_n(K) = 1$ and $\ol{\{x: g_n(x)>0\}}\subset G^n$ is compact.

Let $G_n= \{x: g_n(x)>0\}$ and $K_n = \ol{\{x: g_n(x)>0\}}$. Then
$K\subset G_n\subset K_n\subset G^n \subset G$
and, by Lemma \ref{howd},
 $$\hat \nu(K) \leq \hat\nu (G_n) \leq  \nu (G_n) \leq  \nu (K_n), \quad \hat\nu(G) \geq \hat\nu(K_n) \geq \nu(K_n) \geq \nu(G_n) ,
  $$
and  $\l (K_n) <\l(K) +1/n$ because $K_n \subset G^n$. Now  $\{G_n\}$ and  $\{K_n\}$ are nested sequences of open and compact sets, respectively, because $g_n(x)$ is decreasing in $n$, with the required properties. \end{proof}
\begin{corollary}\label{hatnuX}
For $G$ open, $K$ compact and $\nu \in L_\infty^*(X,\sB,\l)$ non-negative,
$$\hat \nu(G) = \sup \{\nu( K):   K \subset G, \text{ $K$ compact}\},\quad \hat \nu(K) = \inf \{\nu( G):   K \subset G, \text{ $G$ open}\}.
$$
\end{corollary}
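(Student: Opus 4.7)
The upper bounds follow at once from Lemma \ref{howd}. Taking $B=G$ (open) there gives $\nu(K') \leq \hat\nu(G)$ for every compact $K' \subset G$; taking $B=K$ (compact) gives $\hat\nu(K) \leq \nu(G')$ for every open $G' \supset K$. Passing to supremum and infimum respectively yields
$$\sup\{\nu(K'):\ K' \subset G,\ K'\text{ compact}\} \leq \hat\nu(G),\qquad \hat\nu(K) \leq \inf\{\nu(G'):\ K \subset G',\ G'\text{ open}\}.$$

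For the matching lower bounds I would couple Theorem \ref{newsing} with the inner and outer regularity of the Riesz measure $\hat\nu$ guaranteed by \cite[Thm. 6.19]{rudin} (recall $\hat\nu$ is finite). Fix $\e>0$. By inner regularity choose a compact $K\subset G$ with $\hat\nu(G)-\e<\hat\nu(K)$; Theorem \ref{newsing} applied to the pair $(K,G)$ then produces a compact $K_n$ with $K\subset K_n\subset G$ and $\hat\nu(K)\leq\nu(K_n)$. Thus $K_n$ is a compact subset of $G$ with $\nu(K_n)>\hat\nu(G)-\e$, and letting $\e\to 0$ delivers $\hat\nu(G)\leq\sup\{\nu(K'):\ K'\subset G,\ K'\text{ compact}\}$.

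Symmetrically, outer regularity of $\hat\nu$ gives an open $G\supset K$ with $\hat\nu(G)<\hat\nu(K)+\e$; Theorem \ref{newsing} applied to $(K,G)$ then produces an open $G_n$ with $K\subset G_n\subset G$ and $\nu(G_n)\leq\hat\nu(G)<\hat\nu(K)+\e$. Since $G_n$ is an open neighbourhood of $K$, this forces $\inf\{\nu(G'):\ K\subset G',\ G'\text{ open}\}<\hat\nu(K)+\e$, and letting $\e\to 0$ completes the second identity.

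No genuine obstacle arises here: the argument is essentially bookkeeping, matching the two asymmetric bounds produced by Theorem \ref{newsing} (one through the compact $K_n$, the other through the open $G_n$) with the appropriate direction of regularity of $\hat\nu$, and exploiting the nesting $K\subset G_n\subset K_n\subset G$ to ensure that the intermediate sets witness the correct sup or inf.
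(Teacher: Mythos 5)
Your proposal is correct and follows essentially the same route as the paper: the one-sided inequalities come from Lemma \ref{howd}, and the matching bounds are obtained by combining the inner/outer regularity of $\hat\nu$ with the compact set $K_n$ (respectively the open set $G_n$) supplied by Theorem \ref{newsing}. No substantive difference from the paper's argument.
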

\begin{proof} Let   $G$ be open. Then for any $\e>0$  there exists compact $K_\e\subset G$ with $\hat\nu(K_\e) >\hat\nu(G) -\e$,  since $\hat\nu$ is regular, and   $\nu(K_\e) \leq  \hat \nu (K_\e) \leq \hat\nu(G)$ by Lemma \ref{howd}. Now by Theorem \ref{newsing} there exists compact $K_{1}$ with $K_\e \subset K_1\subset G$ and $\hat\nu(G) \geq  \hat \nu(K_1) \geq \nu(K_1) \geq \hat\nu(K_\e) >\hat\nu(G) -\e$. This establishes the first identity.
Similarly for compact $K$ and  $\e>0$  there exists  open $G_\e$ with $K \subset G_\e$ and $\hat\nu(G_\e) <\hat\nu(K) +\e$, and an open $G_1$ with  $\hat \nu (G_\e) \geq \nu(G_1)$, $K \subset G_1\subset G_\e$, whence $\hat \nu (K) +\e >\hat\nu(G_\e) \geq \nu(G_1)$.
\end{proof}

\begin{corollary}\label{singular1}  For  $0\leq \nu \in L_\infty^*(X,\sB,\l)$, $\hat \nu \in \Sigma (\sB)$ has a singularity if and only if there exists $\a >0$ and a sequence    of compact sets with $\nu(K_n) \geq \a $,  $K_{n+1}\subset K_n$ for  all $n$, and $\l(K_n) \to 0$ as $n \to \infty$.
\end{corollary}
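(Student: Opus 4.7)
The plan is to prove the two directions of this equivalence separately; the necessary machinery is already in place, so each direction will be short.

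For sufficiency, suppose such a nested sequence $\{K_n\}$ of compact sets exists with $\nu(K_n)\geq\a$ and $\l(K_n)\to 0$. Set $K=\bigcap_n K_n$, which is again compact. By Lemma \ref{howd}, $\hat\nu(K_n)\geq\nu(K_n)\geq\a$ for every $n$. Since $\hat\nu\in\Sigma(\sB)$ is a finite Borel measure and the $K_n$ decrease, continuity from above of $\hat\nu$ gives $\hat\nu(K)=\lim_n\hat\nu(K_n)\geq\a>0$. At the same time $\l(K)\leq\l(K_n)\to 0$, so $K\in\sN$. Thus $\hat\nu$ assigns positive mass to a $\l$-null Borel set, which is precisely the definition of $\hat\nu$ having a singularity in the sense of item (ii) preceding \eqref{decb}.

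For necessity, suppose $\hat\nu$ has a singularity, so there exists $E\in\sN$ with $\hat\nu(E)>0$. Because $\hat\nu$ is the finite regular Borel measure produced by Riesz representation in \eqref{nudefn}, it is inner regular on every Borel set, and I can pick a compact $K\subset E$ with $\hat\nu(K)>0$; automatically $\l(K)\leq\l(E)=0$. Using local compactness of $(X,\tau)$ I choose any open $G\supset K$ and apply Theorem \ref{newsing} to the pair $K\subset G$. The theorem delivers compact sets $K_n$ with $K\subset K_n\subset G$, $K_{n+1}\subset K_n$, $\nu(K_n)\geq\hat\nu(K)$, and $\l(K_n)<\l(K)+1/n=1/n$. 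Setting $\a=\hat\nu(K)>0$, this is exactly the sequence required.

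The only conceptual step is the passage, in the necessity direction, from an abstract $\l$-null Borel set carrying positive $\hat\nu$-mass to a compact set with the same property; this is routine once one uses the inner regularity of the Riesz representation measure $\hat\nu$. Once a compact witness of singularity is in hand, Theorem \ref{newsing} does all the heavy lifting, simultaneously producing nested compact approximations with the desired $\nu$-lower bound and $\l$-upper bound. I do not expect any substantive obstacle, and no hypothesis on $(X,\tau)$ beyond local compactness and Hausdorffness is required.
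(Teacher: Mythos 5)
Your proof is correct and follows essentially the same route as the paper: sufficiency via Lemma \ref{howd} together with continuity from above of the $\sigma$-additive measure $\hat\nu$ on the nested compacts, and necessity via inner regularity of $\hat\nu$ to find a compact $\l$-null witness followed by an application of Theorem \ref{newsing}. Your explicit choice of an open $G\supset K$ (e.g.\ $G=X$) before invoking Theorem \ref{newsing} is a detail the paper leaves implicit, but there is no substantive difference.
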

\begin{proof} If $\a >0$ and such a sequence exists, by Lemma \ref{howd}, $\hat \nu (K_n) \geq \a$  for all $n$. Since $\{K_n\}$ is nested and $\hat \nu$ is $\sigma$-additive it follows that $\hat \nu (K) \geq \a$ where $K = \cap_n K_n$. Since $K\in \sN$, because $\lim_{n\to \infty} \l(K_n) = 0$ and $\l$ is
$\sigma$-additive, $\nu$ has a singularity.
Conversely if $\hat\nu\geq 0$ has a singularity there exists $E \in \sN$ and $\a >0$ with $\hat \nu (E) = 2\a$. Since $\hat \nu$ is regular, there exists a compact $K \subset E$ \ with $\hat \nu (K) \geq\a >0$. Now since $\l(K) = 0$ because $K \subset E \in \sN$, the existence of  compact sets with $\nu(K_n) \geq \hat\nu (K) \geq \a $,  $K_{n+1}\subset K_n$ for  all $n$, and $\l(K_n) \to 0$ as $n \to \infty$ follows from Theorem \ref{newsing}.
  \end{proof}
  \begin{theorem}\label{measures}
For  $B \in \sB$ and $0\leq \nu \in L_\infty^*(X,\sB,\l)$,
\begin{equation} \label{bbb}\hat \nu(B) = \inf_{_{\substack{\,G\, \textnormal{open}\\B \subset G}}}\left\{\sup_{_{\substack{\,K\,\textnormal{compact}\\K\subset G}}}\nu(K) \right\}
= \sup_{_{\substack{\,K\, \textnormal{compact}\\K \subset B}}}\left\{\inf_{_{\substack{\,G\,\textnormal{open}\\K\subset G}}}\nu(G) \right\}.
\end{equation}\end{theorem}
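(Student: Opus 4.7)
The plan is to observe that the inner expressions in the minimax formula are exactly $\hat\nu(G)$ and $\hat\nu(K)$ by Corollary \ref{hatnuX}, so that the stated identities reduce to the outer regularity and inner compact regularity of the Borel measure $\hat\nu$, which hold by the Riesz Representation Theorem \cite[Thm. 6.19]{rudin}.

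More concretely, I would proceed in two short steps. First, fix an open set $G \subset X$. Corollary \ref{hatnuX} gives $\hat\nu(G) = \sup\{\nu(K) \colon K \subset G,\ K \text{ compact}\}$, and similarly for compact $K$ we have $\hat\nu(K) = \inf\{\nu(G) \colon K \subset G,\ G \text{ open}\}$. Substituting these identities into \eqref{bbb}, the claim becomes
$$
\hat\nu(B) \;=\; \inf_{\substack{G \text{ open}\\ B \subset G}} \hat\nu(G) \;=\; \sup_{\substack{K \text{ compact}\\ K \subset B}} \hat\nu(K) ,
$$
for every $B \in \sB$.

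Second, I would invoke the regularity of $\hat\nu$. Since $\hat\nu$ is the unique bounded regular Borel measure representing $\hat f$ via \eqref{nudefn}, both equalities above are precisely the statements of outer regularity (on all Borel sets) and of inner compact regularity; these are built into the Riesz Representation Theorem cited at the start of this section. This closes the proof.

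The only place where a subtlety could arise is checking that the substitution of Corollary \ref{hatnuX} into the right-hand sides of \eqref{bbb} really yields the two regularity formulas; this amounts to verifying that enlarging the class of $K$'s (resp.\ shrinking the class of $G$'s) in the inner extremum does not change its value, which is immediate because every compact $K \subset G$ is admissible in the outer $\sup$, and every open $G \supset K$ is admissible in the outer $\inf$. Apart from this bookkeeping the proof is essentially a one-line consequence of Corollary \ref{hatnuX} and the regularity of $\hat\nu$, so I do not expect any serious obstacle.
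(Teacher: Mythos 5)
Your proposal is correct and is essentially identical to the paper's own proof, which reads in full: ``This follows from Corollary \ref{hatnuX} since $\hat \nu$ is a regular Borel measure.'' You have simply spelled out the substitution and the appeal to outer and inner (compact) regularity of the bounded measure $\hat\nu$, which is exactly the intended argument.
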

\begin{proof}
This follows from Corollary \ref{hatnuX} since $\hat \nu$ is a regular Borel measure.
\end{proof}
\begin{corollary}\label{surprise} (a) For  $\om \in \mG$,
 either $\hat\om$ is zero or $\hat \om$ is a Dirac measure.
(b) Both possibilities in (a) may occur when $(X,\tau)$ is not compact.
(c) If   $\hat\om = \d_{x_0} \in \mathfrak D$, then $\om \in \mG(x_0)$.
\end{corollary}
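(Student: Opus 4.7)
The plan is as follows. For part (a), I would read off the conclusion directly from the minimax formula of Theorem \ref{measures}. Since $\om \in \mG$ takes only the values $0$ and $1$, every inner supremum $\sup\{\om(K):K\subset G \text{ compact}\}$ lies in $\{0,1\}$, and hence so does the outer infimum; thus $\hat\om(B)\in\{0,1\}$ for every $B\in\sB$. If $\hat\om(X)=0$ we are done. Otherwise $\hat\om(X)=1$, and I would extract a point of concentration by the compactness argument used in the proof of Lemma \ref{gammeas}: inner regularity of the Riesz measure $\hat\om$ yields a compact $K$ with $\hat\om(K)=1$; if every $x\in K$ admitted an open neighbourhood $G_x$ with $\hat\om(G_x)=0$, a finite subcover would force $\hat\om(K)=0$, a contradiction. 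So some $x_0\in K$ satisfies $\hat\om(G)=1$ for every open $G\ni x_0$, and outer regularity gives $\hat\om(\{x_0\})=\inf_{G\ni x_0}\hat\om(G)=1=\hat\om(X)$, i.e.\ $\hat\om=\d_{x_0}$.

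For part (b) I would produce two explicit examples on the non-compact space $X=\RR$ with Lebesgue measure $\l$. To obtain $\hat\om=0$, apply Theorem \ref{thm1*} to $\sE=\{\RR\setminus K:K \text{ compact}\}$: finite intersections of its elements are complements of compact sets, which are non-null since $\l(\RR)=\infty$. The resulting $\om\in\mG$ vanishes on every compact set, so the inner supremum in Theorem \ref{measures} is zero for every open $G$, whence $\hat\om\equiv 0$. To obtain a Dirac example, fix $x_0=0$ and apply Theorem \ref{thm1*} to $\sE=\{G\setminus\{0\}:G \text{ open},\,0\in G\}$; finite intersections are non-null punctured open neighbourhoods of $0$. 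The resulting $\om\in\mG$ satisfies $\om(G)=1$ for every open $G\ni 0$, and Corollary \ref{hatnuX} gives $\hat\om(\{0\})=\inf_{G\ni 0}\om(G)=1$ and $\hat\om(\RR)=\sup\{\om(K):K \text{ compact}\}\geq \om([-1,1])=1$, so $\hat\om=\d_0$.

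Part (c) is immediate from Lemma \ref{howd}: if $\hat\om=\d_{x_0}$ and $G$ is open with $x_0\in G$, then $\om(G)\geq\hat\om(G)=1$; since $\om\in\mG$ takes only values in $\{0,1\}$, necessarily $\om(G)=1$, i.e.\ $\om\in\mG(x_0)$.

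The only mildly delicate step is the compactness extraction in (a), which relies on inner regularity supplying a compact set of $\hat\om$-measure $1$ and on finite subadditivity of $\hat\om$ on open covers; everything else reduces to direct invocation of Theorem \ref{measures}, Theorem \ref{thm1*}, Lemma \ref{howd}, and Corollary \ref{hatnuX}.
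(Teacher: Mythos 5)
Your proposal is correct and follows essentially the same route as the paper: part (a) rests on the minimax formula \eqref{bbb} together with the compactness extraction of a concentration point (which you run on the regular Borel measure $\hat\om$ where the paper invokes Lemma \ref{gammeas} for $\om$ itself), part (b) is the same Theorem \ref{thm1*} construction carried out on a different non-compact space, and part (c) is exactly the inequality $\hat\om(G)\leq\om(G)$ of Lemma \ref{howd}, which the paper re-derives explicitly via Urysohn's lemma. All steps check out.
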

\begin{proof}
(a) If $\om (K) = 0$ for all compact $K$, the first formula of \eqref{bbb} implies that  $\hat\om = 0$. If $\om(K) = 1$ for some compact $K$, by Lemma \ref{gammeas}  there is a unique $x_0 \in X$ for which $\om (G) =1$ if $x_0 \in G$ and $G$ is open. From the second part of \eqref{bbb} it is immediate that $\hat\om(B) =1$ if and only if $x_0 \in B$. Hence $\hat\om \in \mathfrak D$.

(b) For an example of both possibilities let $X = (0,1)$ with the standard locally compact topology  and Lebesgue measure. Let $\om\in \mG$ be defined by  Theorem \ref{thm1*} with $E_\ell = (0, 1/\ell),~\ell \in \NN$. Then $\om (K) = 0$ for all compact $K\subset (0,1)$ and hence $\hat\om = 0$. On the other hand if $E_\ell = (1/2+1/\ell, 1/2)$ in Theorem \ref{thm1*}, $\om\in \mG$ with $\om ([1/2+1/\ell, 1/2]) =1$ for all $\ell$ and hence $\hat\om= \d_{1/2} \in \mathfrak D$.

(c) When   $\hat\om = \d_{x_0}$ let $x_0 \in G $, open. Since $\{x_0\}$ is compact by Lemma \ref{urysohnlcs} there exists $v \in C_0(X,\tau)$ with $v(X) \subset [0,1]$,
$v(x_0) = 1$, $v(X \setminus G) = 0$. Now  $\om(G) = 1$ for every open set with $x_0\in G$ since
$$1 \geq \om (G) \geq \int_G v\,d\om =\int_X v\,d\om = \int_X v\,d\hat\om = v(x_0)=1 .
$$
\end{proof}

\appendix
\section{Appendix: {\bf $\boldsymbol \mG$ and Extreme Points of the Unit Ball in $\boldsymbol \Lid$ }} \label{apA}
\begin{theorem}[Rainwater \cite{rainwater}] \label{rainthm} In a Banach space $B$, $x_k \wk x$ as $k \to \infty$ if and only if $f(x_k) \to f(x)$ in $\RR$ for all  extreme points $f$ of the closed  unit ball in $B^*$.\qed\end{theorem}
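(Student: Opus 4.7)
The forward direction is immediate: if $x_k \wk x$, then by definition $f(x_k) \to f(x)$ for every $f \in B^*$, a fortiori for every extreme point of the closed unit ball $B^*_1$. The work lies in the converse, and I would organise it in three steps.

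\emph{Step 1 (boundedness).} From the hypothesis, $\{f(x_k)\}$ is a convergent, hence bounded, sequence of reals for every extreme point $f$ of $B^*_1$. I would first upgrade this to $\sup_k\|x_k\| < \infty$. Viewing each $x_k$ as a weak-$*$ continuous affine function on the weak-$*$ compact convex set $B^*_1$, the family $\{x_k\}$ is pointwise bounded on $\mathrm{ext}(B^*_1)$. Using the Krein--Milman theorem, $B^*_1 = \overline{\mathrm{co}}^{\,w^*}\mathrm{ext}(B^*_1)$, so pointwise boundedness extends by convex combination and weak-$*$ lower semicontinuity to all of $B^*_1$; the Banach--Steinhaus theorem then gives $\sup_k\|x_k\| < \infty$.

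\emph{Step 2 (integral representation).} Fix an arbitrary $g \in B^*$ with $\|g\|\leq 1$. Viewing $g$ as a point in the weak-$*$ compact convex set $B^*_1$, the Choquet--Bishop--de Leeuw theorem furnishes a regular probability measure $\mu_g$ on $B^*_1$, carried by $\mathrm{ext}(B^*_1)$ (in the Baire sense when $B$ is non-separable), that represents $g$ as a barycenter:
\begin{equation*}
g(y) \;=\; \int_{B^*_1} f(y)\, d\mu_g(f) \qquad \text{for every } y \in B.
\end{equation*}

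\emph{Step 3 (dominated convergence).} Apply this representation to $y = x_k - x$:
\begin{equation*}
g(x_k) - g(x) \;=\; \int_{B^*_1} f(x_k - x)\, d\mu_g(f).
\end{equation*}
Because $\mu_g$ is carried by $\mathrm{ext}(B^*_1)$, the integrand tends to $0$ $\mu_g$-a.e. by hypothesis, and $|f(x_k-x)| \leq \|x_k\| + \|x\|$ is uniformly bounded by Step~1. Dominated convergence yields $g(x_k)\to g(x)$. Since $g$ was arbitrary, $x_k \wk x$.

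\emph{Main obstacle.} The delicate point is Step~2 in the absence of metrizability of $B^*_1$ (i.e.\ when $B$ is non-separable): one must invoke the Bishop--de Leeuw refinement of Choquet and accept a representing measure that is only a Baire measure whose support is carried by $\mathrm{ext}(B^*_1)$ in a suitably measurable sense. An alternative that sidesteps integral representation entirely is Simons' inequality, which gives a direct combinatorial $\varepsilon$-argument reducing weak convergence on all of $B^*_1$ to its extreme points; this can replace Steps~2--3 and has the advantage of being wholly elementary, though the estimates are more intricate. Either route ultimately relies on the Krein--Milman principle that $\mathrm{ext}(B^*_1)$ is ``large enough'' to detect weak convergence, which is precisely the content of the theorem.
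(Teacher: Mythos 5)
The paper itself offers no proof of this statement: it is quoted from Rainwater's 1963 note and closed with \qed, so there is nothing internal to compare against. Judged on its own terms, your Steps 2 and 3 are precisely Rainwater's original argument --- represent an arbitrary $g$ in the dual ball as the barycentre of a probability measure carried by the extreme points (Choquet--Bishop--de Leeuw) and then pass to the limit by dominated convergence --- and they are sound once one knows $\sup_k\|x_k\|<\infty$, with the caveat you already note: in the non-separable case one must observe that $\{f: f(x_k-x)\not\to 0\}$ is a Baire set disjoint from $\mathrm{ext}(B^*_1)$, hence $\mu_g$-null.

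The genuine gap is Step 1. Boundedness of $\{x_k\}$ cannot be deduced from convergence at the extreme points; it must be a hypothesis, and in Rainwater's theorem as usually stated (and as this paper actually uses it in the Appendix, where the condition $\|u_k\|_\infty\le M$ is reinstated explicitly) it is one. Your argument breaks at the passage from $\mathrm{ext}(B^*_1)$ to its weak-$*$ closed convex hull: if $g=w^*\text{-}\lim_\alpha g_\alpha$ with each $g_\alpha$ a convex combination of extreme points, then for each fixed $k$ one has $g(x_k)=\lim_\alpha g_\alpha(x_k)$, but the bounds $\sup_k|g_\alpha(x_k)|$ are not uniform in $\alpha$, so no bound on $\sup_k|g(x_k)|$ follows and Banach--Steinhaus cannot be invoked. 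A concrete counterexample: in $B=c_0$, whose dual unit ball in $\ell_1$ has extreme points exactly $\{\pm e_n\}$, take $x_k=k\,e_k$ and $x=0$. Then $f(x_k)\to 0$ for every extreme point $f$, yet $\|x_k\|=k$ and $x_k\not\wk 0$ (test against $g=(a_m)\in\ell_1$ with $a_m=n^{-2}$ when $m=n^2$ and $a_m=0$ otherwise, for which $g(x_{n^2})=1$ for all $n$). Thus the literal ``if and only if'' as printed is false without a boundedness hypothesis, and Step 1 cannot be repaired; the correct course is to assume $\{x_k\}$ bounded, after which your Steps 2--3 complete the classical proof.
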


Recall that $L^*_\infty(X,\sL,\l)$ is the set of  finitely additive measures on $\sL$ that are zero on $\sN$.
Let $U^*_\infty = \{\nu \in \Lid:|\nu|(X) \leq 1\}$, the closed unit ball in $\Lid$. Then  $\nu \in U^*_\infty$  is an extreme point of $U^*_\infty$ if for $\nu_1,\nu_2 \in U^*_\infty$ and $\a \in (0,1)$
$$\nu(E) = \a \nu_1(E) + (1-\a)\nu_2(E) \forall E \in \sL \text{ implies that } \nu =\nu_1 =\nu_2.
$$
Clearly  extreme points have $|\nu|(X) =1$ and, by Theorem \ref{rainthm},  $u_k \rightharpoonup u_0\text{ in }L_\infty(X,\sL, \l)$ if and only if for some $M$
\begin{equation*}
 \|u_k\|_\infty \leq M \text{ and } \int_X u_k \,d\nu \to \int_X u_0 \,d\nu \text{ as } k \to \infty \text{ for all extreme points $\nu$  of $U^*_\infty$}.
\end{equation*}
Thus \eqref{key} is a consequence of the following result.
\begin{lemma}\label{kkey} $\nu$ is an extreme point of $U^*_\infty$ if and only if either $\nu$ or $-\nu \in \mG$, see \eqref{mathfrakg}.
\end{lemma}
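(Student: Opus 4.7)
The plan is to prove both implications by direct manipulation of convex combinations in $U^*_\infty$, using the Jordan-type decomposition \eqref{pm} and the elementary observation that an element $\rho$ of $\Upsilon(\sL)$ with $\rho(N) = 0$ on $\sN$ has $\rho^\pm(N) = 0$ on $\sN$ as well (because if $F \subset N \in \sN$ then $F \in \sN$ by completeness of $\l$).

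For the ($\Leftarrow$) direction, suppose $\nu \in \mG$. Then $\nu \geq 0$ and $\nu(X) = 1$, so $|\nu|(X) = 1$ and $\nu \in U^*_\infty$. Suppose $\nu = \a\nu_1 + (1-\a)\nu_2$ with $\a \in (0,1)$ and $\nu_1,\nu_2 \in U^*_\infty$. The relation $1 = \nu(X) = \a\nu_1(X) + (1-\a)\nu_2(X)$ together with $\nu_i(X) \leq |\nu_i|(X) \leq 1$ forces $\nu_1(X) = \nu_2(X) = 1$, which in turn (via $\nu_i^-(X) \leq 0$) forces $\nu_i \geq 0$. For any $A \in \sL$ with $\nu(A) = 0$ the decomposition yields $\nu_i(A) = 0$; for any $A$ with $\nu(A) = 1$ apply this to $X \setminus A$ to conclude $\nu_i(A) = 1$. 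Hence $\nu_1 = \nu_2 = \nu$, and $\nu$ is extreme. The case $-\nu \in \mG$ follows at once because extremality is preserved by $\nu \mapsto -\nu$.

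For the ($\Rightarrow$) direction, suppose $\nu \in U^*_\infty$ is extreme. The argument has three steps. Step 1: show $|\nu|(X) = 1$. Otherwise fix any $\mu \in \Lid$ with $|\mu|(X) = 1$ (e.g.\ the normalized restriction of $\l$ to a non-null set of finite measure) and pick $0 < \e < 1 - |\nu|(X)$; then $\nu \pm \e\mu \in U^*_\infty$ and $\nu = \tfrac12(\nu+\e\mu) + \tfrac12(\nu-\e\mu)$ contradicts extremality. Step 2: show $\nu$ is one-signed. Decompose $\nu = \nu^+ - \nu^-$ via \eqref{pm} with $\nu^+(X) + \nu^-(X) = |\nu|(X) = 1$; if both $p := \nu^+(X)$ and $1-p := \nu^-(X)$ lie in $(0,1)$, set $\mu_1 = \nu^+/p$ and $\mu_2 = -\nu^-/(1-p)$. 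Then $\mu_1,\mu_2 \in U^*_\infty$ with $|\mu_i|(X)=1$ (using the null-set remark above), they are distinct because $\mu_1 \geq 0$ and $\mu_2 \leq 0$ are both non-trivial, and $\nu = p\mu_1 + (1-p)\mu_2$ contradicts extremality. Hence $\nu \geq 0$ or $\nu \leq 0$; by symmetry assume $\nu \geq 0$, so $\nu(X) = 1$. Step 3: show $\nu$ takes only the values $0$ and $1$. If $\nu(A) = t \in (0,1)$ for some $A \in \sL$, define $\nu_1(E) = \nu(A \cap E)/t$ and $\nu_2(E) = \nu(A^c \cap E)/(1-t)$. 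Both are non-negative, vanish on $\sN$, and satisfy $\nu_i(X) = 1$, so $\nu_i \in U^*_\infty$; they are distinct since $\nu_1(A) = 1 \neq 0 = \nu_2(A)$; and $\nu = t\nu_1 + (1-t)\nu_2$ contradicts extremality. Therefore $\nu \in \mG$.

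The main obstacle I anticipate is the bookkeeping in Step 2, where one must verify that the rescaled pieces $\nu^+/p$ and $\nu^-/(1-p)$ genuinely belong to $\Lid$ (rather than only to $\Upsilon(\sL)$). This is handled by the observation about $\nu^\pm$ vanishing on $\sN$ when $\nu$ does, which relies on completeness of $(X,\sL,\l)$ together with the infimum/supremum formulas \eqref{pmparts}; everything else is a one-line convexity check.
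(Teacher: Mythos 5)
Your proof is correct, and the converse direction together with the final ``0--1 valued'' step coincide with the paper's argument (restriction to $A$ and $X\setminus A$, normalized by $\nu(A)$ and $\nu(X\setminus A)$). Where you genuinely diverge is the reduction to one-signed measures. The paper handles this by using $\nu^+\wedge\nu^-=0$ to choose a set $A$ with $\nu^+(X\setminus A)+\nu^-(A)<\e_0$, verifying after some bookkeeping that $|\nu|(A)\in(0,1)$, and then splitting $\nu$ by restriction to $A$ and $X\setminus A$ exactly as in the later step. You instead read the Jordan decomposition itself as a convex combination, $\nu=p\,(\nu^+/p)+(1-p)\,(-\nu^-/(1-p))$ with $p=\nu^+(X)$, which is shorter, avoids the approximate Hahn-decomposition set entirely, and makes the two sub-steps of the forward direction visibly parallel (split by sign, then split by a set); the only price is the observation that $\nu^\pm$ vanish on $\sN$, which follows from monotonicity of $\l$ and the supremum formula \eqref{pmparts} (completeness of $\l$ is not actually needed, since the competing sets $F$ already lie in $\sL$). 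You also supply a perturbation argument for $|\nu|(X)=1$, a fact the paper asserts without proof just before the lemma; your parenthetical candidate for $\mu$ could fail for pathological $\l$ with no set of finite positive measure, but Theorem \ref{thm1*} with $\sE=\{X\}$ always furnishes a suitable $\mu\in\mG$, so the step stands.
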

\begin{proof}
If $|\nu|(X) = 1$ but $\nu$ is not one signed, then $|\nu| = \nu^+ + \nu^-$ where $\nu^+\wedge \nu^- =0$ and $\nu^+(X)\in (0,1)$. Let $0<\e_0 = \frac 12 \min\{ \nu^+(X), 1-\nu^+(X)\}$ and, by \eqref{pm},  choose $A \in \sL$ such that $\nu^+(X\setminus A) + \nu^-(A) = \e < \e_0$. If $\nu(A) = 0$ then $\nu^+(X) = \nu^+(X\setminus A) + \nu^+( A) = \nu^+(X\setminus A) + \nu^-( A) = \e < \e_0$, which is false. So $\nu(A) \neq 0$ and hence $|\nu|(A) >0$. If $|\nu|(A) = 1$ then $\nu^+(X)= 1 +\e -2\nu^-(A) \geq 1 -\e$, and hence $1-\nu^+(X) <\e <\e_0$, which is false. So $|\nu|(A) \in (0,1)$. Let
$$\nu_1(E) = \frac{\nu(A \cap E)}{|\nu|(A)},\quad \nu_2(E) = \frac{\nu((X \setminus A) \cap E)}{|\nu|(X\setminus A)} \forall E \in \sL.
$$
Then $\nu_1,\,\nu_2 \in U^*_\infty$ and, for all $E \in \sL$,
$$  \nu(E) = \a \nu_1(E) + (1-\a) \nu_2(E),\text{ where } \a = |\nu|(A),~(1-\a) = |\nu|(X\setminus A).
$$
Since $\a \in (0,1)$, $\nu_1(A) =\nu(A) /|\nu|(A)\neq 0$ and $\nu_2(A) = 0$, this shows that $\nu$ is not an extreme element of $U^*_\infty$ if $\nu$ is not one-signed.

Now suppose $0\leq \nu \in U^*_\infty$ (for $\nu \leq 0$ replace $\nu$ with $-\nu$).  If $\nu \notin \mG$ there exists $A \in \sL$ with $\nu(A) \in (0,1)$. Let
$$\nu_1(E) = \frac{\nu(A \cap E)}{\nu(A)},\quad \nu_2(E) = \frac{\nu((X \setminus A) \cap E)}{\nu(X\setminus A)} \forall E \in \sL.
$$
Then $\nu_1,\,\nu_2 \in U^*_\infty$,
$$  \nu(E) = \a \nu_1(E) + (1-\a) \nu_2(E) \forall E \in \sL,\text{ where } \a = \nu(A),~(1-\a) = \nu(X\setminus A).
$$
Since $\nu_1(A) = 1\neq 0 =\nu_2(A)$, $\nu$ is not extreme. Hence $\nu$  extreme implies that $\pm\nu \in\mG$.

Now suppose that $\nu\in \mG$ and  for all $E \in \sL$,
\begin{equation*}\nu(E) = \a \nu_1(E) + (1-\a) \nu_2(E),\quad \a\in (0,1),\quad \nu_1,\nu_2 \in U^*_\infty.
\end{equation*}
Then $\nu \geq 0$ and if $\nu(E) = 1$,
$$1=\nu(E) = \a \nu_1(E) + (1-\a) \nu_2(E) \leq  \a |\nu_1|(X) + (1-\a) |\nu_2|(X) \leq 1$$
which implies that
$\nu_1(E) =\nu_2(E) = \nu (E)= 1$. In particular $\nu_1(X) =\nu_2(X) = 1$. If $\nu(E) = 0$ then
$\nu(X\setminus E) = 1$ and so    $\nu_1(X\setminus E) =\nu_2(X \setminus E) = 1$, whence $\nu_1(E) =\nu_2(E)= \nu(E)= 0$. Thus $\nu = \nu_1 = \nu_2$ and
$\nu$ is  extreme if $\nu \in \mG$.
\end{proof}

{\bf Closing Remark.}
Although the main result, Theorem \ref{thmIFF}, is derived above from Yosida-Hewitt theory \cite{yosidaetal} without reference to other sources, Theorem \ref{rainthm} and
   Lemma \ref{kkey} lead to \eqref{key}, and Lemma \ref{ans} yields Corollary \ref{iffweak}, thus dispensing with any need for Theorems \ref{thm10} and \ref{compositions}. \qed

\textbf{Acknowledgement.}  I am grateful to Charles Stuart for guiding me to the original source \cite[Annexe]{banach}, to Anthony Wickstead for copies of \cite{valadier} and \cite{wrobel}, to Geoffrey Burton, Brian Davies and Eugene Shargorodsky for their interest and  comments, and  to Mauricio Lobos whose question led to this study. \qed

\date{\today}
\ed

 A particular consequence is the following.
\begin{corollary}\label{charfns} ~If $\{A_k\}\subset \sL$ is a sequence of non-null, disjoint sets, then $\|\chi_{A_k}\|_\infty = 1$ and $\chi_{A_k} \wk 0$ as $k \to \infty$ in $L_\infty(X, \sL, \l)$, where $\chi_{A_k}$ denotes the
characteristic function of $A_k$.
\end{corollary}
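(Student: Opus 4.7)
The plan is a direct application of Theorem \ref{thmIFF}. First, since $A_k \notin \sN$ and $\chi_{A_k}$ takes only the values $0$ and $1$, it is immediate that $\|\chi_{A_k}\|_\infty = 1$ and in particular the sequence is bounded in $\Li$. So the only issue is to verify the pointwise criterion \eqref{IFF} for weak nullity.

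Next I would compute the level sets $A_\a(\chi_{A_k})$ explicitly: for $\a \geq 1$ we have $A_\a(\chi_{A_k}) = \emptyset$, while for $\a \in (0,1)$ we have $A_\a(\chi_{A_k}) = \{x : \chi_{A_k}(x) > \a\} = A_k$. Given any strictly increasing sequence $\{k_j\} \subset \NN$ and any $\a > 0$, the disjointness hypothesis on $\{A_k\}$ then gives
\[
\bigcap_{j=1}^{J} A_\a(\chi_{A_{k_j}}) \;\subset\; A_{k_1} \cap A_{k_2} \;=\; \emptyset \quad \text{for every } J \geq 2,
\]
so certainly $\l\bigl(\cap_{j=1}^{J} A_\a(\chi_{A_{k_j}})\bigr) = 0$ for $J \geq 2$ (and already for $J=1$ when $\a \geq 1$). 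This is precisely condition \eqref{IFF} of Theorem \ref{thmIFF}, and therefore $\chi_{A_k} \wk 0$ in $L_\infty(X,\sL,\l)$.

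There is essentially no obstacle here: the argument is a one-line verification of \eqref{IFF}, exploiting the fact that the level sets of a characteristic function coincide with its support, and that pairwise-disjoint sets have empty intersections of length $\geq 2$. The substantive content has already been absorbed into Theorem \ref{thmIFF}; Corollary \ref{charfns} just records the simplest case in which the criterion is trivially satisfied, and it is consistent with the remark at the end of Section \ref{S3.1}(1) and with example (2) taking $\a_1 = 1$, $\a_i = 0$ for $i \geq 2$.
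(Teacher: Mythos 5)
Your proof is correct, but it takes a genuinely different route from the paper's. You verify the pointwise criterion of Theorem \ref{thmIFF}: since $A_\a(\chi_{A_k})=A_k$ for $\a\in(0,1)$ and the $A_k$ are pairwise disjoint, any intersection of two of these level sets is empty, so \eqref{IFF} holds with $J=2$; given Theorem \ref{thmIFF} this is indeed a one-line verification, and it matches the way the corollary is recovered informally in Section \ref{S3.1} (1)--(2). The paper's own proof of Corollary \ref{charfns}, however, is a direct argument from the Yosida--Hewitt representation alone: for an arbitrary $f\in\Li^*$ with representing measure $\nu=\nu^+-\nu^-$ one has $f(\chi_{A_k})=\nu(A_k)=\nu^+(A_k)-\nu^-(A_k)$, and finite additivity together with disjointness gives $0\leq\sum_{k=1}^K\nu^\pm(A_k)=\nu^\pm\big(\cup_{k=1}^K A_k\big)\leq|\nu|(X)<\infty$ for all $K$, forcing $\nu^\pm(A_k)\to 0$. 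That argument needs only Theorem \ref{yhthm} and the decomposition \eqref{pm} --- no $0$-$1$ measures, no Theorem \ref{thm1*} (hence no Zorn's lemma), and none of the machinery behind \eqref{key} --- which is why the corollary can be stated immediately after the representation theorem (note its forward reference to \eqref{pmparts}). Your argument is not circular, since Theorem \ref{thmIFF} does not depend on this corollary, but it imports substantially heavier tools; what it buys in exchange is the observation that the corollary is the simplest possible instance of the general weak-nullity criterion.
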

\begin{proof}
By Theorem \ref{yhthm} for any $f \in \Li^*$ there is a finitely additive measure $\nu$ with
$$f(\chi_{A_k}) = \int_X \chi_{A_k}\,d\nu = \nu(A_k) = \nu^+(A_k) - \nu^-(A_k),$$
where $\nu^\pm$ are non-negative finitely additive measures (see \eqref{pmparts} below).
Since, by finite additivity,
$$0 \leq \sum_{k=1}^K \nu^\pm (A_k) = \nu^\pm \left(\cup_{k=1}^K A_k\right)\leq \nu^\pm(X) \leq |\nu|(X) = \|f\|_\infty< \infty,~ \forall K \in \NN,
$$
and the result follows.
\end{proof}

\subsection*{Ultrafilters in $\boldsymbol \Li$}

It is helpful to identify $\om \in \mathfrak{G}$ with families of subsets of $X$, just as Dirac measures $\mathfrak D$ are identified with singletons.
We then show that if $u \in L_\infty(X,\sB,\l)$ and $\om \in \mathfrak{G}$, then $u$ is constant $\om$-almost everywhere on $X$, just as any continuous function is constant  $\d$-almost everywhere where $\d\in \mathfrak D$.
 Thus $\mathfrak{G}$ is the analogue  for essentially bounded functions of $\mathfrak D$ for continuous functions.
For $\om \in \mathfrak{G}$, let
$\sF(\om) = \{E \in \sB :~ \om (E) = 1\}$.
Then
\begin{align*}
(i)\quad &
\sN \cap \sF(\om) = \emptyset  ;
\\
(ii) \quad & \text{ if }  E_1,\,E_2 \in \sF(\om), \text{ then } E_1\cap E_2 \in \sF(\om)    ;
\\
(iii) \quad & \text{ if }  E_2\supset E_1\in \sF(\om),  \text{ then } E_2 \in \sF(\om)    ;
\\
(iv) \quad & \text{ if }    \sF(\om) \subset  \sF(\hat\om), \text{  then }   \sF(\om) = \hat \sF(\om)  .
\end{align*}
In Boolean algebra any family of subsets of $L_\infty^*(X,\sB,\l)$ satisfying (i)-(iii) is called a filter, and a filter which is maximal with respect to set inclusion  is
called an ultrafilter. Since $\sF(\om)$ satisfies (i)-(iv) it is an ultrafilter. The proof of Theorem \ref{thm1*} leads to the conclusion that if $\sF$ is an ultrafilter (the existence of which followed from Zorn's lemma) and $E \in \sB$, then exactly one of $E$ and $X\setminus E$ is in $\sF$,  after which the existence of the finitely additive measure with $\sF = \sF(\om)$  is straightforward.

Let $\Fk$ denote the family of ultrafilters in $L_\infty^*(X,\sB,\l)$. Since, for  $\sF \in \Fk$ there is a unique $\om \in \mathfrak{G}$
with $\sF = \sF(\om)$ the relation $\sF(\om) = \sF$ defines a one-to-one correspondence between $\Fk$
and  $\mathfrak{G}$.

In a metrizable space something sharper can be said.
\begin{theorem}\label{measures}
Let $(X,\tau)$ be  locally compact and metrizable and let $0\leq \nu \in L_\infty^*(X,\sB,\l)$.

 (a) When $K$ is compact and $G$  is open,
$$\nu(K) \leq \hat \nu(K) \text{ and } \hat \nu (G) \leq \nu (G).
$$
(b) For a compact set $K$,
$$\hat \nu(K) = \inf\{ \nu(G): K \subset G,\,G \textnormal{ open}\} = \lim_{k \to \infty}\nu(F_k)  = \lim_{k \to \infty}\nu(G_k),
$$
where
$$ F_k = \{ x: \textnormal{dist}\,\{x,K\} \leq 1/k\} \text{ and } G_k = \{ x: \textnormal{dist}\,\{x,K\} < 1/k\},~k\in\NN.
$$

(c) For an open set $G$,
$$\hat \nu(G) = \sup\{ \nu(F): F \subset G,\,F \textnormal{ closed}\}  = \lim_{k \to \infty}\nu(G^k)  = \lim_{k \to \infty}\nu(F^k).
$$
where, with $^cG = X\setminus G$,
$$ G^k = \{ x: \textnormal{dist}\,\{x,^cG\} > 1/k\} \text{ and } F^k = \{ x: \textnormal{dist}\,\{x,^cG\} \geq 1/k\},~k\in\NN.
$$
(d) For any  Borel set $B$,
$$\hat \nu(B) = \sup_{_{\substack{\,K\, \textnormal{compact}\\K \subset B}}}\left\{\inf_{_{\substack{\,G\,\textnormal{open}\\K\subset G}}}~\nu(G) \right\}= \inf_{_{\substack{\,G\, \textnormal{open}\\B \subset G}}}\left\{\sup_{_{\substack{\,F\,\textnormal{closed}\\F\subset G}}}\nu(F) \right\}
 .
 $$
\end{theorem}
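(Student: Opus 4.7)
The plan is to derive both minimax identities by combining the regularity of the Borel measure $\hat\nu$ with the pointwise formulas already established in Corollary \ref{hatnuX}. By the Riesz Representation Theorem \cite[Thm. 6.19]{rudin} applied to $\hat f$, the measure $\hat\nu$ is a finite regular Borel measure; in particular, for every $B \in \sB$ one has both outer regularity,
$$\hat\nu(B) = \inf\{\hat\nu(G) : B \subset G,\ G \text{ open}\},$$
and inner regularity,
$$\hat\nu(B) = \sup\{\hat\nu(K) : K \subset B,\ K \text{ compact}\}.$$

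For the first identity in \eqref{bbb}, I would substitute into the outer regularity formula the first equality of Corollary \ref{hatnuX}, namely $\hat\nu(G) = \sup\{\nu(K) : K \subset G,\ K \text{ compact}\}$, yielding
$$\hat\nu(B) = \inf_{G \supset B,\ G \text{ open}}\ \sup_{K \subset G,\ K \text{ compact}} \nu(K).$$
For the second identity, I would substitute into the inner regularity formula the second equality of Corollary \ref{hatnuX}, namely $\hat\nu(K) = \inf\{\nu(G) : K \subset G,\ G \text{ open}\}$, obtaining
$$\hat\nu(B) = \sup_{K \subset B,\ K \text{ compact}}\ \inf_{G \supset K,\ G \text{ open}} \nu(G).$$

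The only point that requires a moment's thought is confirming that both outer and inner regularity hold on every Borel set, not merely on open sets (for inner) or compact sets (for outer). This is automatic in the present setting because $\hat\nu(X) \leq |\nu|(X) < \infty$, so $\hat\nu$ is finite and the version of the Riesz Representation Theorem in \cite[Thm. 6.19]{rudin} supplies both properties on all of $\sB$. Beyond this brief verification, the argument is a one-line substitution and there is no substantive obstacle; all of the real work was done in establishing Corollary \ref{hatnuX}, which in turn rested on the Urysohn-type construction of the nested sequences $\{G_n\}$, $\{K_n\}$ in Theorem \ref{newsing}.
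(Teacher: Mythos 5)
Your substitution argument is sound as far as it goes, and for the two minimax identities it is essentially the same route the paper takes: outer and inner regularity of the finite Borel measure $\hat\nu$ combined with the identities $\hat\nu(G)=\sup\{\nu(K): K\subset G,\ K \text{ compact}\}$ and $\hat\nu(K)=\inf\{\nu(G): K\subset G,\ G \text{ open}\}$ of Corollary \ref{hatnuX}. But what you have proved is not the stated theorem. The statement has four parts, and (a)--(c) are untouched by your argument. Part (a) is the content of Lemma \ref{howd} (Urysohn functions squeezed between $K$ and $G$), and parts (b) and (c) are the genuinely ``metrizable'' additions: the inf and sup are attained as \emph{limits along the explicit monotone sequences} $F_k=\{x:\textnormal{dist}(x,K)\leq 1/k\}$, $G_k=\{x:\textnormal{dist}(x,K)<1/k\}$ and their analogues for open $G$. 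Proving these requires more than regularity: the paper uses local compactness to enclose $K$ in an open set with compact closure so that the $F_k$ are eventually compact, notes that $\cap_k F_k=K$ and $G_k\subset F_k\subset F_{k-1}$, and then applies the $\sigma$-additivity of $\hat\nu$ along the nested sequence together with the sandwich $\hat\nu(G_k)\leq\nu(G_k)\leq\nu(F_k)\leq\hat\nu(F_k)$ furnished by part (a). None of that is a one-line substitution, and it is where the actual work of this version of the theorem lies.

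There is also a mismatch in part (d) itself: the second identity there takes the inner supremum over \emph{closed} sets $F\subset G$, whereas your substitution yields the compact-set form $\inf_{G}\sup_{K\subset G,\ K \text{ compact}}\nu(K)$. These are not interchangeable in a non-compact space: if $\om\in\mG$ vanishes on every compact set (Corollary \ref{surprise}(b)), then with $B=X$ the only admissible $G$ is $X$ and $\sup\{\om(F): F\subset X,\ F \text{ closed}\}\geq\om(X)=1$, while $\sup\{\om(K): K \text{ compact}\}=0=\hat\om(X)$. The paper reaches the closed-set form by complementation from part (b), a step that implicitly needs $\nu(X)=\hat\nu(X)$; your proof neither supplies that step nor flags the discrepancy. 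So the proposal correctly reproduces the easy final step of the paper's argument but omits the parts (a)--(c) on which that step depends and does not actually arrive at the formula as stated.
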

\begin{proof}  (a) This follows from Lemma \ref{howd} since a metric space is Hausdorff.

(b) By part (a),  $K \subset G$ and $G$ open implies $\hat \nu (K) \leq \hat \nu(G) \leq \nu(G)$ and hence
$$\hat \nu(K) \leq \inf\{ \nu(G): K \subset G,\,G \textnormal{ open}\}.
$$
Since $X$ is locally compact, for each $x \in K$ there exists an open set $G_x$ with $x \in G_x$ and  $\ol{G_x}$  compact. Therefore if $K$ is compact, $K \subset G: = \cup_{j=1}^J G_{x_j}$ for some $J \in \NN$. Thus $G$ is open   and any closed subset of  $G$ is compact.
Now  $F_k$ (defined in the statement of (b)) is closed, $F_k \subset G$ for all $k$ sufficiently large and $ \cap_k F_k = F$ because $F$ is closed. Moreover $G_k$ is open and $ G_{k} \subset F_k\subset F_{k-1}\subset G$ for all $k$ sufficiently large. Therefore, by part (a),
$$\hat\nu (K) \leq \hat \nu (G_k) \leq\nu(G_k)\leq \nu(F_{k}) \leq \hat\nu(F_{k})\to \hat \nu (K) \text{ as } k \to \infty,
$$
since $\hat \nu$ is $\sigma$-additive. Hence, when $K$ is compact
$\inf\{ \nu(G): K \subset G,\,G \textnormal{ open}\} \leq \hat \nu (K),
$
which proves  (b). Part (c)  follows by finite additivity since the complement of an open set is closed and vice versa and (d) is immediate from (b) and (c) since $\hat \nu$ is a regular Borel measure.
\end{proof}

It is immediate that the non-negative Borel measure $\hat \nu$ is absolutely continuous with respect to $\l$ (which defines $\sN$) if and only if  $$\inf \{ \nu (G): G ~\text{open},~F \subset  G\} = 0
\text{ for every closed set $F \in \sN$. }$$
 Also $\hat \nu$ is a Dirac measure supported at a singleton $p \in X$, if and only if
 $$\inf \{ \nu (G): G ~\text{open},~p\in G\}>0 \text { and } \nu (F) =0 \text{ when $F$ is closed and } p \notin F.$$

For general non-negative $\nu \in  L^*_\infty(X,\sB,\l)$, the regular measure  $\hat\nu$ is singular with respect to $\l$  if and only if $\hat \nu(K) >0$ for some compact $K \in \sN$. Even though     $\nu (K) = 0$, because $\nu \in L_\infty^*(X, \sB,\l)$, the singular behaviour  of $\hat\nu$ may be characterised in terms of $\nu$ as follows.
\begin{theorem}\label{singular} $0\leq \hat \nu \in \Sigma (\sB)$ has a singularity if and only if there exists $\a >0$ and a sequence    of closed sets with $\nu(F_k) \geq \a $,  $F_{k+1}\subset F_k$ for  all $k$, and $\l(F_k) \to 0$ as $k \to \infty$.
\end{theorem}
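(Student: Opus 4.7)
The plan is to dispatch the ``only if'' direction immediately from Corollary \ref{singular1} and then extract the ``if'' direction from the minimax formula of Theorem \ref{measures}(d). For ``only if'', if $\hat\nu$ has a singularity, Corollary \ref{singular1} produces a nested sequence of compact sets $K_n$ with $\nu(K_n)\geq\a$ and $\l(K_n)\to 0$. Since $(X,\tau)$ is metrizable and hence Hausdorff, compact sets are closed, so setting $F_n := K_n$ delivers the required closed sets.

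All the interest is in the ``if'' direction. Assume closed $F_n$ with $F_{n+1}\subset F_n$, $\nu(F_n)\geq\a$, and $\l(F_n)\to 0$ are given, and set $F := \bigcap_n F_n$. Then $F$ is closed and, by $\sigma$-additivity of $\l$, $\l(F)=0$, so $F\in\sN$. The goal is to show $\hat\nu(F)\geq\a$, which identifies $F$ as a null set on which $\hat\nu$ is positive, i.e.\ a singularity of $\hat\nu$ with respect to $\l$.

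The key step is to invoke the second equality in Theorem \ref{measures}(d) with $B = F_n$:
\[
\hat\nu(F_n) \;=\; \inf_{\substack{G\text{ open}\\F_n\subset G}}\ \sup_{\substack{F'\text{ closed}\\F'\subset G}}\ \nu(F').
\]
For every open $G$ containing $F_n$, the set $F_n$ itself is a closed subset of $G$, so the inner supremum is at least $\nu(F_n)\geq\a$. Taking the outer infimum yields $\hat\nu(F_n)\geq\a$ for every $n$. Since $\hat\nu$ is a finite $\sigma$-additive Borel measure and $F_n\searrow F$, continuity from above gives $\hat\nu(F)=\lim_n\hat\nu(F_n)\geq\a>0$, which finishes the ``if'' direction.

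I do not expect a serious obstacle: the whole argument turns on the second half of Theorem \ref{measures}(d), which is precisely the device that converts a lower bound on $\nu$ applied to a closed set into a lower bound on $\hat\nu$ on that same set. Without it, Lemma \ref{howd} only supplies $\nu(F)\leq\hat\nu(F)+\nu(X)-\hat\nu(X)$ for closed $F$, which would be too weak in the typical case $\hat\nu(X)<\nu(X)$ exemplified by Valadier and Hensgen.
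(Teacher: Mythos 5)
Your ``only if'' direction is correct and is in fact cleaner than the paper's own argument (which builds the $F_k$ as metric neighbourhoods $\{x:\textnormal{dist}(x,F)\leq 1/k\}$ of a closed null set $F$ carrying $\hat\nu$-mass): Corollary \ref{singular1} supplies nested compact $K_n$ with $\nu(K_n)\geq\a$ and $\l(K_n)\to 0$, and compact subsets of a Hausdorff space are closed.

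The ``if'' direction, however, contains a genuine gap, and indeed that implication is false as stated whenever $(X,\tau)$ is not compact. The step that fails is the deduction $\hat\nu(F_n)\geq\nu(F_n)$ for a set $F_n$ that is merely \emph{closed}. The minimax formula you invoke, with the inner supremum over closed subsets of $G$, is not the one established in the paper: Theorem \ref{measures} (formula \eqref{bbb}) and Corollary \ref{hatnuX} take that supremum over \emph{compact} subsets of $G$, so a closed non-compact $F_n$ is not an admissible competitor and your lower bound on the supremum does not follow. The distinction is fatal. Let $X=(0,1)$ with Lebesgue measure and, as in Corollary \ref{surprise}(b), let $\om\in\mG$ satisfy $\om\big((0,1/\ell)\big)=1$ for all $\ell$ (Theorem \ref{thm1*}); then $\om(K)=0$ for every compact $K\subset(0,1)$, so $\hat\om=0$. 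Yet the sets $F_k=(0,1/k]$ are closed in $(0,1)$, nested, with $\om(F_k)=1$ and $\l(F_k)\to 0$: the hypotheses of the ``if'' direction hold with $\a=1$ while $\hat\om$ has no singularity. The same example gives $\hat\om(F_k)=0<1=\om(F_k)$, refuting both the closed-set inequality and the closed-set version of the minimax formula. Your own closing remark already points at the obstruction: for closed sets Lemma \ref{howd} only yields $\nu(F)\leq\hat\nu(F)+\nu(X)-\hat\nu(X)$, and the example above realises exactly that loss. (The paper's proof of this version of the statement commits the same error, applying the compact-set inequality $\nu(K)\leq\hat\nu(K)$ to sets that are only closed; the correct result is Corollary \ref{singular1} with compact sets, for which your argument does go through verbatim using \eqref{bbb}.)
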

\begin{proof} If $\a >0$ and such a sequence exists, by Theorem \ref{measures}(a), $\hat \nu (F_k) \geq \a$  for all $k$. Since $\{F_k\}$ is nested and $\hat \nu$ is $\sigma$-additive it follows that $\hat \nu (F) \geq \a$ where $F = \cap_k F_k$. Since $F\in \sN$, because $\lim_{k\to \infty} \l(F_k) = 0$ and $\l$ is
$\sigma$-additive, $\nu$ has a singularity.

Conversely if $\hat\nu\geq 0$ has a singularity there exists a closed $F \in \sN$ with $\hat \nu (F) = \a >0$. As before, let
$$ F_k = \{ x: \textnormal{dist}\,\{x,F\} \leq 1/k\} \text{ and } G_k = \{ x: \textnormal{dist}\,\{x,F\} < 1/k\}.
$$
Since $F$ is closed, $F_k$ is closed and $ \cap_k F_k = F$. Moreover $G_k$ is open and $F  \subset G_{k} \subset F_{k-1}$, for all $k$. Therefore, by Theorem \ref{measures}(a),
$$0<\a = \hat \nu (F) \leq \hat\nu( G_{k})\leq \nu(G_k)\leq \nu(F_{k-1}) \forall k.
$$
Since $\l(F_k) \to  \l(\cap_k F_k) = \l(F) =0$, this completes the proof.
\end{proof}
The following is immediate by finite additivity.
\begin{corollary}\label{singularopen} $0\leq \hat \nu \in \Sigma (\sB)$ has a singularity if and only if there exists $\a >0$ and a nested sequence, $G_{k+1}\supset G_k$,    of open sets with $\nu(G_k) \leq \nu(X) -\a $ for  all $k$ and $\l(G_k) \to 1$ as $k \to \infty$.
\end{corollary}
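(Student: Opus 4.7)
The plan is to derive Corollary \ref{singularopen} from Theorem \ref{singular} by the simple device of replacing closed sets with their open complements in $X$. Concretely, given the nested decreasing sequence $\{F_k\}$ of closed sets provided by Theorem \ref{singular} with $\nu(F_k)\geq\a$ and $\l(F_k)\to 0$, I would set $G_k := X\setminus F_k$. Each $G_k$ is open, and the monotonicity reverses: $F_{k+1}\subset F_k$ exactly when $G_{k+1}\supset G_k$.

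The two numerical conditions then follow from finite additivity of $\nu$ and countable additivity of $\l$. Because $\nu \in L_\infty^*(X,\sB,\l)$ is finite and finitely additive, $\nu(G_k) = \nu(X) - \nu(F_k)$, so $\nu(F_k)\geq\a$ is equivalent to $\nu(G_k)\leq\nu(X)-\a$. Similarly, when $\l$ is a finite (in the statement, normalised so that $\l(X)=1$) measure on $\sB$, we have $\l(G_k) = \l(X) - \l(F_k)$, so $\l(F_k)\to 0$ is equivalent to $\l(G_k)\to \l(X) = 1$.

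For the reverse direction, given a nested increasing sequence $\{G_k\}$ of open sets satisfying the stated open-set conditions, I would define $F_k := X\setminus G_k$. These are closed, nested decreasingly, and by the same two identities they satisfy $\nu(F_k)\geq\a$ and $\l(F_k)\to 0$. Theorem \ref{singular} then yields that $\hat\nu$ has a singularity.

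There is no real obstacle to overcome here; the only points requiring care are (i) noting that finite additivity of $\nu$ (which is all that is available, since $\nu$ need not be $\sigma$-additive) suffices for the complementation identity on a single set $F_k$, and (ii) justifying that $\l$ is finite in the ambient setting so that the complementation $\l(G_k) = \l(X) - \l(F_k)$ makes sense and gives the stated limit $\l(G_k)\to 1$. Both are immediate from the hypotheses of the section.
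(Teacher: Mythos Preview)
Your proposal is correct and is precisely the argument the paper has in mind: the paper simply states that the corollary ``is immediate by finite additivity'' from Theorem \ref{singular}, and your complementation $G_k = X\setminus F_k$ together with $\nu(G_k)=\nu(X)-\nu(F_k)$ and $\l(G_k)=\l(X)-\l(F_k)$ spells this out. Your caveat that finiteness of $\l$ (indeed $\l(X)=1$) is being used is well placed, since the limit $\l(G_k)\to 1$ in the statement presupposes it.
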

\ed
I will use notation from \cite[Section 4]{T18}.

Ideally, I would like to have a better understanding of weak convergence in $L_\infty(X, \mathcal{L}, \lambda)$ in terms of
point-wise essential ranges of the functions involved.

If I am not mistaken, it follows from \cite[Section 4.1]{T18} that for weak convergence of $u_k$ to $u$ in
$L_\infty(X, \mathcal{L}, \lambda)$, it is\\
{\bf (A)}\ necessary that for every $x_0 \in X_\infty$ and every $\alpha \in \mathcal{R}(u)(x_0)$ there exist
$\alpha_k \in \mathcal{R}(u_k)(x_0)$ such that $\alpha_k \to \alpha$ as $k \to \infty$; \\
{\bf (B)}\ sufficient that for every $x_0 \in X_\infty$
\begin{equation}\label{1}
\sup\left\{|\gamma| : \ \gamma \in \mathcal{R}(u_k - u)(x_0)\right\} \to 0 \ \mbox{ as } \ k \to \infty .
\end{equation}

Trivial examples show that {\bf (A)} is not sufficient for weak convergence of $u_k$ to $u$ in
$L_\infty(X, \mathcal{L}, \lambda)$. It would be nice to have an explicit example where  $u_k$ converges to $u$ weakly in
$L_\infty(X, \mathcal{L}, \lambda)$, but {\bf (B)} \underline{\it is not satisfied}.\\

{\bf Example.} \ Let $X = \mathbb{R}$ be equipped with the Lebesgue measure and the standard topology.
Let $u(x) := \sin\frac{1}{x}$. Since the norm in $L^1(\mathbb{R})$ is continuous with respect to shifts, it is easy to
show that $u_k := u\left(\cdot + \frac{1}{\pi k}\right)$ converges to $u$ weakly$^*$ in
$L_\infty(\mathbb{R})$  as $k \to \infty$. On the other hand, $\mathcal{R}(u)(0) = [-1, 1]$, while
$\mathcal{R}(u_k)(0) = \{0\}$. Hence, $u_k$ does not converge to $u$ weakly in
$L_\infty(\mathbb{R})$ according to  {\bf (A)}.

Now, let $v_k(x) := u\left(x + \frac{x |x|}{k}\right) = u\left(x\left(1 + \frac{|x|}{k}\right)\right)$. Using the mean value theorem, one can easily show that
$v_k$ converges to $u$ strongly in $L_\infty(\mathbb{R})$  as $k \to \infty$.

\underline{\it Question:} \ Let $w_k(x) := u\left(x + \frac{x}{k}\right) = u\left(x\left(1 + \frac{1}{k}\right)\right)$. Does $w_k$ converge to $u$ weakly in
$L_\infty(\mathbb{R})$? It is not difficult to see that \eqref{1} with $x_0 = 0$ is not satisfied for $w_k$ in place of $u_k$.

======================================

=======================================

============================================================================================

\subsection{The Pointwise Essential Range  of $\boldsymbol u \in \boldsymbol{L_\infty(\Om),~ \Om \subset  {\mathbf R}^n}$}
As in Appendix \ref{apA},  $L_\infty(\Om)$ is an abbreviation for $L_\infty(X, \sB, \l)$ when $X= \Om \subset \RR^n$ is open with respect to the standard metric, $\sB$ denotes the completion of Borel sets in $\Om$, $\l$ is Lebesgue measure and $E(u)$, the Lebesgue set of  $u \in L_\infty(\Om)$, is defined by \eqref{leb1}.  Then it is immediate from \eqref{localsR} and \eqref{leb1} that $u(x) \in \sR(u)(x)$ for all $x \in E(u)$, and $E(u)$ has full measure. If $u$ is the characteristic function of an open ball $B$ in $\Om$, it is obvious that $\sR(u)(x) = \{0,1\}$ at every point of the boundary of $B$.

More remarkably, at every point of $X$ the essential range can be the same non-trivial closed interval.
To see this, let $\mathbb{T}$ and $\mathbb{D}$ denote the unit circle and the unit disk, respectively, and let
$$
B_\rho(\alpha) := \{z \in \mathbb{C} : \ |z - \alpha| < \rho\} , \ \ \ \alpha \in \mathbb{C} , \ \rho > 0.
$$
Let $f \in H^\infty(\mathbb{D})$. We say that $\zeta \in \mathbb{T}$ is a singularity of $f$ if
$f$ cannot be extended analytically to $\zeta$ (see, e.g., \cite[Ch. II, Section 6]{G06}).

For almost every $\zeta \in \mathbb{T}$, the function $f$ has a nontangential limit at $\zeta$,
which we denote by $f(\zeta)$. We will use the following notation (which is different from the notation used in \cite[Ch. II, Section 6]{G06})
\begin{eqnarray*}
\mathcal{R}(f)(\vartheta_0) := \Big\{\alpha \in \mathbb{C}\ : \ \mu\left(\{\vartheta \in \mathbb{R} : \ |\vartheta - \vartheta_0| < \delta , \
\left|f\left(e^{i\vartheta}\right) - \alpha\right| < \varepsilon\}\right) > 0 \\
\mbox{ for all } \varepsilon, \delta > 0\Big\} , \ \ \ \vartheta_0 \in (-\pi, \pi] ,
\end{eqnarray*}
where $\mu$ denotes the standard Lebesgue measure on $\mathbb{R}$.

\begin{theorem}\label{t1}
Let $f \in H^\infty(\mathbb{D})$ be an inner function and let $e^{i\vartheta_0} \in \mathbb{T}$ be its singularity. Then
$\mathcal{R}(f)(\vartheta_0) = \mathbb{T}$.
\end{theorem}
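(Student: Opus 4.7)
The plan is to argue by contradiction: suppose some $\alpha_0 \in \mathbb{T} \setminus \mathcal{R}(f)(\vartheta_0)$, and deduce that $f$ extends analytically across $e^{i\vartheta_0}$, contradicting the hypothesis that this point is a singularity of $f$. Because $f$ is inner, $|f^*| = 1$ almost everywhere on $\mathbb{T}$, so the assumption $\alpha_0 \notin \mathcal{R}(f)(\vartheta_0)$ unpacks to the existence of $\eta, \delta > 0$ such that the nontangential boundary value $f^*(e^{i\vartheta})$ lies in a fixed closed proper sub-arc $J \subsetneq \mathbb{T}$ for almost every $\vartheta$ in $I_\delta := (\vartheta_0 - \delta, \vartheta_0 + \delta)$, where $J$ avoids an $\eta$-neighborhood of $\alpha_0$.

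The key auxiliary is the harmonic measure $\omega(\cdot, A) \colon \mathbb{D} \to (0,1)$ of the complementary open arc $A := \mathbb{T} \setminus J$, i.e.\ the bounded harmonic function on $\mathbb{D}$ with nontangential boundary values $\chi_A$. I set $u(z) := \omega(f(z), A)$, which is bounded and harmonic on $\mathbb{D}$ because, in two real dimensions, the composition of a harmonic function with a holomorphic one is harmonic. Applying Fatou's theorem together with the inner-function transport property that nontangential approach to $e^{i\vartheta}$ passes to nontangential approach to $f^*(e^{i\vartheta})$ for a.e.\ $\vartheta$, I obtain $u^* = \chi_A \circ f^*$ a.e.; in particular $u^*$ vanishes a.e.\ on $I_\delta$ because $f^* \in J \subset \mathbb{T} \setminus A$ there. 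Being bounded and harmonic, $u$ equals the Poisson integral of $u^*$, which reduces to an integral over $\mathbb{T} \setminus I_\delta^\circ$; such a Poisson integral is harmonic in $z$ on $\mathbb{C}$ off the support of integration, so $u$ extends harmonically through the open arc $I_\delta^\circ$ to an open neighborhood in $\mathbb{C}$, the extension being identically zero on $I_\delta^\circ$.

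To promote this harmonic extension to an analytic extension of $f$, I realize $\omega(\cdot, A)$ as the real part of a conformal equivalence $\Phi \colon \mathbb{D} \to \Sigma$ onto the strip $\Sigma := \{w \in \mathbb{C} : 0 < \Re w < 1\}$, given explicitly by $\Phi(w) = 1 + (i/\pi)\log M(w)$ where $M \colon \mathbb{D} \to \mathbb{H}$ is a M\"obius transformation sending the endpoints of $A$ to $0$ and $\infty$. Then $v := \Phi \circ f \colon \mathbb{D} \to \Sigma$ is analytic with $\Re v = u$, and the harmonic extension of $u$ across $I_\delta^\circ$ (to zero) lets me construct --- by taking a harmonic conjugate in a simply connected neighborhood meeting both sides of $I_\delta^\circ$ --- an analytic extension of $v$ across $I_\delta^\circ$. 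Since $\Phi$ carries $J \subset \mathbb{T}$ onto the boundary line $\{\Re w = 0\}$ of $\Sigma$, the Schwarz reflection principle extends $\Phi^{-1}$ analytically across this line into $\{w : -1 < \Re w < 0\}$, mapping it onto the reflection of $\mathbb{D}$ across $J$. Composing the two extensions yields an analytic continuation of $f = \Phi^{-1} \circ v$ across $I_\delta^\circ$, delivering the required contradiction.

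The main obstacle I expect is the last step: one must verify that the two Schwarz reflections --- of $v$ across $I_\delta^\circ$ in the source and of $\Phi^{-1}$ across $\{\Re w = 0\}$ in the target --- are compatible, so that their composition is a single-valued analytic continuation of $f$ rather than a multivalued branch. A subsidiary but also delicate point is justifying the identity $u^* = \chi_A \circ f^*$ almost everywhere, which rests on the Ryff/L\"owner transport property for inner functions and on the fact that almost every $f^*(e^{i\vartheta}) \in \mathbb{T}$ is a Fatou point of $\omega(\cdot, A)$.
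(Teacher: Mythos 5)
Your argument is correct, but it reaches the contradiction by a different mechanism than the paper. The paper also negates the definition to get $\varepsilon,\delta>0$ with $f(e^{i\vartheta})\in \mathbb{T}(\alpha,\varepsilon)$ for a.e.\ $\vartheta$ in $I_\delta$, but then applies the Poisson integral to $f$ \emph{itself}: splitting $f(z)=\int P_r(\theta-\vartheta)f(e^{i\vartheta})\,d\vartheta$ into the piece over $I_\delta$ (which lies in the closed convex set $\mathbf{D}(\alpha,\varepsilon)$, at distance $\tau>0$ from $\alpha$) and the piece over $\mathbb{T}\setminus I_\delta$ (of modulus $<\tau/2$ for $z$ near $e^{i\vartheta_0}$), it concludes that $f$ omits the ball $B_{\tau/2}(\alpha)$ near $e^{i\vartheta_0}$, contradicting the cluster-set theorem for inner functions at singularities (Garnett, Ch.~II, Thm.~6.6). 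You instead apply the Poisson representation to the harmonic function $\omega(f(\cdot),A)$ and upgrade the vanishing of its boundary values on $I_\delta$ to an analytic continuation of $f$ via the strip map and two Schwarz reflections, contradicting the definition of singularity directly. What the paper's route buys is brevity and elementarity: a convexity estimate plus one citation, with no need for harmonic measure, conformal mapping, or reflection; what your route buys is self-containedness, since you in effect reprove the relevant half of the cluster-set theorem rather than quoting it. Your two flagged worries are both resolvable, and more easily than you suggest: the identity $u^*=\chi_A\circ f^*$ a.e.\ needs no nontangential transport property, only that $\omega(\cdot,A)$ is continuous on $\overline{\mathbb{D}}$ off the two endpoints of $A$ (so unrestricted convergence $f(z)\to f^*(e^{i\vartheta})$ suffices), together with the fact that $\{\vartheta: f^*(e^{i\vartheta})\in\{a,b\}\}$ is null because $f^*$ pushes Lebesgue measure to the harmonic measure at $f(0)$ (or simply shrink $A$ slightly so that $f^*$ lands a.e.\ in the interior of $J$); and there is no monodromy issue in the last step, since the reflected $v$ and the reflected $\Phi^{-1}$ are each single-valued on genuine two-sided neighbourhoods (the latter meromorphic on the doubled strip, with its only pole at $-\overline{\Phi(0)}$, which is avoided because $\Re v\to 0$ at the arc), so their composition is a single-valued analytic continuation of $f$ agreeing with $f$ on the inner side.
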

\begin{proof}
Since $|f| = 1$ almost everywhere on $\mathbb{T}$, one has $\mathcal{R}(f)(\vartheta_0) \subseteq \mathbb{T}$.

Suppose $\alpha \in \mathbb{T}$ does not belong to $\mathcal{R}(f)(\vartheta_0)$. Then there exist
$\varepsilon, \delta \in (0, 1)$ such that
\begin{equation}\label{mu0}
\mu\left(\{\vartheta \in \mathbb{R} : \ |\vartheta - \vartheta_0| < \delta , \
\left|f\left(e^{i\vartheta}\right) - \alpha\right| < \varepsilon\}\right) = 0 .
\end{equation}
Let $\alpha_{\pm \varepsilon}$ be the two points where the circle $\{\zeta \in \mathbb{C} : \ |\zeta - \alpha| = \varepsilon\}$
intersects $\mathbb{T}$ and let $\mathbb{T}(\alpha, \varepsilon)$ and $\mathbf{D}(\alpha, \varepsilon)$ be the intersections
of $\mathbb{T}$ and $\mathbb{D}\cup\mathbb{T}$ with the closed half-plane containing $0$ whose boundary is the straight line going
through the points $\alpha_{\pm \varepsilon}$:
\begin{eqnarray*}
&& \mathbb{T}(\alpha, \varepsilon) := \{\zeta \in \mathbb{T} | \ \mbox{Re}\, \left(\alpha \overline{\zeta}\right) \le
\mbox{Re}\, \left(\alpha \overline{\alpha_{\pm \varepsilon}}\right)\} , \\
&& \mathbf{D}(\alpha, \varepsilon) := \{\zeta \in \mathbb{D}\cup\mathbb{T} | \ \mbox{Re}\, \left(\alpha \overline{\zeta}\right) \le
\mbox{Re}\, \left(\alpha \overline{\alpha_{\pm \varepsilon}}\right)\} .
\end{eqnarray*}
Let $\tau > 0$ be the distance from $\alpha$ to $\mathbf{D}(\alpha, \varepsilon)$. (It is not difficult to see that $\tau$ is the distance
from $\alpha$ to the midpoint of the chord $[\alpha_{-\varepsilon}, \alpha_{\varepsilon}]$ and
$\tau = 1 - \cos(2 \arcsin\frac{\varepsilon}{2})$.)

Consider the Poisson kernel:
$$
P_r(\vartheta) := \frac{1 - r^2}{1 - 2r \cos \vartheta + r^2}\ , \ \ \ 0 \le r < 1 , \ \vartheta \in \mathbb{R} .
$$
There exists $\delta_0 > 0$ such that
\begin{equation}\label{est}
\int_{\delta \le |\vartheta - \vartheta_0| \le \pi} P_r(\theta - \vartheta)\, d\vartheta < \tau/2 , \ \ \
\forall z = r e^{i\theta} \in B_{\delta_0}(e^{i\vartheta_0}) .
\end{equation}

It follows from \eqref{mu0} that $f\left(e^{i\vartheta}\right) \in \mathbb{T}(\alpha, \varepsilon)$ for almost all
$\vartheta$ with $|\vartheta - \vartheta_0| < \delta$. Since $P_r \ge 0$,
$$
\int_{|\vartheta - \vartheta_0| < \delta} P_r(\theta - \vartheta)\, d\vartheta <
\int_{ |\vartheta - \vartheta_0| \le \pi} P_r(\theta - \vartheta)\, d\vartheta = 1 ,
$$
$\mathbb{T}(\alpha, \varepsilon) \subset \mathbf{D}(\alpha, \varepsilon)$ and the latter is a closed convex set containing $0$, one has
\begin{equation}\label{close}
\int_{|\vartheta - \vartheta_0| < \delta} P_r(\theta - \vartheta) f(e^{i\vartheta})\, d\vartheta \in \mathbf{D}(\alpha, \varepsilon) .
\end{equation}
On the other hand, it follows from \eqref{est} that
\begin{equation}\label{far}
\left|\int_{\delta \le |\vartheta - \vartheta_0| \le \pi} P_r(\theta - \vartheta) f(e^{i\vartheta})\, d\vartheta\right| \le
\int_{\delta \le |\vartheta - \vartheta_0| \le \pi} P_r(\theta - \vartheta)\, d\vartheta < \tau/2
\end{equation}
for all $ r e^{i\theta} \in B_{\delta_0}(e^{i\vartheta_0})$. Then it follows from the definition of $\tau$ and from \eqref{close}, \eqref{far}
that
\begin{eqnarray*}
f(z) &=& \int_{|\vartheta - \vartheta_0| \le \pi} P_r(\theta - \vartheta) f(e^{i\vartheta})\, d\vartheta \\
&=& \int_{|\vartheta - \vartheta_0| < \delta} P_r(\theta - \vartheta) f(e^{i\vartheta})\, d\vartheta +
\int_{\delta \le |\vartheta - \vartheta_0| \le \pi} P_r(\theta - \vartheta) f(e^{i\vartheta})\, d\vartheta , \\
&& z = r e^{i\theta} \in B_{\delta_0}(e^{i\vartheta_0})
\end{eqnarray*}
cannot belong to $B_{\tau/2}(\alpha)$, which contradicts \cite[Ch. II, Section 6, Theorem 6.6]{G06}. Hence every
$\alpha \in \mathbb{T}$ has to belong to $\mathcal{R}(f)(\vartheta_0)$. So,
$\mathbb{T} \subseteq \mathcal{R}(f)(\vartheta_0) \subseteq \mathbb{T}$, i.e. $\mathcal{R}(f)(\vartheta_0) = \mathbb{T}$.
\end{proof}

If $f$ is an infinite Blaschke product, such that the closure of its zero set contains $\mathbb{T}$,
or the singular inner function defined by a singular measure with closed support equal to $\mathbb{T}$,
then the set of its singularities equals $\mathbb{T}$ (see \cite[Ch. II, Section 6, the paragraph after Theorem 6.6]{G06}).

\begin{corollary}\label{c1}
Let $f \in H^\infty(\mathbb{D})$ be an inner function whose set of singularities equals $\mathbb{T}$.
Then $\mathcal{R}(f)(\vartheta) = \mathbb{T}$ for every $e^{i\vartheta} \in \mathbb{T}$.
\end{corollary}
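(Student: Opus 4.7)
The plan is to prove $\mathcal{R}(f)(\vartheta_0) = \mathbb{T}$ via two inclusions. The inclusion $\mathcal{R}(f)(\vartheta_0) \subseteq \mathbb{T}$ is immediate because $|f(e^{i\vartheta})| = 1$ for almost every $\vartheta$ (since $f$ is inner), so no point off the unit circle can belong to the essential range.

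For the harder inclusion $\mathbb{T} \subseteq \mathcal{R}(f)(\vartheta_0)$ I would argue by contradiction. Suppose some $\alpha \in \mathbb{T}$ is absent from $\mathcal{R}(f)(\vartheta_0)$; then, by the very definition of the essential range, there exist $\varepsilon, \delta > 0$ such that $f(e^{i\vartheta}) \in \mathbb{T}\setminus B_\varepsilon(\alpha)$ for almost every $\vartheta \in (\vartheta_0 - \delta, \vartheta_0 + \delta)$. The strategy is to promote this boundary restriction to an interior restriction by means of the Poisson integral, and then invoke a classical cluster-set theorem for inner functions to obtain a contradiction with the hypothesis that $e^{i\vartheta_0}$ is a singularity of $f$.

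Concretely, I would use the representation $f(re^{i\theta}) = \frac{1}{2\pi}\int_{-\pi}^{\pi} P_r(\theta - \vartheta)\, f(e^{i\vartheta})\, d\vartheta$ and split the integral into a local piece over $|\vartheta - \vartheta_0| < \delta$ and a remote piece over $\delta \le |\vartheta - \vartheta_0| \le \pi$. The local piece is a weighted combination, with nonnegative Poisson weights of total mass at most $1$, of boundary values lying in $\mathbb{T}\setminus B_\varepsilon(\alpha)$, so it falls in the closed convex hull of that set, which is a chord-cut closed half-disk bounded away from $\alpha$ by a positive distance $\tau = \tau(\varepsilon)$ (namely the perpendicular distance from $\alpha$ to the chord joining the two points where the circle $\{|\zeta - \alpha| = \varepsilon\}$ meets $\mathbb{T}$). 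Standard mass-concentration properties of the Poisson kernel make the remote piece smaller than $\tau/2$ in modulus once $z$ is sufficiently close to $e^{i\vartheta_0}$, so $f(z)$ avoids $B_{\tau/2}(\alpha)$ throughout some disk neighbourhood of $e^{i\vartheta_0}$ in $\mathbb{D}$. This contradicts the classical fact (Garnett, Ch.~II, Thm.~6.6) that the cluster set of an inner function at a singularity is the entire closed unit disk, completing the proof.

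I expect the main obstacle to be the plane-geometry bookkeeping: identifying the closed convex hull of $\mathbb{T}\setminus B_\varepsilon(\alpha)$, verifying that its distance $\tau$ from $\alpha$ is strictly positive and depends only on $\varepsilon$, and arranging the Poisson tail estimate so that it holds uniformly over a genuine two-dimensional neighbourhood of $e^{i\vartheta_0}$ rather than merely along radial approach. Once these geometric and estimation details are in place, the contradiction with the cluster-set theorem is essentially automatic.
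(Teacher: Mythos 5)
Your proposal is correct and is essentially the paper's own argument: the paper proves exactly this Poisson-integral/convex-hull/cluster-set contradiction as Theorem \ref{t1} (for a single singularity $e^{i\vartheta_0}$) and then obtains Corollary \ref{c1} immediately by applying it at every point of $\mathbb{T}$, which is what you do with the theorem's proof inlined. The geometric details you flag (the chord-cut half-disk $\mathbf{D}(\alpha,\varepsilon)$, the distance $\tau>0$ depending only on $\varepsilon$, and the Poisson tail estimate holding on a two-dimensional ball $B_{\delta_0}(e^{i\vartheta_0})$) are handled in the paper exactly as you anticipate.
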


{\begin{remark}\label{Ru} Corollary \ref{c1} provides an example of a function $w:(-\pi, \pi)\to \CC$ with
$\mathcal{R}(w)(x) = \mathbb{T}$ for every $x \in (-\pi, \pi)$. Let $u(x) := w(x)$,
$x = (x_1, \dots, x_n) \in (-\pi, \pi)^n$. Then
$\mathcal{R}(u)(x) = \mathbb{T}$ for every $x \in (-\pi, \pi)^n$.
\qed
\end{remark}

\begin{remark}
This shows $$L_\infty^1(\Om): = \{u \in  L_\infty(\Om): \sR u(x) = \{u(x)\} \text{ for almost all } x \in \Om\}$$ is  a proper closed linear subspace of $L_\infty(\Om)$ which contains the space of bounded continuous functions on $\Om$. The example in Remark \ref{app3} shows that even in $L_\infty^1(\Om)$ pointwise convergence everywhere is not  sufficient for weak convergence, although it is sufficient for sequences of continuous functions.
\qed  \end{remark}

=========================================================

To see that $\{u_k\}$ is not weakly convergent to 0 let $\{k_j\}$ denote an increasing sequence of natural numbers for which there exists a prime power, $p^m,$ which does not divide $k_j$ for all $j$ ($p=2$ and $m=1$ when all the $k_j$ are all odd, for example). Then, for $J \in \NN$ sufficiently large let
$$
x_J =\left\{\frac{\pi}{p^m} \text{lcm}\,\{k_1,\cdots ,k_J\}\right\}^{-1} \in (0,2\pi),
$$
where $\text{lcm}$ denotes the  least common multiple. Then, since $p^m \nmid  k_j$ and $p$ is prime,
$$
\frac{1}{k_jx_J} = \frac{\text{lcm}\,\{k_1,\cdots ,k_J\}}{p^m k_j}\,\pi \quad\text{ where }\quad \frac{\text{lcm}\,\{k_1,\cdots ,k_J\}}{ k_j}  = r\text{\,mod\,}p^m, \quad r \in \{1,\cdots, p^m-1\},
$$
from which it follows that $|u_{k_j}(x_J)| \geq |\sin (\pi/p^m)| >0$, independent of $J$.
Since, for all $j \in \{1,\cdots, J\}$, $u_{k_j}$ is continuous at $x_J$, it follows that $\|v_J\|_{\Li} \geq |\sin (\pi/p^m)| >0$ for all $J \in \NN$ sufficiently large. Thus, by Theorem \ref{thmIFF}, $u_{k_j} \not\wk 0$ and hence $\{u_k\} $ has no weak limit in $\Li$ in this case.

=========================================================